\documentclass[12pt]{amsart}
\usepackage{amssymb,amsmath,amsthm,mathrsfs,multirow,xcolor,framed,url}
\usepackage[pdftex,
         pdfauthor={Dandan Chen, Rong Chen and Frank Garvan},
         pdftitle={Congruences modulo powers of 5 for the rank parity function},
         pdfsubject={MATHEMATICS},
         pdfkeywords={partition congruences, Dyson's rank, mock theta functions, modular functions},
         pdfproducer={Latex with hyperref},
         pdfcreator={pdflatex}]{hyperref}
\oddsidemargin = 0.0cm
\evensidemargin = 0.0cm
\textwidth = 6.5in
\textheight =8.0in

\newtheorem{theorem}{Theorem}[section]
\newtheorem{lemma}[theorem]{Lemma}
\newtheorem{cor}[theorem]{Corollary}

\theoremstyle{definition}

\theoremstyle{remark}
\newtheorem{remark}[theorem]{Remark}

\numberwithin{equation}{section}

\newcommand\nutwid{\overset {\text{\lower 3pt\hbox{$\sim$}}}\nu}
\newcommand\newaa{\alpha}                                               
\newcommand{\abs}[1]{\lvert#1\rvert}

\newcommand{\SLZ}{\mbox{SL}_2(\mathbb{Z})}       
\newcommand{\uhp}{\mathscr{H}}  







\newcommand\FL[1]{\left\lfloor#1\right\rfloor}
\newcommand\CL[1]{\left\lceil#1\right\rceil}
\newcommand\stroke[2]{{#1}\,\left\arrowvert\,#2\right.}

\newcommand\abcdMAT{\begin{pmatrix} a & b \\ c & d \end{pmatrix}}
\newcommand\GoneMAT{\begin{pmatrix} 1 & * \\ 0 & 1 \end{pmatrix}}
\newcommand\TMAT{\begin{pmatrix} 1 & 1 \\ 0 & 1 \end{pmatrix}}
\newcommand\MAT[4]{\begin{pmatrix} {#1} & {#2} \\ {#3}  & {#4} \end{pmatrix}}
\newcommand\Z{\mathbb{Z}}
\newcommand\C{\mathbb{C}}
\allowdisplaybreaks

\newcommand\omylabel[1]{\label{#1}}

\newcommand\omyeqn[1]{(\ref{eq:#1})}

\newcommand\omycite[1]{}

\newcommand\omylem[1]{\ref{lem:#1}}

\newcommand\omythm[1]{\ref{thm:#1}}
\newcommand\thm[1]{\ref{thm:#1}}
\newcommand\lem[1]{\ref{lem:#1}}
\newcommand\corol[1]{\ref{cor:#1}}

\newcommand\eqn[1]{(\ref{eq:#1})}
\newcommand\sect[1]{\ref{sec:#1}}

\newcommand\subsect[1]{\ref{subsec:#1}}


\newcommand{\beqs}{\begin{equation*}}
\newcommand{\eeqs}{\end{equation*}}
\newcommand{\beq}{\begin{equation}}
\newcommand{\eeq}{\end{equation}}
\renewcommand{\MR}[1]{\href{http://www.ams.org/mathscinet-getitem?mr={#1}}{MR{#1}}}
\DeclareMathOperator{\IM}{Im}
\DeclareMathOperator{\ORD}{Ord}
\DeclareMathOperator{\ord}{ord}
\newcommand\umin[1]{\underset{#1}{\min}}
\newcommand\submin[1]{\underset{\substack{#1}}{\min}}


\begin{document}
\title[Rank parity function congruences]{Congruences modulo powers of $5$ \\
for the rank parity function}

\author{Dandan Chen}
\address{School of Mathematical Sciences, East China Normal University, 
Shanghai, People's Republic of China}
\email{ddchen@stu.ecnu.edu.cn}
\author{Rong Chen}
\address{School of Mathematical Sciences, East China Normal University, 
Shanghai, People's Republic of China}
\email{rchen@stu.ecnu.edu.cn}
\author{Frank Garvan}
\address{Department of Mathematics, University of Florida, Gainesville,
FL 32611-8105}
\email{fgarvan@ufl.edu}
\thanks{The first and second authors were supported in part by the 
National Natural
Science Foundation of China (Grant No. 11971173) and an ECNU
Short-term Overseas Research Scholarship for Graduate Students
(Grant no. 201811280047).
The third author was supported in part by a grant from 
        the Simon's Foundation (\#318714).}

\subjclass[2020]{05A17, 11F30, 11F37, 11P82, 11P83}             

\date{December 24, 2020}                   


\keywords{partition congruences, Dyson's rank, mock theta functions, modular functions}

\begin{abstract}
It is well known that Ramanujan conjectured congruences modulo powers of
$5$, $7$ and and $11$ for the partition function. These were subsequently
proved by Watson (1938) and Atkin (1967). In 2009 Choi, Kang, and
Lovejoy proved congruences modulo powers of $5$ for the crank parity
function. The generating function for rank parity function is $f(q)$,
which is the first example of a mock theta function that Ramanujan mentioned
in his last letter to Hardy.
We prove congruences modulo powers of $5$ for 
the rank parity function.
\end{abstract}

\maketitle

\section{Introduction}
\omylabel{sec:intro}
Let $p(n)$ be the number of unrestricted partitions of $n$. 
Ramanujan discovered and later proved that
\begin{align}
p(5n+4) &\equiv 0 \pmod{5},\omylabel{eq:ram5} \\ 
p(7n+5) &\equiv 0 \pmod{7},\omylabel{eq:ram7} \\ 
p(11n+6) &\equiv 0 \pmod{11}.\omylabel{eq:ram11} 
\end{align}
In 1944 Dyson \cite{Dy44}\omycite{Dy44} 
defined the \textit{rank} of a partition as 
the largest part minus the number of parts and conjectured
that residue of the rank mod $5$ (resp. mod $7$) divides the partitions
of $5n+4$ (resp. $7n+5$) into $5$ (resp. $7$) equal classes thus giving
combinatorial explanations of Ramanujan's partition congruences mod $5$
and $7$. Dyson's rank conjectures were proved by 
Atkin and Swinnerton-Dyer \cite{At-SwD}\omycite{At-SwD}.
Dyson also conjectured the existence of another statistic he called the
\textit{crank} which would likewise explain Ramanujan's partition
congruence mod $11$. The crank was found by Andrews and the third author 
\cite{An-Ga88}\omycite{An-Ga88}
who defined the \textit{crank} as the largest part, if the partition has no 
ones,
and otherwise as the difference between the number of parts larger than the
number of ones and the number of ones.

Let $M_e(n)$ (resp. $M_o(n)$) denote the number of partitions of $n$ with even (resp. odd) crank. Choi, Kang and Lovejoy \cite{Ch-Ka-Lo}\omycite{Ch-Ka-Lo} proved congruences
modulo powers of $5$ for the difference, which we call the
\textit{crank parity function}.

\begin{theorem}[{Choi, Kang and Lovejoy \cite[Theorem 1.1]{Ch-Ka-Lo}\omycite{Ch-Ka-Lo}}]
\omylabel{thm:crankthm}
For all $\newaa\ge0$ we have
$$
M_e(n) - M_o(n) \equiv 0 \pmod{5^{\newaa+1}},\qquad
\mbox{if $24n\equiv 1 \pmod{5^{2\newaa+1}}$}.
$$
\end{theorem}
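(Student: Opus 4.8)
The plan is to adapt the classical Watson--Atkin method for congruences modulo powers of $5$ to the crank parity generating function. First I would record the product identity obtained by setting $z=-1$ in the two-variable crank generating function,
\[
\sum_{n\ge 0}\bigl(M_e(n)-M_o(n)\bigr)q^{n}
= \frac{(q;q)_\infty}{(-q;q)_\infty^{2}}
= \frac{(q;q)_\infty^{3}}{(q^{2};q^{2})_\infty^{2}},
\]
which in terms of the Dedekind eta function is $q^{1/24}\,\eta(\tau)^{3}/\eta(2\tau)^{2}$. After the substitution $q\mapsto q^{24}$ this becomes a weakly holomorphic modular form of weight $1/2$ on a congruence subgroup of level divisible by $2$ and $5$, and the assertion that $M_e(n)-M_o(n)\equiv0\pmod{5^{\newaa+1}}$ whenever $24n\equiv1\pmod{5^{2\newaa+1}}$ becomes a statement about the coefficients of that form along an arithmetic progression of prime-power modulus.

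Next I would set up the inductive machinery, in the spirit of Watson's proof of Ramanujan's congruences $p(5^{\newaa}n+\delta_{\newaa})\equiv0\pmod{5^{\newaa}}$. Write $\Phi$ for the series above, let $U=U_5$ be the Atkin operator $\sum a(n)q^{n}\mapsto\sum a(5n)q^{n}$, conjugated by a suitable power of $q$ so that it preserves a fixed finite-dimensional space of modular functions having a pole only at the cusp $0$, and define a sequence $L_{0},L_{1},L_{2},\dots$ of such functions, where $L_{0}$ is extracted from $\Phi$ and $L_{m+1}$ is obtained from $L_{m}$ by applying $U$ and then multiplying by a fixed eta-quotient; up to the indexing convention, $L_{m}$ encodes the coefficients of $\Phi$ on the progression $24n\equiv1\pmod{5^{m}}$. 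The core task is to show that the relevant modular curve has genus zero, that each $L_{m}$ is a polynomial with integer coefficients in a single Hauptmodul $t$, and that the $L_{m}$ satisfy an explicit linear recursion $L_{m+1}=\mathcal{T}(L_{m})$ for a fixed $\mathbb{Z}$-linear operator $\mathcal{T}$ read off from the $U$-images $U(t^{k})$.

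With this in hand the congruences follow by tracking $5$-adic valuations. One proves, by induction on $m$, a sharp statement of the form $L_{m}=\sum_{k\ge 1}c_{m}(k)\,t^{k}$ with $\ord_{5}\bigl(c_{m}(k)\bigr)\ge\lceil m/2\rceil+\lambda(k)$, or a close variant of this shape, where $\lambda$ is an explicit nondecreasing function that is eventually linear and vanishes for the smallest relevant $k$; the base case $m=1$ is a direct modulo-$5$ dissection of the crank parity function, and the inductive step uses the explicit form of the $U(t^{k})$, which raises this filtration by just enough that a new factor of $5$ is gained only at every \emph{second} application of $U$. This is exactly what turns the hypothesis modulus $5^{2\newaa+1}$ into the weaker power $5^{\newaa+1}$ in the conclusion: reading off the lowest-order coefficient of $L_{2\newaa+1}$ gives $M_e(n)-M_o(n)\equiv0\pmod{5^{\newaa+1}}$ whenever $24n\equiv1\pmod{5^{2\newaa+1}}$, which is Theorem \ref{thm:crankthm}.

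The main obstacle is the middle step: identifying the correct congruence subgroup and Hauptmodul, choosing the auxiliary eta-quotients so that \emph{all} of the $L_{m}$ genuinely remain inside one fixed finite-dimensional space, and computing the $U(t^{k})$ explicitly enough to extract both the recursion $\mathcal{T}$ and the exact growth of $\ord_{5}\bigl(c_{m}(k)\bigr)$. A merely qualitative divisibility bound will not close the induction, so the delicate point is to guess and then verify the correct sharp hypothesis on $\lambda$ and on the coefficient of $m$. By contrast, the mock-modularity that makes the companion rank function $f(q)$ subtle plays no role here, since the specialization $z=-1$ of the crank generating function is an honestly modular object; the entire difficulty is the $5$-adic bookkeeping.
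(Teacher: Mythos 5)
The paper does not actually prove this statement---it is quoted from Choi, Kang and Lovejoy as motivation---but your outline is essentially the approach of that cited proof and of the present paper itself, which runs the identical Watson-style machinery (generating function $(q;q)_\infty^3/(q^2;q^2)_\infty^2$, a Hauptmodul $t$ on a genus-zero curve of level divisible by $10$, a degree-$5$ modular equation for $t$, an alternating $U_5$-iteration $L_{m+1}=U(\cdot\,L_m)$ kept inside $\mathbb{Z}[t,t^{-1}]$, and $5$-adic valuation bookkeeping in which a new factor of $5$ is gained once per two applications of $U$) for the rank analogue in Section 3. Your sketch is correct in structure and correctly locates the substantive work in the explicit initial $U_5(t^k)$ computations and the sharp filtration estimates, which you flag but do not carry out.
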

This gave a weak refinement of Ramanujan's partition congruence modulo
powers of $5$:
$$
p(n) \equiv 0 \pmod{5^a},\qquad
\mbox{if $24n\equiv 1 \pmod{5^\newaa}$}.
$$
This was proved by Watson \cite{Wa38}\omycite{Wa38}.

In this paper we prove an analogue of Theorem \thm{crankthm} for the rank parity function.
Analogous to $M_e(n)$ and $M_o(n)$ we let $N_e(n)$ (resp. $N_o(n)$) denote the number of partitions of $n$ with even 
(resp. odd) rank. It is well known that the difference is related to
Ramanujan's mock theta function $f(q)$. This is the first example of 
a mock theta function that Ramanujan gave in his last letter to 
Hardy. Let
\begin{align*}
  f(q) &= \sum_{n=0}^\infty a_f(n) q^n = 1 + \sum_{n=1}^\infty 
\frac{q^{n^2}}{(1+q)^2(1+q^2)^2 \cdots (1+q^n)^2}\\
       &=
1+q-2\,{q}^{2}+3\,{q}^{3}-3\,{q}^{4}+3\,{q}^{5}-5\,{q}^{6}+7\,{q}^{7}-
6\,{q}^{8}+6\,{q}^{9}-10\,{q}^{10}+12\,{q}^{11}-11\,{q}^{12}+
 \cdots.
\end{align*}
This function has been studied by many authors. Ramanujan conjectured
an asymptotic formula for the coefficients $a_f(n)$. Dragonette 
\cite{Dr52}\omycite{Dr52} improved this result by finding a Rademacher-type asymptotic
expansion for the coefficients. The error term was subsequently improved
by Andrews \cite{An66a}\omycite{An66a}, Bringmann and Ono \cite{Br-On06}\omycite{Br-On06}, and Ahlgren
and Dunn \cite{Ah-Du19}\omycite{Ah-Du19}. We have
$$
a_f(n) = N_e(n) - N_o(n),
$$
for $n\ge0$.

Our main theorem is
\begin{theorem}
\omylabel{thm:mainthm}
For all $\alpha\ge3$ and all $n\ge 0$ we have
\beq
a_f(5^{\alpha}n + \delta_\alpha)
+ a_f(5^{\alpha-2}n + \delta_{\alpha-2})
\equiv 0 \pmod{5^{ \FL{\tfrac{1}{2}\alpha }}},
\omylabel{eq:rmod5}
\eeq
  where $\delta_\alpha$ satisfies $0 < \delta_\alpha < 5^\alpha$ and
$24\delta_\alpha\equiv1\pmod{5^\alpha}$.
\end{theorem}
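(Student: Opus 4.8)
The plan is to follow the classical Watson--Atkin strategy for proving congruences modulo powers of $5$, adapted to the mock-modular setting via the completion of $f(q)$. The key object is the generating function $\sum_n a_f(n)q^n$, which is the holomorphic part of a harmonic Maass form of weight $\tfrac12$. First I would set up the right family of modular functions: working on $\Gamma_0(25)$ (or a suitable group), I would form, for each $\alpha$, the function $L_\alpha(\tau)=\sum_{n\ge0} a_f(5^\alpha n+\delta_\alpha)\,q^{(24n+\text{shift})/\text{denom}}$ dressed with an appropriate eta-quotient so that it becomes a (weakly holomorphic) modular function on $\Gamma_0(5)$ or $\Gamma_0(25)$. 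The passage from $L_\alpha$ to $L_{\alpha+1}$ should be effected by the operator $U_5$ (the operator extracting every fifth coefficient) composed with multiplication by a fixed modular function; the crucial point is that this composite operator acts as a sort of contraction on a finitely generated module over $\mathbb{Z}[[q]]$.

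Second, I would establish the module structure. Let $R$ be the ring generated by a carefully chosen Hauptmodul-type function $t$ for $\Gamma_0(5)$ (something like $t=\eta(5\tau)^6/\eta(\tau)^6$ or its reciprocal), and show that the relevant space of functions is a free $R$-module of small rank with an explicit basis. Then I would compute the matrix of the operator $U := U_5 \circ (\text{multiplication by a fixed }g)$ with respect to that basis, and show that all its entries are polynomials in $t$ with coefficients in $\mathbb{Z}$, divisible by prescribed powers of $5$ in the pattern dictated by the floor function $\lfloor \alpha/2\rfloor$. This is exactly the mechanism that produces the alternating "gain a power of $5$ every two steps" behavior, which matches the exponent $\lfloor\tfrac12\alpha\rfloor$ and also explains why the theorem couples $a_f(5^\alpha n+\delta_\alpha)$ with $a_f(5^{\alpha-2}n+\delta_{\alpha-2})$ rather than giving a clean single-term congruence: the recursion is two-term, reflecting a $2\times 2$ (or block) structure of the $U_5$-action modulo $5$.

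Third, with the matrix in hand, I would run an induction on $\alpha$. The base cases $\alpha=3,4$ would be verified by direct computation with enough coefficients of $f(q)$ (using the explicit $q$-expansion and the known eta-quotient identities for $\Gamma_0(5)$), pinning down $L_3$ and $L_4$ as explicit polynomials in $t$ with the asserted $5$-divisibility. The inductive step applies the operator $U$, uses the divisibility pattern of its matrix entries, and tracks how the $5$-adic valuation of the coefficient vector grows; one must also check that the coupling term $a_f(5^{\alpha-2}n+\delta_{\alpha-2})$ is precisely what is annihilated together with the leading term, i.e.\ that $L_\alpha + (\text{suitable operator})L_{\alpha-2}$ lies in $5^{\lfloor\alpha/2\rfloor}R$. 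Throughout I would use the arithmetic of the Jacobi symbol and the values $\delta_\alpha$ (note $24\delta_\alpha\equiv1\pmod{5^\alpha}$) to keep the cusp conditions and the $q$-exponents integral.

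The main obstacle I anticipate is twofold. First, because $f(q)$ is only mock modular, $L_\alpha$ is not literally a modular function: one must either work with the harmonic Maass completion $\widehat{f}$ and carefully argue that the non-holomorphic Eichler-integral correction term does not interfere with the $U_5$-recursion on the holomorphic part (Bringmann--Ono-style transformation laws), or else find a weakly holomorphic modular surrogate. Second, even granting modularity, verifying that the matrix of $U$ has entries with exactly the claimed $5$-divisibility — and that the claimed two-term combination is the correct invariant subspace — is a delicate explicit computation; getting the eta-quotient normalizations, the choice of Hauptmodul, and the level right so that the divisibilities come out as $\lfloor\alpha/2\rfloor$ rather than something weaker is where essentially all the difficulty lies. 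I expect the bulk of the paper's technical work to be exactly this bookkeeping, analogous to (but harder than) the crank case handled by Choi, Kang and Lovejoy.
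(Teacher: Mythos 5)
Your outline of the second half of the argument---a Hauptmodul $t$, a $U_5$-recursion whose coefficients are Laurent polynomials in $t$ with prescribed $5$-adic valuations, and an induction on $\alpha$---matches the paper's machinery quite closely (the paper works on $\Gamma_0(10)$ and $\Gamma_0(20)$ with $t=\eta(\tau)^2\eta(10\tau)^4/(\eta(2\tau)^4\eta(5\tau)^2)$ and alternating operators $U_A=U_5(A\,\cdot)$, $U_B=U_5(B\,\cdot)$, but that is a matter of normalization). However, there is a genuine gap at the very first step, which you correctly identify as ``the main obstacle'' but do not resolve: since $f(q)$ is only mock modular, the sequence $L_\alpha$ built from the single coefficients $a_f(5^\alpha n+\delta_\alpha)$ is not a modular function, and neither of your two suggested escapes is carried far enough to start the induction. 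The paper's essential new input is the identity
$$
\sum_{n\ge0}\bigl(a_f(5n-1)+a_f(n/5)\bigr)q^n
=\frac{J_2^4J_{10}^2}{J_1J_4^3J_{20}}-4q\,\frac{J_1^2J_4^3J_5J_{20}}{J_2^5J_{10}},
$$
proved via Watson's Appell--Lerch representation of $f(q)$ and Mao's $5$-dissection lemmas: when one $5$-dissects $f(q)$, the mock part reappears as $-q^{-1}f(q^{25})$, so the specific combination $a_f(5n-1)+a_f(n/5)$ is exactly the ``weakly holomorphic modular surrogate'' you hoped for. This also means your explanation of the two-term coupling is not right: it does not come from a $2\times2$ block structure of $U_5$ modulo $5$. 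Once one sets $c_f(n)=a_f(5n-1)+a_f(n/5)$, the recursion is a clean single-sequence iteration $L_{2\alpha+1}=U_A(L_{2\alpha})$, $L_{2\alpha+2}=U_B(L_{2\alpha+1})$ gaining a factor of $5$ every two steps; the two terms in the statement of the theorem arise solely from unpacking $c_f$ at the argument $5^{\alpha-1}n+\lambda_{\alpha-1}$. Without the generating-function identity, the rest of your program cannot be launched, so the proposal as written does not constitute a proof.
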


Below in Section \subsect{genfunc} we show that the generating function for
$$
a_f(5n-1) + a_f(n/5),
$$
is a linear combination of two eta-products. See Theorem \omythm{af5thm}. This enables us to
use the theory of modular functions to obtain congruences. Our presentation and method
is similar to that Paule and Radu \cite{Pa-Ra12}\omycite{Pa-Ra12}, who solved a
difficult conjecture of Sellers \cite{Se1994} for congruences modulo powers of $5$
for Andrews's two-colored generalized Frobenious partitions \cite{An1984mem}.
In Section \sect{modfuncs} we include the necessary background and algorithms from the theory
of modular functions for proving identities. In Section \sect{rankparity5} we apply the
theory of modular functions to prove our main theorem. In Section \sect{further} we conclude
the paper by discussing congruences modulo powers of $7$ for both the rank and crank
parity functions.

\subsection*{Some Remarks and Notation}
\omylabel{subsec:notation}
Throughout this paper we use the standard $q$-notation.
For finite products we use
$$
(z;q)_n=(z)_n=
\begin{cases}
{\displaystyle\prod_{j=0}^{n-1}(1-zq^j)}, & n>0 \\
1,                         & n=0.
\end{cases}
$$
For infinite products we use
$$
(z;q)_\infty=(z)_\infty = \lim_{n\to\infty} (z;q)_n
=\prod_{n=1}^\infty (1-z q^{(n-1)}),
$$
$$
(z_1,z_2,\dots,z_k;q)_\infty = (z_1;q)_\infty (z_2;q)_\infty
\cdots (z_k;q)_\infty,
$$
$$
[z;q]_\infty = (z;q)_\infty (z^{-1}q;q)_\infty=
\prod_{n=1}^\infty (1-z q^{(n-1)})(1-z^{-1}q^{n}),
$$
$$
[z_1,z_2,\dots,z_k;q]_\infty = [z_1;q]_\infty [z_2;q]_\infty
\cdots [z_k;q]_\infty,
$$
for $\abs{q}<1$ and $z$, $z_1$, $z_2$,\dots, $z_k\ne 0$.
For $\theta$-products we use
\beqs
J_{a,b}=(q^a,q^{b-a},q^b;q^b)_\infty,\quad\mbox{and}\quad
J_b=(q^b;q^b)_\infty,
\eeqs            
and as usual 
\beq
\eta(\tau) = \exp(\pi i\tau/12) \prod_{n=1}^\infty (1 - \exp(2\pi in\tau)
= q^{1/24} \prod_{n=1}^\infty (1- q^n),
\omylabel{eq:etadef}
\eeq
where $\IM(\tau)>0$.

Throughout this paper we let $\lfloor x \rfloor$ denote the largest
integer less and or equal to $x$, and let $\lceil x \rceil$
denote the smallest integer greater than or equal to $x$.

     We need some notation for formal Laurent series. 
See the remarks at the end of \cite[Section 1, p.823]{Pa-Ra12}\omycite{Pa-Ra12}.  
Let $R$ be a ring and $q$ be an indeterminant. We let $R((q))$ denote the
formal Laurent series in $q$ with coefficients in $R$. These are
series of the form
$$
f = \sum_{n\in\mathbb{Z}} a_n \, q^n,
$$ 
where $a_n \ne 0$ for at most finitely many $n < 0$. For $f\ne0$
we define the order of $f$ (with respect to $q$) as the smallest 
integer
$N$ such that $a_N\ne0$ and write $N=\ord_q(f)$. We note that if $f$
is a modular function this coincides with $\ord(f,\infty)$.
See equation \omyeqn{ordfz} below for this other notation.
Suppose $t$ and $f\in R((q)$ and the composition $f\circ t$ is well-defined
as a formal Laurent series. This is the case if $\ord_q(t)>0$. 
The $t$-order of
$$
F = f \circ t = \sum_{n\in\mathbb{Z}} a_n \, t^n,
$$
where $t = \sum_{n\in\mathbb{Z}} b_n \, q^n$, is defined to be the
smallest integer $N$ such that $a_N\ne0$ and write $N=\ord_t(F)$.
For example, if
$$
f = {q}^{-1} + 1 + 2\,q + \cdots, \qquad
t = q^2 + 3q^3 + 5q^4 + \cdots, 
$$
then
$$
F = f \circ t 
 = {t}^{-1} + 1 + 2\,t + \cdots, \qquad
 = q^{-2} - 3{q}^{-1} + 5 + \cdots,
$$
so that
$\ord_q(f) = -1$, $\ord_q(t)=2$, $\ord_t(F)=-1$ and $\ord_q(F) = -2$.

\section{Modular Functions}
\omylabel{sec:modfuncs}

In this section we present the needed theory of modular functions
which we use to prove identities. A general reference is Rankin's book
\cite{Ra}\omycite{Ra}.

\subsection{Background theory}
\omylabel{subsec:bthy}
Our presentation is based on \cite[pp.326-329]{Be-RNIII}\omycite{Be-RNIII}.
Let $\uhp = \{\tau\,:\,\IM(\tau)>0\}$ (the complex upper half-plane).
For each $M=\abcdMAT \in M_2^{+}(\mathbb{Z})$, the set of integer
$2\times 2$ matrix with positive determinant, the bilinear
transformation $M(\tau)$ is defined by
$$
M\tau = M(\tau) = \frac{a\tau +b}{c\tau +d}.
$$
The stroke operator is defined by
$$
\left(\stroke{f}{M}\right)(\tau) = f(M\tau),
$$
and satisfies
$$
\stroke{f}{MS} = \stroke{\stroke{f}{M}}{S}.
$$
The modular group $\Gamma(1)$ is defined by
$$
\Gamma(1) = \left\{\abcdMAT\in M_2^{+}(\mathbb{Z})\,:\, ad -bc=1\right\}.
$$
We consider the following subgroups $\Gamma$ of the modular group
with finite index
$$
\Gamma_0(N) =
\left\{\abcdMAT\in\Gamma(1)\,:\, c\equiv 0 \pmod{N}\right\},
$$
$$
\Gamma_1(N) =
\left\{\abcdMAT\in\Gamma(1)\,:\, \abcdMAT\equiv\GoneMAT\pmod{N} \right\},
$$
Such a group $\Gamma$ acts on
$\uhp \cup \mathbb{Q} \cup {\infty}$ by the transformation $V(\tau)$,
for $V\in\Gamma$ which induces an equivalence relation. We
call a set $\mathscr{F}\subseteq \uhp \cup \mathbb{Q} \cup \{\infty\}$
a \textit{fundamental set} for $\Gamma$ if it contains one element of
each equivalence class. The finite set $\mathcal{F} \cap \left(\mathbb{Q}
\cup \{\infty\}\right)$ is called the \textit{complete set of inequivalent cusps}.

A function $f\,:\,\mathscr{H} \longrightarrow \mathbb{C}$ is
a \textit{modular function} on $\Gamma$ if the following conditions
hold:
\begin{enumerate}
\item[(i)] $f$ is holomorphic on $\uhp$.
\item[(ii)] $\displaystyle \stroke{f}{V} = f$ for all $V\in\Gamma$.
\item[(iii)] For every $A\in\Gamma(1)$ the function $\stroke{f}{A^{-1}}$
has an expansion
$$
(\stroke{f}{A^{-1}})(\tau) = \sum_{m=m_0}^\infty b(m) \exp(2\pi i\tau m/\kappa)
$$
on some half-plane $\left\{\tau\,:\,\IM \tau > h \ge 0\right\}$,
where $T=\TMAT$ and
$$
\kappa = \min \left\{k>0\,:\, \pm A^{-1} T^k A \in \Gamma\right\}.
$$
\end{enumerate}
The positive integer $\kappa = \kappa(\Gamma;\zeta)$
is called the \textit{fan width} of
$\Gamma$ at the cusp $\zeta = A^{-1}\infty$.
If $b(m_0)\ne 0$, then we write
$$
\ORD(f,\zeta,\Gamma) = m_0
$$
which is called the \textit{order} of $f$ at $\zeta$ with respect to
$\Gamma$. We also write
\beq
\ord(f;\zeta) = \frac{m_0}{\kappa} = \frac{m_0}{\kappa(\Gamma,\zeta)},
\omylabel{eq:ordfz}
\eeq
which is called the \textit{invariant order} of $f$ at $\zeta$.
For each $z\in\uhp$, $\ord(f;z)$ denotes the order of
$f$ at $z$ as an analytic function of $z$, and the order of $f$ with
respect to $\Gamma$ is defined by
$$
\ORD(f,z,\Gamma) = \frac{1}{\ell} \ord(f;z)
$$
where $\ell$ is the order of $z$ as a fixed point of $\Gamma$.
We note $\ell =1$, $2$ or $3$.
Our main tool for proving modular function identities
is the valence formula \cite[Theorem 4.1.4, p.98]{Ra}\omycite{Ra}.
If $f\ne0$ is a modular function on $\Gamma$ and $\mathscr{F}$ is any
fundamental set for $\Gamma$ then
\beq
\sum_{z\in\mathscr{F}} \ORD(f,z,\Gamma) = 0.
\omylabel{eq:valform}
\eeq

\subsection{Eta-product identities}
\omylabel{subsec:etaprods}
We will consider eta-products of the form
\begin{equation}
f(\tau) = \prod_{d\mid N} \eta(d\tau)^{m_{d}},
\omylabel{eq:etapdef}
\end{equation}
where $N$ is a positive integer,  each $d>0$ and $m_{d}\in\Z$.

\subsubsection*{Modularity}
Newman \cite{Ne59}\omycite{Ne59} has found necessary and sufficient conditions
under which an eta-product is a modular function on $\Gamma_0(N)$.
\begin{theorem}[{\cite[Theorem 4.7]{Ne59}\omycite{Ne59}}]
\omylabel{thm:etamodthm}
The function $f(\tau)$ (given in \omyeqn{etapdef}) is a modular function
on $\Gamma_0(N)$ if and only if
\begin{enumerate}
\item
$\displaystyle\sum_{d\mid N} m_d = 0$,
\item
$\displaystyle\sum_{d\mid N} d m_d \equiv0\pmod{24}$,
\item
$\displaystyle\sum_{d\mid N} \frac{N m_d}{d} \equiv0\pmod{24}$, and
\item
$\displaystyle\prod_{d\mid N} d^{|m_d|}$ is a square.
\end{enumerate}
\end{theorem}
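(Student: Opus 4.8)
The plan is to verify, for the eta-product $f$ of \omyeqn{etapdef}, the three defining conditions (i)--(iii) of a modular function on $\Gamma_0(N)$, tracing how each of them reduces to the stated arithmetic conditions. Condition~(i) is immediate: $\eta$ is holomorphic and non-vanishing on $\uhp$, hence so is every $\eta(d\tau)$, and therefore so is $f$. For condition~(iii) one invokes the classical transformation formula for Dedekind's $\eta$ under $\SLZ$, together with its explicit multiplier system $\nu_\eta$: for every $A\in\SLZ$ the function $\stroke{\eta}{A}$ is a $24$-th root of unity times $(c\tau+d)^{1/2}$ times $\eta(\tau)$, so $\stroke{f}{A^{-1}}$ differs from another eta-product, in the local variable at the cusp $A^{-1}\infty$, only by a constant and the factor $(c\tau+d)^{\frac12\sum_{d\mid N}m_d}$; condition~(1), $\sum_{d\mid N}m_d=0$, removes that factor, and an eta-product always has a Fourier expansion in a rational power of $q$ with exponents bounded below (indeed a standard computation evaluates the order of $f$ at the cusp $a/c$ --- written in lowest terms --- as a fixed positive multiple of $\sum_{d\mid N}\gcd(c,d)^2 m_d/d$, hence finite).

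The heart of the argument is condition~(ii), invariance under $\Gamma_0(N)$. Fix $\gamma=\MAT{a}{b}{c}{\delta}\in\Gamma_0(N)$; then $N\mid c$, so $d\mid c$ for every $d\mid N$, and we have the matrix factorisation $\MAT{d}{0}{0}{1}\gamma=\gamma_d\MAT{d}{0}{0}{1}$ with $\gamma_d=\MAT{a}{bd}{c/d}{\delta}\in\SLZ$. Applying the $\eta$-transformation formula to $\gamma_d$ at the point $d\tau$ gives $\eta(d\gamma\tau)=\nu_\eta(\gamma_d)\,(c\tau+\delta)^{1/2}\,\eta(d\tau)$, and multiplying over $d\mid N$ with exponents $m_d$ yields
\[
f(\gamma\tau)=\Bigl(\prod_{d\mid N}\nu_\eta(\gamma_d)^{m_d}\Bigr)\,(c\tau+\delta)^{\frac12\sum_{d\mid N}m_d}\,f(\tau).
\]
By condition~(1) the power of $(c\tau+\delta)$ is trivial, and the square-root branch and sign ambiguities --- all raised to the power $\tfrac12\sum_{d\mid N}m_d=0$ --- cancel as well; hence $\stroke{f}{\gamma}=f$ for every $\gamma\in\Gamma_0(N)$ if and only if the multiplier product $P(\gamma):=\prod_{d\mid N}\nu_\eta(\gamma_d)^{m_d}$ equals $1$ for every $\gamma\in\Gamma_0(N)$.

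The decisive step is to evaluate $P(\gamma)$, which I would do by substituting the standard closed form for $\nu_\eta(\gamma_d)$: a Jacobi symbol involving $c/d$ and $\delta$, times $\exp$ of a $\tfrac{\pi i}{12}$-multiple of $a+\delta$, $bc$, and the Dedekind sum $s(\delta,c/d)$. Collecting exponents, the Jacobi-symbol factors combine into a single symbol (with lower entry $\delta$, or $c/d$, according to parity) which, after quadratic reciprocity and the relation $a\delta\equiv1\pmod c$, is $\equiv1$ for every admissible $\gamma$ precisely when $\prod_{d\mid N}d^{|m_d|}$ is a perfect square --- condition~(4). For the exponential part, Dedekind reciprocity turns each $s(\delta,c/d)$ into elementary rational terms, and after summing against the $m_d$ the total phase collapses to a $\tfrac{\pi i}{12}$-combination of $\sum_{d\mid N}d\,m_d$ and $\sum_{d\mid N}(N/d)\,m_d$, which vanishes identically on $\Gamma_0(N)$ exactly when both of these sums are $\equiv0\pmod{24}$ --- conditions~(2) and (3). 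Two quick checks pin these down and explain their symmetry: evaluating $P$ at $\MAT{1}{1}{0}{1}$ gives $\exp\bigl(\tfrac{\pi i}{12}\sum_{d\mid N}d\,m_d\bigr)$, forcing~(2); evaluating $P$ at $\MAT{1}{0}{N}{1}$ gives $\exp\bigl(-\tfrac{\pi i}{12}\sum_{d\mid N}(N/d)\,m_d\bigr)$, forcing~(3); and (3) is nothing but (2) applied to the Fricke-conjugate eta-product, since $W_N=\MAT{0}{-1}{N}{0}$ normalises $\Gamma_0(N)$ and interchanges $\eta(d\tau)$ with $\eta((N/d)\tau)$, that is, $m_d$ with $m_{N/d}$.

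For necessity, condition~(1) is already forced by~(iii) (a nonzero weight destroys the periodicity required there), while for (2), (3), (4) one runs the above evaluations in reverse: $\MAT{1}{1}{0}{1}$ and $\MAT{1}{0}{N}{1}$ force (2) and (3), and --- granted (1)--(3), so that the exponential part of $P$ is trivial --- a non-square value of $\prod_{d\mid N}d^{|m_d|}$ is detected by the residual Jacobi symbol on a suitably chosen $\gamma\in\Gamma_0(N)$, giving $P(\gamma)=-1$. Sufficiency is precisely the assertion that (1)--(4) force $P(\gamma)=1$ for every $\gamma\in\Gamma_0(N)$, which the closed-form computation above delivers. The main obstacle is exactly that computation: keeping the Dedekind-sum reciprocity terms and the Jacobi-symbol and quadratic-reciprocity bookkeeping under sufficient control to see that the resulting vanishing conditions are \emph{exactly} (2), (3) and (4), with no slack in either direction. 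Everything else --- conditions~(i) and (iii), and the reduction of (ii) to the identity $P\equiv1$ --- is routine once the classical $\eta$-transformation formula is in hand.
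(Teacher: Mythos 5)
This statement is quoted verbatim from Newman \cite{Ne59} and the paper supplies no proof of it, so there is no internal argument to compare yours against; I can only judge your sketch on its own terms. As an outline of the classical proof it is essentially correct and is the standard route (the one Newman and Ligozat themselves take): holomorphy on $\uhp$ is immediate, meromorphy at the cusps follows from the $\eta$-transformation law once condition (1) kills the automorphy weight, and invariance under $\Gamma_0(N)$ reduces, via the factorisation $\MAT{d}{0}{0}{1}\gamma=\gamma_d\MAT{d}{0}{0}{1}$, to the vanishing of the multiplier product $P(\gamma)=\prod_{d\mid N}\nu_\eta(\gamma_d)^{m_d}$. Your matching of the conditions to their sources is right: $T$ and $\MAT{1}{0}{N}{1}$ force (2) and (3), the Jacobi-symbol part of the multiplier accounts for (4), and the Fricke-involution symmetry between (2) and (3) is a genuine and correct observation.

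The one thing to be honest about is that the entire technical content of the theorem lives in the step you defer: showing that, \emph{for every} $\gamma\in\Gamma_0(N)$, the Dedekind-sum contributions collapse (after reciprocity) to a phase depending only on $\sum_d d\,m_d$ and $\sum_d (N/d)\,m_d$ modulo $24$, with the leftover being exactly the Jacobi symbol governed by (4) --- and, for necessity of (4), exhibiting a $\gamma$ (via Dirichlet's theorem on primes in progressions, say) on which that symbol is $-1$. You flag this as the main obstacle rather than carrying it out, so as written this is a correct plan rather than a complete proof; but the plan would go through, and for a result the paper itself only cites, that is a reasonable standard to hold it to.
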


\subsubsection*{Orders at cusps}
Ligozat \cite{Li75}\omycite{Li75} has computed the invariant order of an eta-product
 at the cusps of $\Gamma_0(N)$.
\begin{theorem}[{\cite[Theorem 4.8]{Li75}\omycite{Li75}}]
\omylabel{thm:ordthm}
If the eta-product $f(\tau)$ (given in \omyeqn{etapdef})   is a modular function
on $\Gamma_0(N)$, then its order at the cusp $\zeta=\frac{b}{c}$
(assuming $(b,c)=1$) is
\begin{equation}
\ord(f(\tau);\zeta)=\sum_{d\mid N} \frac{(d,c)^2 m_d}{24d}.
\label{eq:ecord}
\end{equation}
\end{theorem}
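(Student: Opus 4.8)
The plan is to unwind the definition of the invariant order at a cusp and reduce everything to the classical transformation law of the Dedekind eta function. Fix the cusp $\zeta=b/c$ with $(b,c)=1$ and choose $\gamma=\MAT{b}{\beta}{c}{\delta}\in\SLZ$, which is possible precisely because $(b,c)=1$; then $\gamma(\infty)=\zeta$. By condition (iii) in the definition of a modular function together with \eqn{ordfz}, applied with $A^{-1}=\gamma$, the quantity $\ord(f(\tau);\zeta)$ is nothing but the order of $f(\gamma\tau)$ as a (possibly fractional) power series in $q=\exp(2\pi i\tau)$ as $\tau\to i\infty$. So it suffices to determine the leading power of $q$ in $f(\gamma\tau)=\prod_{d\mid N}\eta(d\,\gamma\tau)^{m_d}$.

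For each $d\mid N$, note that $d\,\gamma\tau$ is the M\"obius image of $\tau$ under $M_d:=\MAT{d}{0}{0}{1}\gamma=\MAT{db}{d\beta}{c}{\delta}$, a matrix of determinant $d$. Since $(b,c)=1$ one checks $\gcd(db,c)=(d,c)=:g_d$, so row reduction over $\Z$ produces $L\in\SLZ$ and an integer $h$ with $LM_d=\MAT{g_d}{h}{0}{d/g_d}$; equivalently
$$
d\,\gamma\tau \;=\; L^{-1}\!\left(\frac{g_d^2\,\tau+g_d h}{d}\right).
$$
Applying the eta transformation law to $L^{-1}\in\SLZ$ (which multiplies $\eta$ by a $24$-th root of unity times an automorphy factor $(\ast\,\tau'+\ast)^{1/2}$ that is holomorphic, non-vanishing, and grows at most like $\tau^{1/2}$), and then expanding $\eta(w)=\exp(\pi i w/12)\prod_{n\ge1}(1-\exp(2\pi inw))$ at $w=(g_d^2\tau+g_d h)/d\to i\infty$, one finds that $\eta(d\,\gamma\tau)$ equals a non-vanishing, polynomially bounded factor times $q^{\,g_d^2/(24d)}\bigl(1+O(q^{\,g_d^2/d})\bigr)$. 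Multiplying over $d\mid N$, the leading power of $q$ in $f(\gamma\tau)$ is $\sum_{d\mid N}\frac{g_d^2\,m_d}{24d}=\sum_{d\mid N}\frac{(d,c)^2 m_d}{24d}$, since the polynomially bounded prefactors contribute no power of $q$; because $f$ is assumed to be a modular function on $\Gamma_0(N)$ the prefactors must in fact cancel altogether (this also follows from $\sum_{d\mid N}m_d=0$, which is part of the modularity criterion of Theorem~\thm{etamodthm}). This is the claimed formula for $\ord(f(\tau);\zeta)$.

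I expect the only delicate points to be bookkeeping rather than anything deep: verifying $\gcd(db,c)=(d,c)$ from $(b,c)=1$, performing the Hermite-style row reduction $M_d=L^{-1}\MAT{g_d}{h}{0}{d/g_d}$ cleanly, and tracking the eta multiplier so as to be certain that the $24$-th roots of unity and the automorphy factors are irrelevant to the $q$-order. The one genuinely conceptual step — and the place to be most careful — is the identification of the invariant order $\ord(f;\zeta)=m_0/\kappa$ of \eqn{ordfz} with the $q$-order of $f(\gamma\tau)$: once this is set up correctly the fan width $\kappa$ drops out entirely and never needs to be computed, which is exactly what makes the closed form $\sum_{d\mid N}(d,c)^2 m_d/(24d)$ meaningful.
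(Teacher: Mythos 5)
The paper offers no proof of this statement: it is quoted directly from Ligozat \cite{Li75}, so there is no in-paper argument to compare against, and your write-up must stand on its own. It does. The identification of the invariant order $\ord(f;\zeta)=m_0/\kappa$ with the leading exponent of $q=\exp(2\pi i\tau)$ in $f(\gamma\tau)$ for $\gamma=\MAT{b}{\beta}{c}{\delta}\in\SLZ$ is exactly what the definition \eqn{ordfz} gives (the fan width cancels, as you note); the computation $\gcd(db,c)=(d,c)$ from $(b,c)=1$ is correct; the reduction $M_d=L^{-1}\MAT{g_d}{h}{0}{d/g_d}$ with $L\in\SLZ$ is the standard Hermite step (take the first row of $L$ from a B\'ezout relation for the first column of $M_d$); and extracting $q^{g_d^2/(24d)}$ from $\eta$ at $w=(g_d^2\tau+g_dh)/d$ via \eqn{etadef} is routine. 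This is the classical proof. One small imprecision: the automorphy factors need not ``cancel altogether.'' The product $\prod_{d\mid N}(c_d'w_d+d_d')^{m_d/2}$ is in general a nonconstant ratio of linear functions of $\tau$; what matters is that, since $\sum_{d\mid N}m_d=0$ (Theorem \thm{etamodthm}(1)), it tends to a finite nonzero limit as $\tau\to i\infty$, and since $\stroke{f}{\gamma}$ is known a priori to admit an expansion in powers of $\exp(2\pi i\tau/\kappa)$, a bounded nonvanishing prefactor cannot shift the leading exponent. With that phrasing tightened, the argument is complete.
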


Chua and Lang \cite{Ch-La04}\omycite{Ch-La04}
have found a set of inequivalent
cusps for $\Gamma_0(N)$.
\begin{theorem}[{\cite[p.354]{Ch-La04}\omycite{Ch-La04}}]
\omylabel{thm:chualang}
Let N be a positive integer and for each positive divisor $d$ of $N$ let
$e_d = (d,N/d)$. Then the set
\beqs
\Delta = {\underset{d\mid N}{\cup}} \, S_d
\eeqs
is a complete set of inequivalent cusps of $\Gamma_0(N)$ where
$$
S_d = \{ x_i/d\,:\,(x_i,d)=1,\quad 0\le x_i\le d-1,\quad x_i\not\equiv
x_j \pmod{e_d}\}.
$$
\end{theorem}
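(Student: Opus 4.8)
The plan is to identify the cusps of $\Gamma_0(N)$ with the orbits of $\Gamma_0(N)$ acting on $\mathbb{P}^1(\mathbb{Q})=\mathbb{Q}\cup\{\infty\}$, each orbit represented by a fraction $a/c$ in lowest terms (with $\infty=1/0$). The heart of the argument is an equivalence criterion: two fractions $a_1/c_1$ and $a_2/c_2$ in lowest terms are $\Gamma_0(N)$-equivalent if and only if $\gcd(c_1,N)=\gcd(c_2,N)=:e$ and there is an integer $s$ with $\gcd(s,N)=1$ satisfying $c_2\equiv s\,c_1\pmod N$ and $a_2\equiv s^{-1}a_1\pmod e$. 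Once this is in hand the theorem follows by first reducing every cusp to one whose denominator divides $N$, and then classifying those with a fixed denominator $d\mid N$.

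For the necessity direction I would take $\gamma=\MAT{a}{b}{c}{d}\in\Gamma_0(N)$ (so $N\mid c$ and $ad-bc=1$) and compute $\gamma(a_1/c_1)=\tfrac{aa_1+bc_1}{ca_1+dc_1}$. Reducing the new denominator modulo $N$ gives $c_2\equiv d\,c_1\pmod N$, and since $ad\equiv1\pmod N$ the entry $d$ is a unit modulo $N$; this already forces $\gcd(c_2,N)=\gcd(c_1,N)$, so $e$ is a $\Gamma_0(N)$-invariant. Reducing the new numerator modulo $e$ (using $e\mid c_1$) gives $a_2\equiv a\,a_1\pmod e$ with $a\equiv d^{-1}\pmod N$, which is the second congruence with $s=d$.

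The sufficiency direction is where the real work lies. I would complete each coprime column $\binom{a_i}{c_i}$ to a matrix $\gamma_i=\MAT{a_i}{b_i}{c_i}{d_i}\in\SLZ$, so that $\gamma_i\infty=a_i/c_i$; then $a_1/c_1$ and $a_2/c_2$ are $\Gamma_0(N)$-equivalent precisely when some $\gamma_2\,\MAT{1}{n}{0}{1}\,\gamma_1^{-1}$ lies in $\Gamma_0(N)$, that is, when its lower-left entry $c_2d_1-c_1d_2-n\,c_1c_2$ is divisible by $N$ for some integer $n$. The main obstacle is to show this congruence in $n$ is solvable, i.e. that $\gcd(c_1c_2,N)$ divides $c_2d_1-c_1d_2$; this is exactly where the hypotheses $c_2\equiv s\,c_1\pmod N$ and $a_2\equiv s^{-1}a_1\pmod e$ are used, together with the freedom to adjust the completions $b_i,d_i$, and it requires a careful $\gcd$-bookkeeping argument that reduces the obstruction to the coprime modulus $N/e$.

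Finally I would assemble the theorem. Every cusp $a/c$ is equivalent to one with denominator $d=\gcd(c,N)\mid N$: applying sufficiency, one solves $s\,c\equiv d\pmod N$ for some $s$ coprime to $N$, which is possible because $\gcd(c/d,N/d)=1$ after a routine CRT adjustment on the primes dividing $d$ but not $N/d$. For a fixed $d\mid N$ we have $e=d$, and the criterion collapses to the statement that $a_1/d\sim a_2/d$ iff $a_2\equiv s^{-1}a_1\pmod d$ for a unit $s\equiv1\pmod{N/d}$; reducing modulo $e_d=\gcd(d,N/d)$ and noting $s^{-1}\equiv1\pmod{e_d}$ shows this is equivalent to $a_1\equiv a_2\pmod{e_d}$. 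Hence the inequivalent cusps with denominator $d$ are parametrized by the $\phi(e_d)$ unit residues modulo $e_d$, which is exactly the set $S_d$. Since equivalence preserves $\gcd(\cdot,N)$, distinct divisors $d$ yield inequivalent cusps, so $\Delta=\bigcup_{d\mid N}S_d$ is a complete set of inequivalent cusps; as a consistency check, $\lvert\Delta\rvert=\sum_{d\mid N}\phi(\gcd(d,N/d))$ recovers the known cusp count for $\Gamma_0(N)$.
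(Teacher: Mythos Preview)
The paper does not supply a proof of this theorem at all: it is quoted verbatim from Chua and Lang \cite{Ch-La04} (with a page reference) and used as a black box, so there is no ``paper's own proof'' to compare your proposal against.

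Your outline is the standard route to this classification (essentially the argument in Shimura or Diamond--Shurman). The necessity direction and the final assembly are correct as written. In particular, your reduction from the criterion ``$s\equiv 1\pmod{N/d}$ and $a_2\equiv s^{-1}a_1\pmod d$'' to ``$a_1\equiv a_2\pmod{e_d}$'' is right: one direction is immediate since $e_d\mid N/d$; for the converse you solve the simultaneous congruences $s\equiv a_1a_2^{-1}\pmod d$ and $s\equiv 1\pmod{N/d}$ via CRT, the compatibility condition being exactly $a_1\equiv a_2\pmod{e_d}$, and the resulting $s$ is automatically a unit mod $N$ since it is a unit mod both $d$ and $N/d$.

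The only place you leave a genuine gap is the sufficiency half of the equivalence criterion itself: you correctly identify that one must show $\gcd(c_1c_2,N)\mid c_2d_1-c_1d_2$ (after possibly adjusting the completions), and you describe this as ``careful $\gcd$-bookkeeping,'' but you do not carry it out. That step is not entirely mechanical; if you intend this as a self-contained proof you should either fill it in or cite a reference where it is done.
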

Biagioli \cite{Bi89}\omycite{Bi89} has found the fan width of the cusps of
$\Gamma_0(N)$.
\begin{lemma}[{\cite[Lemma 4.2]{Bi89}\omycite{Bi89}}]
\omylabel{lem:fanw}
If $(r,s)=1$, then the fan width of $\Gamma_0(N)$ at $\frac{r}{s}$
is
$$
\kappa\left(\Gamma_0(N); \frac{r}{s}\right) = \frac{N}{(N,s^2)}.
$$
\end{lemma}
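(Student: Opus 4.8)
The plan is to unwind the definition of the fan width $\kappa(\Gamma;\zeta)=\min\{k>0:\pm A^{-1}T^kA\in\Gamma\}$ recalled above and reduce it to an elementary divisibility question. Since $(r,s)=1$, I would first pick integers $b,d$ with $rd-bs=1$ and set $A^{-1}=\MAT{r}{b}{s}{d}\in\Gamma(1)$, so that $A^{-1}\infty=r/s$ and $A=\MAT{d}{-b}{-s}{r}$; thus $r/s$ is realized as the cusp $A^{-1}\infty$ in the notation of condition~(iii).

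Next I would carry out the explicit conjugation. With $T=\TMAT$, a direct matrix multiplication gives
\[
A^{-1}T^kA=\MAT{1-rsk}{r^2k}{-s^2k}{1+rsk},
\]
which has determinant $1$ and integer entries for every $k\in\Z$, hence lies in $\Gamma(1)$ for all $k$. Therefore the membership $\pm A^{-1}T^kA\in\Gamma_0(N)$ is governed solely by its lower-left entry: it holds precisely when $N\mid s^2k$, the sign being irrelevant (which is exactly why the definition carries the factor $\pm$). The least positive $k$ with $N\mid s^2k$ is $k=N/(N,s^2)$, which is the asserted value. The degenerate case $s=0$, $\zeta=\infty$, is included automatically: then $(N,s^2)=N$ and $\kappa=1$, as it must be for $\Gamma_0(N)$.

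Then I would check that the answer does not depend on the auxiliary choice of $b,d$. If $A_1,A_2\in\Gamma(1)$ both send $\infty$ to $r/s$, then $A_1A_2^{-1}$ fixes $\infty$, so $A_1A_2^{-1}=\pm T^m$ for some $m\in\Z$; since $T^m$ commutes with $T^k$, the conjugates $A_1^{-1}T^kA_1$ and $A_2^{-1}T^kA_2$ agree up to an overall sign, and hence the minimal admissible $k$ is the same for both choices.

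I do not expect a genuine obstacle here: the whole argument is a single matrix computation followed by a gcd minimization. The only points that need care are the bookkeeping of the $\pm$ sign in the definition of the fan width --- this is precisely what makes the sign of the lower-left entry harmless --- and the well-definedness check in the previous paragraph; neither is deep.
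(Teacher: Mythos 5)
Your proof is correct. The paper itself gives no argument here: Lemma \ref{lem:fanw} is simply quoted from Biagioli \cite{Bi89}, so there is nothing internal to compare against; your computation is the standard (and essentially the only) proof of this fact, and it meshes exactly with the definition of fan width given in Section \ref{subsec:bthy}. The matrix identity
$$
A^{-1}T^kA=\MAT{1-rsk}{r^2k}{-s^2k}{1+rsk}
$$
is right, the reduction of membership in $\Gamma_0(N)$ to $N\mid s^2k$ is right, and the minimization gives $N/(N,s^2)$ as claimed, with the cusp $\infty$ (the case $s=0$) handled correctly.

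One cosmetic slip in the well-definedness paragraph: if $A_1$ and $A_2$ both send $\infty$ to $r/s$, the product that fixes $\infty$ is $A_2^{-1}A_1$, not $A_1A_2^{-1}$ (the latter fixes $r/s$). The conclusion you draw is unaffected: writing $A_1=\pm A_2T^m$ and conjugating, the two versions of $A^{-1}T^kA$ (in the paper's convention, where the cusp is $A^{-1}\infty$, the relevant relation is $A_1=\pm T^{-m}A_2$) coincide up to sign, so the minimal $k$ is independent of the choice. Worth fixing the wording, but not a mathematical gap.
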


\subsubsection*{An application of the valence formula}
Since eta-products have no zeros or poles in $\uhp$ the following result
follows easily from the valence formula \omyeqn{valform}.
\begin{theorem}
\omylabel{thm:valcor}
Let $f_1(\tau)$, $f_2(\tau)$, \dots, $f_n(\tau)$ be eta-products that
are modular functions on $\Gamma_0(N)$. Let $\mathcal{S}_N$ be a set of inequivalent
cusps for $\Gamma_0(N)$. Define the constant
\beq
B = \sum_{\substack{\zeta\in\mathcal{S}_N\\ \zeta\ne \infty}}
        \mbox{min}
        (\left\{\ORD(f_j,\zeta,\Gamma_0(N))\,:\, 1 \le j \le n\right\}),
\omylabel{eq:Bdef}
\eeq
and consider
\beq
g(\tau) := \alpha_1 f_1(\tau) + \alpha_2 f_2(\tau) + \cdots + \alpha_n f_n(\tau),
\omylabel{eq:gdef}
\eeq
where each $\alpha_j\in\mathbb{C}$. Then
$$
g(\tau) \equiv 0
$$
if and only if
\beq
\ORD(g(\tau), \infty, \Gamma_0(N)) > -B.
\omylabel{eq:ORDBineq}
\eeq
\end{theorem}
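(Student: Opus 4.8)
The plan is to apply the valence formula \eqn{valform} to $g$ more or less directly, after disposing of a trivial case. First, if $g \equiv 0$ then by the usual convention $\ORD(g,\infty,\Gamma_0(N)) = +\infty$, which certainly exceeds the finite number $-B$ (finite because $\mathcal{S}_N$ is a finite set of cusps); so the forward implication of the ``if and only if'' is vacuous. It therefore remains to prove the reverse implication, and I would do this by contraposition: \emph{assuming $g \not\equiv 0$, show that $\ORD(g,\infty,\Gamma_0(N)) \le -B$.}

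Since each $f_j$ is an eta-product that is a modular function on $\Gamma_0(N)$, the finite linear combination $g$ is again a modular function on $\Gamma_0(N)$, and $g\not\equiv0$ by assumption. Crucially, eta-products have no zeros or poles on $\uhp$, so $g$ is holomorphic and $\ORD(g,z,\Gamma_0(N)) \ge 0$ for every $z \in \uhp$. Choosing a fundamental set $\mathscr{F}$ for $\Gamma_0(N)$ whose cusps are exactly $\mathcal{S}_N$ and dropping the non-negative interior contributions in \eqn{valform}, I get
\[
\ORD(g,\infty,\Gamma_0(N)) \;+\; \sum_{\substack{\zeta\in\mathcal{S}_N\\ \zeta\ne\infty}} \ORD(g,\zeta,\Gamma_0(N)) \;\le\; 0 .
\]

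The one remaining ingredient is the cusp estimate $\ORD(g,\zeta,\Gamma_0(N)) \ge \min_{1\le j\le n}\ORD(f_j,\zeta,\Gamma_0(N))$ for each cusp $\zeta\ne\infty$. For a matrix $A\in\Gamma(1)$ with $A^{-1}\infty = \zeta$, the expansions $(\stroke{f_j}{A^{-1}})(\tau)$ are all Laurent series in the \emph{single} local parameter $\exp(2\pi i\tau/\kappa)$, where $\kappa = \kappa(\Gamma_0(N);\zeta)$ is the common fan width at $\zeta$ (Lemma \lem{fanw}); hence $(\stroke{g}{A^{-1}}) = \sum_j \alpha_j (\stroke{f_j}{A^{-1}})$ is a combination of Laurent series in that one variable, and the order of such a sum is at least the minimum of the orders of the summands. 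Including indices $j$ with $\alpha_j=0$ in the minimum only makes the bound weaker, so it remains valid. Summing over $\zeta\ne\infty$ gives $\sum_{\zeta\ne\infty}\ORD(g,\zeta,\Gamma_0(N)) \ge B$, and combining this with the displayed inequality yields $\ORD(g,\infty,\Gamma_0(N)) \le -B$, which is the contrapositive we wanted.

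I do not expect a genuine obstacle here; the statement is a fairly direct corollary of the valence formula. The only points that need a little care are: the convention assigning infinite order at $\infty$ to the zero function (so the two cases match up at the boundary); the observation that all the $f_j$ share the same fan width at each cusp, so that their cusp expansions live in a common local parameter and ``order of a sum $\ge$ minimum of orders'' applies verbatim; and the harmless bookkeeping that enlarging the index set in the minimum (to include vanishing scalars) preserves the lower bound.
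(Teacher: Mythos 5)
Your proof is correct and is exactly the argument the paper has in mind: the paper offers no written proof beyond the remark that the result ``follows easily from the valence formula,'' and your contrapositive argument (holomorphy of $g$ on $\uhp$ giving non-negative interior orders, the bound $\ORD(g,\zeta,\Gamma_0(N))\ge\min_j\ORD(f_j,\zeta,\Gamma_0(N))$ at each cusp via the common local parameter, then \eqn{valform}) is the standard way to fill it in. The only nitpick is that you invoke ``no zeros or poles'' of eta-products where only the absence of poles on $\uhp$ is actually used for $g$; this is harmless.
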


\noindent
\textit{An algorithm for proving eta-product identities.} 

        \vskip 10pt\noindent
{\it\footnotesize STEP 0}. \quad  Write the identity in the following
form: 
\begin{equation}
    \alpha_1 f_1(\tau) + \alpha_2 f_2(\tau) + \cdots + \alpha_n f_n(\tau)  = 0,
\omylabel{eq:fid}
\end{equation}
where each $\alpha_i\in\C$ and each $f_i(\tau)$ is an eta-product of
level $N$.

        \vskip 10pt\noindent
{\it\footnotesize STEP 1}. \quad  Use Theorem \omythm{etamodthm} to check that
$f_j(\tau)$ is a modular function on $\Gamma_0(N)$ for each
$1 \le j \le n$.

        \vskip 10pt\noindent
{\it\footnotesize STEP 2}. \quad  Use Theorem \omythm{chualang} to
find a set $\mathcal{S}_N$ of inequivalent cusps for $\Gamma_0(N)$ and the
fan width of each cusp.

        \vskip 10pt\noindent
{\it\footnotesize STEP 3}. \quad  Use Theorem \omythm{ordthm} to
calculate the order of each eta-product
$f_j(\tau)$ at each cusp of $\Gamma_0(N)$.

        \vskip 10pt\noindent
{\it\footnotesize STEP 4}. \quad  Calculate
        $$
        B =
        \sum_{\substack{\zeta\in\mathcal{S}_N\\ \zeta\ne \infty}}
        \mbox{min}
        (\left\{\ORD(f_j,\zeta,\Gamma_0(N))\,:\, 1 \le j \le n\right\} ).
        $$

        \vskip 10pt\noindent
{\it\footnotesize STEP 5}. \quad  Show that
        $$
        \ORD(g(\tau),\infty,\Gamma_0(N)) > -B
        $$
        where
        $$
        g(\tau) = \alpha_1 f_1(\tau) + \alpha_2 f_2(\tau) +
        \cdots + \alpha_n f_n(\tau).
        $$
        Theorem \omythm{valcor} then implies that $g(\tau)\equiv0$ and
        hence the eta-product identity  \omyeqn{fid}.

The third author has written a \textsc{MAPLE} package
called \texttt{ETA} which implements this algorithm. See
\begin{center}
\url{http://qseries.org/fgarvan/qmaple/ETA/}
\end{center}

\subsubsection*{A modular equation}
Define
\begin{align}
t&:=t(\tau):=\frac{\eta(\tau)^2\eta(10\tau)^4}{\eta(2\tau)^4\eta(5\tau)^2}
\omylabel{eq:deft}\\
&=q-2\,{q}^{2}+3\,{q}^{3}-6\,{q}^{4}+11\,{q}^{5}-16\,{q}^{6}+24\,{q}^{7
}-38\,{q}^{8}+57\,{q}^{9}-82\,{q}^{10}+117\,{q}^{11}+\cdots.
\nonumber
\end{align}
We note that $t(\tau)$ is a Hauptmodul for $\Gamma_0(10)$ 
\cite{Ma09}\omycite{Ma09}.
As an application of our algorithm we prove the following theorem which will be needed
later.
\begin{theorem}
\omylabel{thm:modeq}
Let 
\begin{align}
\sigma_0(\tau)&=-t,\omylabel{eq:sig0}\\
\sigma_1(\tau)&=-5t^2+2\cdot5t,\omylabel{eq:sig1}\\
\sigma_2(\tau)&=-5^2t^3+2\cdot5^2t^2-7\cdot5t,\omylabel{eq:sig2}\\
\sigma_3(\tau)&=-5^3t^4+2\cdot5^3t^3-7\cdot5^2t^2+12\cdot5t,\omylabel{eq:sig3}\\
\sigma_4(\tau)&=-5^4t^5+2\cdot5^4t^4-7\cdot5^3t^3+12\cdot5^2t^2-11\cdot5t,
\omylabel{eq:sig4}
\end{align}
where $t=t(\tau)$ is defined in \eqn{deft}.
Then
\beq
t(\tau)^5 + \sum_{j=0}^4 \sigma_j(5\tau) \, t(\tau)^j = 0.
\omylabel{eq:modeq}
\eeq
\end{theorem}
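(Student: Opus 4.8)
The plan is to recognize the left-hand side of \omyeqn{modeq} as a $\mathbb{C}$-linear combination of eta-products on $\Gamma_0(50)$ and then to invoke the valence-formula criterion of Theorem~\omythm{valcor}, following the algorithm described above with $N=50$. First I would substitute the explicit polynomials \omyeqn{sig0}--\omyeqn{sig4} into \omyeqn{modeq}, expand, and write
$$
g(\tau) := t(\tau)^5 + \sum_{j=0}^{4}\sigma_j(5\tau)\,t(\tau)^j
= \sum_{i} \alpha_i\, t(\tau)^{a_i}\,t(5\tau)^{b_i},
$$
a sum of $16$ monomials indexed by pairs $(a_i,b_i)$ with $0\le a_i\le 5$, $1\le a_i+b_i$, and explicit integer coefficients $\alpha_i$ (powers of $5$ times $1,2,7,12,11$). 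Since
$$
t(\tau)=\frac{\eta(\tau)^2\eta(10\tau)^4}{\eta(2\tau)^4\eta(5\tau)^2},
\qquad
t(5\tau)=\frac{\eta(5\tau)^2\eta(50\tau)^4}{\eta(10\tau)^4\eta(25\tau)^2},
$$
each monomial $t(\tau)^{a_i}t(5\tau)^{b_i}$ is itself an eta-product of the shape \omyeqn{etapdef} with $N=50$ (the divisors involved are $1,2,5,10,25,50$), so $g(\tau)$ has the form \omyeqn{fid}.

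Next I would carry out Steps~1--5 of the algorithm. In Step~1, Theorem~\omythm{etamodthm} is applied to each of the $16$ eta-products: a short computation with the exponent vector attached to $t(\tau)^{a}t(5\tau)^{b}$ gives $\sum_{d\mid 50} m_d = 0$, $\sum_{d\mid 50} d\,m_d = 24a + 120b \equiv 0 \pmod{24}$, $\sum_{d\mid 50}(50/d)m_d = 0$, and $\prod_{d\mid 50} d^{|m_d|}$ a perfect square, so every monomial is a modular function on $\Gamma_0(50)$. In Step~2, Theorem~\omythm{chualang} yields a complete set $\mathcal{S}_{50}$ of the twelve inequivalent cusps of $\Gamma_0(50)$, with fan widths supplied by Lemma~\lem{fanw}. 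In Step~3, Theorem~\omythm{ordthm} and formula \omyeqn{ecord} give $\ORD(t(\tau)^{a_i}t(5\tau)^{b_i},\zeta,\Gamma_0(50))$ at each cusp $\zeta\in\mathcal{S}_{50}$. In Step~4 these are combined into the integer
$$
B = \sum_{\substack{\zeta\in\mathcal{S}_{50}\\ \zeta\ne\infty}}
\min\bigl(\{\,\ORD(t(\tau)^{a_i}t(5\tau)^{b_i},\zeta,\Gamma_0(50)) : 1\le i\le 16\,\}\bigr).
$$
Finally, in Step~5 I would expand $g(\tau)$ as a power series in $q$ and verify that $\ORD(g,\infty,\Gamma_0(50))=\ord_q(g) > -B$ (the fan width of $\Gamma_0(50)$ at $\infty$ is $1$ by Lemma~\lem{fanw}), i.e. that the coefficient of $q^n$ in $g$ vanishes for $0\le n\le -B$. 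Theorem~\omythm{valcor} then forces $g\equiv 0$, which is the identity \omyeqn{modeq}.

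The only part that is not pure bookkeeping is the reason the $q$-expansion of $g$ vanishes to high enough order: the carefully chosen coefficients of \omyeqn{sig0}--\omyeqn{sig4} are exactly what produces the cancellation. Since $\ord_q t(\tau)=1$ and $\ord_q t(5\tau)=5$, one finds $t(\tau)^5 + \sigma_0(5\tau) = -10q^6 + \cdots$, which is cancelled by $\sigma_1(5\tau)t(\tau) = 10q^6 + \cdots$, and the contributions of $\sigma_2,\sigma_3,\sigma_4$ continue to kill the next coefficients, so $\ord_q(g)$ comfortably exceeds $-B$. Accordingly, the main obstacle is simply the size of the computation --- checking Newman's four conditions for all $16$ eta-products, enumerating the twelve cusps of $\Gamma_0(50)$ and computing the $16$ cusp orders at each of the eleven finite cusps in order to obtain $B$, and carrying the $q$-expansion of $g$ to the required precision --- which is exactly the kind of task automated by the \texttt{ETA} package. (Structurally, a monic degree-$5$ relation for $t(\tau)$ over $\mathbb{C}[t(5\tau)]$ is to be expected because $5\mid 10$, so this is the ``level-$5$ modular equation'' for the Hauptmodul $t$ of $\Gamma_0(10)$; but pinning down the explicit coefficients still requires the computation above.)
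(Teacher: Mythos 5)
Your proposal follows essentially the same route as the paper: the authors likewise treat \omyeqn{modeq} as an identity among the sixteen eta-products $t(\tau)^{a}t(5\tau)^{b}$ on $\Gamma_0(50)$ and run the valence-formula algorithm (Newman's criterion, the Chua--Lang cusps with Biagioli's fan widths, Ligozat's cusp orders, then a finite $q$-expansion check at $\infty$). The only cosmetic difference is that the paper first divides the identity by $t(\tau)^5$, which yields the bound $B=-24$ and a verification of the expansion to order $24$; your undivided form changes the value of $B$ slightly but the argument is otherwise identical.
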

\begin{proof}
From Theorem \omythm{etamodthm} we find that $t(\tau)$ is a modular
function on $\Gamma_0(10)$ and $t(5\tau)$ is a modular function
on $\Gamma_0(50)$. Hence each term on the left side of \omyeqn{modeq}
is a modular function on $\Gamma_0(50)$. For convenience
we divide by $t(\tau)^5$ and let
\beq
g(\tau) = 1 + \sum_{j=0}^4 \sigma_j(5\tau) \, t(\tau)^{j-5}.
\omylabel{eq:gsum}
\eeq
From Theorem \omythm{chualang}, Lemma \omylem{fanw} and Theorem \omythm{ordthm}
we have the
following table of fan widths for the cusps of $\Gamma_0(50)$, with
the orders and invariant orders of both $t(\tau)$ and $t(5\tau)$.

$$
\begin{array}{|c|c|c|c|c|c|c|c|c|c|c|c|c|}
\noalign{\hrule}
\zeta &0&1/2&1/5&2/5&3/5&4/5&1/10&3/10&7/10&9/10&1/25&1/50 \\
\noalign{\hrule}
\kappa(\Gamma_0(50),\zeta) &50& 25& 2& 2& 2& 2& 1& 1& 1& 1& 2& 1\\
\noalign{\hrule}
\ord(t(\tau),\zeta)&0& -1/5& 0& 0& 0& 0& 1& 1& 1& 1& 0& 1\\
\noalign{\hrule}
\ORD(t(\tau),\zeta,\Gamma_0(50)&0& -5& 0& 0& 0& 0& 1& 1& 1& 1& 0& 1\\
\noalign{\hrule}
\ord(t(5\tau)&0& -1/25& 0& 0& 0& 0& -1& -1& -1& -1& 0& 5\\
\noalign{\hrule}
\ORD(t(5\tau),\zeta,\Gamma_0(50)&0& -1& 0& 0& 0& 0& -1& -1& -1& -1& 0& 5\\
\noalign{\hrule}
\end{array}
$$
Expanding the right side of \omyeqn{gsum} gives $16$ terms of the
form $t(5\tau)^k t(\tau)^{j-5}$ with $1\le k
\le j+1$ where $0 \le j \le 4$, together with $(k,j)=(0,5)$.
We calculate the order of each term at each cusp $\zeta$ of $\Gamma_0(50)$,
and thus giving  lower bounds for $\ORD(g(\tau),\zeta,\Gamma_(50)$ at each cusp
in the following.
$$
\begin{array}{|c|c|c|c|c|c|c|c|c|c|c|c|c|}
\noalign{\hrule}
\zeta &0&1/2&1/5&2/5&3/5&4/5&1/10&3/10&7/10&9/10&1/25&1/50 \\
\noalign{\hrule}
\ORD(g(\tau),\zeta,\Gamma_0(50))\ge &0& 0& 0& 0& 0& 0& -6& -6& -6& -6& 0& 0 \\
\noalign{\hrule}
\end{array}
$$
Thus the constant $B$ in Theorem \omythm{valcor} is $B=-24$. It suffices to show
that
$$
\ORD(g(\tau),\infty,\Gamma_0(50))> 24.
$$
This is easily verified. Thus by Theorem \omythm{valcor} we have $g(\tau) \equiv 0$ and
the result follows.
\end{proof}

\subsection{The $U_p$ operator}
\omylabel{subsec:Upop}

Let $p$ prime and
$$
f = \sum_{m=m_0}^\infty a(m) q^m
$$
be a formal Laurent series. We
define $U_p$ by
\beq
U_p(f) := \sum_{p m \ge m_0}  a(p m) q^m. 
\omylabel{eq:Updeffls}
\eeq
If $f$ is a modular function (with $q=\exp(2\pi i\tau)$),
\beq
U_p(f) = \frac{1}{p} \sum_{j=0}^p \stroke{f}{\MAT{1/p}{j/p}{0}{1}}
 = \frac{1}{p} \sum_{j=0}^p f\left(\frac{\tau +j}{p}\right).
\omylabel{eq:Updef}
\eeq
By \cite[Lemma 7, p.138]{At-Le70}\omycite{At-Le70} we have
\begin{theorem}
\omylabel{thm:ALUpthm}
Let $p$ be prime. If $f$ is a modular function on $\Gamma_0(pN)$
and $p\mid N$, then $U_p(f)$ is a modular function
on $\Gamma_0(N)$.
\end{theorem}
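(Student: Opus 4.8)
The plan is to verify the three defining properties of a modular function on $\Gamma_0(N)$ for $g := U_p(f)$, using the coset representation $g = \tfrac{1}{p}\sum_{j=0}^{p-1}\stroke{f}{A_j}$ with $A_j = \MAT{1}{j}{0}{p}$ coming from \eqn{Updef}. Property~(i), holomorphy on $\uhp$, is immediate since each map $\tau\mapsto(\tau+j)/p$ sends $\uhp$ into itself and $f$ is holomorphic there. Property~(iii) can be checked last and is routine: for any $A\in\Gamma(1)$ one has $\stroke{g}{A^{-1}} = \tfrac1p\sum_j\stroke{f}{(A_j A^{-1})}$, each $A_jA^{-1}\in M_2^{+}(\Z)$ has determinant $p$, and writing $A_jA^{-1} = B\MAT{a'}{b'}{0}{d'}$ with $B\in\Gamma(1)$ and $a'd'=p$, the expansion of $\stroke{f}{B}$ guaranteed by property~(iii) for $f$ shows that $\stroke{f}{(A_jA^{-1})}$, and hence the finite sum $g$, has a Fourier expansion in a fractional power of $q$ with only finitely many terms of negative order; at $\infty$ itself this also follows directly from the series definition \eqn{Updeffls}.

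The heart of the proof is property~(ii), $\stroke{g}{V}=g$ for all $V=\abcdMAT\in\Gamma_0(N)$. The key observation is that $p\mid c$: since $V\in\Gamma_0(N)$ we have $N\mid c$, and $p\mid N$ by hypothesis. Reducing $ad-bc=1$ modulo $p$ then gives $ad\equiv1\pmod p$, so $a$ and $d$ are units mod $p$. For each $j$ I would define $j'\in\{0,\dots,p-1\}$ by $j'\equiv a^{-1}(b+jd)\pmod p$; as a function of $j$ this is affine with slope $a^{-1}d$, a unit mod $p$, hence $j\mapsto j'$ is a permutation of $\{0,\dots,p-1\}$. A direct computation gives
\[
\gamma_j := A_j\,V\,A_{j'}^{-1} = \MAT{a+jc}{\frac{b+jd-j'(a+jc)}{p}}{pc}{d-j'c},
\]
which has determinant $1$; its lower-left entry $pc$ is divisible by $pN$, and its upper-right entry is an integer precisely because of the choice of $j'$ together with $p\mid c$. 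Thus $\gamma_j\in\Gamma_0(pN)$, so $\stroke{f}{A_jV} = \stroke{(\stroke{f}{\gamma_j})}{A_{j'}} = \stroke{f}{A_{j'}}$ by property~(ii) for $f$, and summing over $j$ (equivalently over $j'$) yields $\stroke{g}{V} = \tfrac1p\sum_{j'}\stroke{f}{A_{j'}} = g$.

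I expect the only real obstacle to be the bookkeeping in this middle step: constructing the permutation $j\mapsto j'$ and checking that the conjugators $\gamma_j$ lie in $\Gamma_0(pN)$ and not merely in $\Gamma_0(N)$. This is exactly the point at which the hypothesis $p\mid N$ is used — it is what forces $p\mid c$ — and it is the statement that genuinely fails when $p\nmid N$, where one would additionally have to handle the coset of $\MAT{p}{0}{0}{1}$. Everything else reduces to the cocycle identity $\stroke{f}{MS}=\stroke{\stroke{f}{M}}{S}$ and the meromorphy of $f$ at the cusps of $\Gamma_0(pN)$.
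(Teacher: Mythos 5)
Your argument is correct. Note, though, that the paper does not prove this statement at all: it is quoted directly from Atkin and Lehner (Lemma 7 of \cite{At-Le70}), so there is no in-paper proof to compare against. What you have written is essentially the standard Atkin--Lehner coset computation, and every step checks out: with $A_j=\MAT{1}{j}{0}{p}$ and $V=\abcdMAT\in\Gamma_0(N)$ one has $\gamma_j=A_jVA_{j'}^{-1}=\MAT{a+jc}{(b+jd-j'(a+jc))/p}{pc}{d-j'c}$ of determinant $1$; the hypothesis $p\mid N$ forces $p\mid c$, which is exactly what makes the upper-right entry integral for $j'\equiv a^{-1}(b+jd)\pmod p$ and makes $j\mapsto j'$ an affine bijection of $\Z/p\Z$ (slope $a^{-1}d$, a unit since $ad\equiv 1\pmod p$), while $pN\mid pc$ puts $\gamma_j$ in $\Gamma_0(pN)$ rather than merely $\Gamma_0(N)$ — which is what is needed to invoke the invariance of $f$. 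The cusp condition is indeed routine once $\Gamma_0(N)$-invariance is established (periodicity of $\stroke{g}{A^{-1}}$ under $T^{\kappa}$ comes for free, and the finiteness of negative Fourier exponents follows from the Hermite decomposition $A_jA^{-1}=B\MAT{a'}{b'}{0}{d'}$ you describe). Two cosmetic points: the paper's formula \eqn{Updef} has the summation range $0\le j\le p$, an evident typo for $0\le j\le p-1$ which you have silently and correctly repaired; and your phrase ``property~(ii) for $f$'' refers to the transformation law, which the paper's definition of a modular function also labels (ii), so the numbering is consistent.
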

Gordon and Hughes \cite[Theorem 4, p.336]{GoHu81}\omycite{GoHu81} have found
lower bounds for the invariant orders of $U_p(f)$ at cusps.
Let $\nu_p(n)$ denote the $p$-adic order of an integer $n$; i.e.\ the
highest power of $p$ that divides $n$.

\begin{theorem}[{\cite[Theorem 4]{GoHu81}\omycite{GoHu81}}]
\omylabel{thm:UpLB}
Suppose $f(\tau)$ is a modular function on $\Gamma_0(pN)$, where $p$ is
prime and $p\mid N$. Let $r=\frac{\beta}{\delta}$ be a cusp of
$\Gamma_0(N)$, where $\delta\mid N$ and $(\beta,\delta)=1$. Then
$$
\ORD(U_p(f), r, \Gamma_0(N)) \ge
\begin{cases}
\frac{1}{p} \ORD(f,r/p,\Gamma_0(pN)) & \mbox{if $\nu_p(\delta)\ge \frac{1}{2} \nu_p(N)$}\\
\ORD(f,r/p,\Gamma_0(pN)) & \mbox{if $0 < \nu_p(\delta)< \frac{1}{2} \nu_p(N)$}\\
\umin{0 \le k \le p-1} \ORD(f,(r+k)/p,\Gamma_0(pN))
&\mbox{if $\nu_p(\delta)=0$}.
\end{cases}
$$
\end{theorem}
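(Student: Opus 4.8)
The plan is to imitate the Atkin--Lehner computation behind Theorem \ref{thm:ALUpthm}, but now keeping track of the invariant order at a prescribed cusp rather than just membership in a space of modular functions. Write $U_p(f) = \tfrac1p\sum_{j=0}^{p-1}\stroke{f}{\MAT{1}{j}{0}{p}}$ (the scalar and the determinant $p$ affect neither modularity nor orders). To read off $\ORD(U_p(f),r,\Gamma_0(N))$ at $r=\beta/\delta$, fix $A\in\SLZ$ with $A^{-1}\infty=r$ and study the $q$-expansion of $\stroke{U_p(f)}{A^{-1}}=\tfrac1p\sum_{j}\stroke{f}{\MAT{1}{j}{0}{p}A^{-1}}$; since the $q$-order of a finite sum is at least the minimum of the $q$-orders of its terms, it suffices to bound below the $q$-order of each $\stroke{f}{\MAT{1}{j}{0}{p}A^{-1}}$ and then multiply by the fan width $\kappa(\Gamma_0(N),r)=N/(N,\delta^2)$ supplied by Lemma \ref{lem:fanw}.

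For each $j$ I would factor $\MAT{1}{j}{0}{p}A^{-1}=\gamma_j\,A_j^{-1}\,\MAT{w_j}{h_j}{0}{w_j'}$, where $\gamma_j\in\Gamma_0(pN)$, the matrix $A_j\in\SLZ$ satisfies $A_j^{-1}\infty=\tfrac{\beta+j\delta}{p\delta}$ (the cusp $\tfrac1p(r+j)$, reduced to lowest terms), and the upper-triangular tail has determinant $w_jw_j'=p$, so $\{w_j,w_j'\}=\{1,p\}$. The modularity of $f$ on $\Gamma_0(pN)$ kills $\gamma_j$, and part (iii) of the definition of a modular function gives $\stroke{f}{A_j^{-1}}$ an expansion in $q^{1/\kappa_j}$ beginning in degree $\ORD\bigl(f,\tfrac{\beta+j\delta}{p\delta},\Gamma_0(pN)\bigr)$, with $\kappa_j=\kappa\bigl(\Gamma_0(pN),\tfrac{\beta+j\delta}{p\delta}\bigr)$. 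Applying the scaling tail multiplies the exponent by $w_j/w_j'$, so the $j$-th term contributes $q$-order $\tfrac{w_j}{w_j'}\cdot\tfrac1{\kappa_j}\ORD\bigl(f,\tfrac{\beta+j\delta}{p\delta},\Gamma_0(pN)\bigr)$; multiplying by $\kappa(\Gamma_0(N),r)$ and minimizing over $j$ yields a provisional lower bound for $\ORD(U_p(f),r,\Gamma_0(N))$.

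It remains to evaluate this bound in the three regimes, which is the crux. Using $\gcd(\beta,\delta)=1$ I would reduce each $\tfrac{\beta+j\delta}{p\delta}$ to lowest terms and sort the resulting cusps into $\Gamma_0(pN)$-equivalence classes via the Chua--Lang description in Theorem \ref{thm:chualang} (cusps of a fixed denominator $d$ being indexed modulo $e_d=(d,pN/d)$). When $\nu_p(\delta)\ge\tfrac12\nu_p(N)$, every $j$ gives a cusp $\Gamma_0(pN)$-equivalent to $r/p=\tfrac{\beta}{p\delta}$; a short $p$-adic computation shows $(N,\delta^2)=(N,p\delta^2)$, hence $\kappa_j=\kappa(\Gamma_0(N),r)$, while the scaling tail is forced to have $w_j/w_j'=1/p$, producing the factor $\tfrac1p$ of the first case. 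When $0<\nu_p(\delta)<\tfrac12\nu_p(N)$, the same bookkeeping shows the fan-width ratio exactly cancels the scaling ratio, leaving coefficient $1$. When $p\nmid\delta$, the $p$ fractions $\tfrac{\beta+j\delta}{p\delta}$ are pairwise $\Gamma_0(pN)$-inequivalent and represent precisely the cusps $(r+k)/p$ for $0\le k\le p-1$, so the minimum over $j$ becomes $\umin{0\le k\le p-1}\ORD(f,(r+k)/p,\Gamma_0(pN))$. The main obstacle is exactly this last step: keeping the reductions to lowest terms, the equivalence classes, and the fan-width and scaling ratios mutually consistent so that the three stated bounds emerge cleanly, and verifying that the dichotomy $\nu_p(\delta)$ versus $\tfrac12\nu_p(N)$ is precisely what separates the first two cases.
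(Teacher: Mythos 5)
The paper itself offers no proof of this theorem: it is quoted directly from Gordon and Hughes with a citation, so there is no in-paper argument to compare against. Your outline --- writing $U_p(f)=\tfrac1p\sum_{j}\stroke{f}{\MAT{1}{j}{0}{p}}$, factoring each $\MAT{1}{j}{0}{p}A^{-1}$ through an element of $\SLZ$ carrying $\infty$ to the reduced cusp $(\beta+j\delta)/(p\delta)$ times an upper-triangular tail of determinant $p$, bounding the $q$-order of the sum by the minimum over the terms, and reconciling fan widths --- is the standard route and is essentially the argument of the cited source. Your bookkeeping in the first and third cases checks out: when $\nu_p(\delta)\ge\tfrac12\nu_p(N)$ one has $(p\delta,N/\delta)\mid\delta$, so all $p$ cusps $(\beta+j\delta)/(p\delta)$ fall into the $\Gamma_0(pN)$-class of $r/p$, the widths agree, and the tail contributes the factor $1/p$; when $p\nmid\delta$ the width ratio and the scaling ratio cancel to give coefficient $1$ for every $j$ (including the single $j$ with $p\mid\beta+j\delta$, where both ratios are inverted), and the minimum over the $p$ cusps is exactly the stated bound.

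The gap is in the middle case. When $0<\nu_p(\delta)<\tfrac12\nu_p(N)$ the numerators $\beta+j\delta$, $0\le j\le p-1$, are pairwise incongruent modulo $p^{\nu_p(\delta)+1}$, and $p^{\nu_p(\delta)+1}$ divides $e_{p\delta}=(p\delta,pN/(p\delta))$; by the Chua--Lang criterion the $p$ cusps $(\beta+j\delta)/(p\delta)$ are therefore pairwise \emph{inequivalent} modulo $\Gamma_0(pN)$, not all equivalent to $r/p$ as in the first case. Your term-by-term estimate then yields only
$\ORD(U_p(f),r,\Gamma_0(N))\ge\umin{0\le j\le p-1}\ORD(f,(r+j)/p,\Gamma_0(pN))$
(with coefficient $1$, as you note), which is \emph{weaker} than the asserted bound $\ORD(f,r/p,\Gamma_0(pN))$, since the minimum may be attained at some $j\ne0$. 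To obtain the second case as stated you need an extra input relating the orders of $f$ at these $p$ inequivalent cusps, or an argument that the $j\ne0$ terms cannot depress the order of the sum; neither appears in your sketch, and this is precisely the step you flag as "the crux" without resolving it. (The issue is harmless for the present paper: here $p=5$ and $N=20$, so $\nu_5(N)=1$ and the middle case never occurs.)
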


Theorems \omythm{valcor}, \omythm{ALUpthm} and \omythm{UpLB} give
the following algorithm.

\subsubsection*{An algorithm for proving $U_p$ eta-product identities} \hphantom{X}

        \vskip 10pt\noindent
{\it\footnotesize STEP 0}. \quad  Write the identity in the form
\begin{equation}
{U_p}\left(\alpha_1 g_1(\tau) + \alpha_2 g_2(\tau) + \cdots + \alpha_n g_k(\tau)
\right)=
\alpha_1 f_1(\tau) + \alpha_2 f_2(\tau) + \cdots + \alpha_n f_n(\tau),
\omylabel{eq:Upfid}
\end{equation}
where $p$ is prime, $p\mid N$, each $g_j(\tau)$ is an eta-product and
a modular function on $\Gamma_0(pN)$, and each $f_j(\tau)$
is an eta-product and modular function on $\Gamma_0(N)$.

        \vskip 10pt\noindent
{\it\footnotesize STEP 1}. \quad  Use Theorem \omythm{etamodthm} to check that
$f_j(\tau)$ is a modular function on $\Gamma_0(N)$ for each
$1 \le j \le n$, and $g_j(\tau)$ is a modular function on $\Gamma_0(pN)$
for each $1 \le j \le k$.

        \vskip 10pt\noindent
{\it\footnotesize STEP 2}. \quad  Use Theorem \omythm{chualang} to
find a set $\mathcal{S}_N$ of inequivalent cusps for $\Gamma_0(N)$ and the
fan width of each cusp.

        \vskip 10pt\noindent
{\it\footnotesize STEP 3a}. \quad  Compute $\ORD(f_j,\zeta,\Gamma_0(N))$
for each $j$ at each cusp $\zeta$ of $\Gamma_0(N)$ apart from $\infty$.

        \vskip 10pt\noindent
{\it\footnotesize STEP 3b}. \quad  Use Theorem \omythm{UpLB}
to find lower bounds $L(g_j,\zeta,N)$ for
$$
\ORD({U_p}(g_j), \zeta, \Gamma_0(N))
$$
for each cusp $\zeta$ of $\Gamma_0(N)$, and each $1 \le j \le k$.

        \vskip 10pt\noindent
{\it\footnotesize STEP 4}. \quad  Calculate
\beq
        B =
        \sum_{\substack{\zeta\in\mathcal{S}_N\\ \zeta\ne \infty}}
        \mbox{min}
(\left\{\ORD(f_j,\zeta,\Gamma_0(N))\,:\, 1 \le j \le n\right\} \cup 
\{L(g_j,\zeta,N)\,:\, 1 \le j \le k \}).
\omylabel{eq:UpB}
\eeq

        \vskip 10pt\noindent
{\it\footnotesize STEP 5}. \quad  Show that
        $$
        \ORD(h(\tau),\infty,\Gamma_0(N)) > -B
        $$
        where
        $$
        h(\tau) = 
{U_p}\left(\alpha_1 g_1(\tau) + \alpha_2 g_2(\tau) + \cdots + \alpha_n g_k(\tau)
        \right)
        - (\alpha_1 f_1(\tau) + \alpha_2 f_2(\tau) +
        \cdots + \alpha_n f_n(\tau)).
        $$
        Theorem \omythm{valcor} then implies that $h(\tau)\equiv0$ and
        hence the $U_p$ eta-product identity  \omyeqn{Upfid}.

The third author has included an implementation of this
algorithm in his \texttt{ETA} \textsc{MAPLE} package.

As an application of our algorithm we sketch the proof of
\beq
{U_5}(g) = 5\,f_1(\tau) + 2\,f_2(\tau),
\omylabel{eq:Upalgeg}
\eeq
where
$$
g(\tau)={\frac {  \eta \left( 50\,\tau \right)   ^{5}  \eta  \left( 5\,\tau\right)    ^{4}  \eta \left( 4\,\tau  \right)   ^{3}  \eta \left( 2\,\tau \right)   ^{3} }{  \eta \left( 100\,\tau \right)   ^{3}  \eta  \left( 25\,\tau \right)   ^{2}  \eta \left( 10\,\tau  \right)   ^{8}  \eta \left( \tau \right)   ^{2}}},
$$
$$
f_1(\tau) = {\frac {  \eta \left( 10\,\tau \right)  ^{8}  \eta  \left( \tau \right)  ^{4}}{  \eta \left( 5\,\tau  \right)  ^{4}  \eta \left( 2\,\tau \right)  ^{8} }},\quad
f_2(\tau)= {\frac {  \eta \left( 10\,\tau \right)  ^{5}  \eta  \left( \tau \right)  ^{2}}{  \eta \left( 20\,\tau  \right)  ^{3}  \eta \left( 5\,\tau \right)  ^{2} \eta \left( 4\,\tau \right) \eta \left( 2\,\tau \right) }}.
$$
We use Theorem \omythm{etamodthm} to check that
$f_j(\tau)$ is a modular function on $\Gamma_0(20)$ for each
$1 \le j \le 2$, and $g(\tau)$ is a modular function on $\Gamma_0(100)$.
We use Theorem \omythm{chualang} to
find a set $\mathcal{S}_{20}$ of inequivalent cusps for $\Gamma_0(20)$ and the
fan width of each cusp. By Theorems \omythm{chualang}, \omythm{ordthm}
and Lemma \omylem{fanw} we have the following table of orders.
$$
\begin{array}{|c|c|c|c|c|c|c|}
\noalign{\hrule}
\zeta &0&1/2&1/4&1/5&1/10&1/20\\
\noalign{\hrule}
\ORD(f_1(\tau),\zeta,\Gamma_0(20))& 0& -2& -2& 0& 2& 2\\
\noalign{\hrule}
\ORD(f_2(\tau),\zeta,\Gamma_0(20))& 1& 0& -1& 0& 1& -1\\
\noalign{\hrule}
\end{array}
$$
Using
Theorems \omythm{chualang}, \omythm{ordthm}, \thm{UpLB} and some calculation
we have the following table of lower bounds $L(g,\zeta,20)$.
$$
\begin{array}{|c|c|c|c|c|c|c|}
\noalign{\hrule}
\zeta &0&1/2&1/4&1/5&1/10&1/20\\
\noalign{\hrule}
\ORD({U_5}(g), \zeta, \Gamma_0(20)) \ge  & 0& -2& -2& -1/5& 3/5& -6/5\\
\noalign{\hrule}
\end{array}
$$
Thus the constant $B$ in \omyeqn{UpB} is $B=-18/5$.
It suffices to show
that
$$
\ORD(h(\tau),\infty,\Gamma_0(20))\ge 4,
$$
where
        $$
        h(\tau) = {U_5}(g)
        - (5 f_1(\tau) + 2 f_2(\tau)).
        $$
This is easily verified. Thus by Theorem \omythm{valcor} we have $h(\tau) \equiv 0$ and
the result \omyeqn{Upalgeg} follows.

\subsection{Generalized eta-functions}
\omylabel{subsec:geneta}
The generalized Dedekind eta function is defined to be
\begin{equation}
\eta_{\delta,g}(\tau) =q^{\frac{\delta}{2} P_2(g/\delta) }
\prod_{m\equiv \pm g\pmod{\delta}} (1 - q^m),
\omylabel{eq:Geta}
\end{equation}
where $P_2(t) = \{t\}^2 - \{t\} + \tfrac16$ is the second periodic Bernoulli polynomial,
$\{t\}=t-[t]$ is the fractional part of $t$, $g,\delta,m\in \mathbb{Z}^{+}$
and $0 < g < \delta$. The function $\eta_{\delta,g}(\tau)$ is a modular function
on $\SLZ$ with a multiplier system.
Let $N$ be a fixed positive integer.
A generalized Dedekind eta-product of level $N$ has the form
\begin{equation} \omylabel{eq:fdef}
f(\tau) = \prod_{\substack{\delta\mid N\\ 0 < g < \delta}}
                 \eta_{\delta,g}^{r_{\delta,g}}(\tau),
\end{equation}
where
\begin{equation} \omylabel{r-dg}
     r_{\delta,g} \in
     \begin{cases}
      \frac{1}{2}\Z & \mbox{if $g=\delta/2$},\\
      \Z & \mbox{otherwise}.
      \end{cases}
\end{equation}
Robins \cite{Ro94}\omycite{Ro94} has found sufficient conditions under which a
generalized eta-product is a modular function on $\Gamma_1(N)$.
\begin{theorem}[{\cite[Theorem 3]{Ro94}\omycite{Ro94}}]
\omylabel{thm:gepmodfunc}
The function $f(\tau)$, defined in \omyeqn{fdef}, is a
modular function on $\Gamma_1(N)$ if
\begin{enumerate}
\item[(i)]
$\displaystyle\sum_{\substack{\delta \mid N \\ g}}
\delta P_2(\textstyle{\frac{g}{\delta}}) r_{\delta,g} \equiv 0 \pmod{2}$,
and
\item[(ii)]
$\displaystyle\sum_{\substack{\delta \mid N \\ g} }
\frac{N}{\delta} P_2(0) r_{\delta,g} \equiv 0 \pmod{2}$.
\end{enumerate}
\end{theorem}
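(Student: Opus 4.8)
\textit{Proof proposal.} The plan is to check directly the three conditions in the definition of a modular function on $\Gamma_1(N)$ for $f(\tau)$ as in \omyeqn{fdef}, isolating the invariance $\stroke fV=f$ ($V\in\Gamma_1(N)$) as the only substantial point, and then to see that the two congruences (i) and (ii) are exactly what force that invariance.

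First I would dispose of holomorphy on $\uhp$ and of the expansions at the cusps. For $\tau\in\uhp$ we have $\abs q<1$, so $\prod_{m\equiv\pm g\,(\delta)}(1-q^m)$ converges absolutely to a non-zero value while $q^{\delta P_2(g/\delta)/2}=e^{\pi i\delta P_2(g/\delta)\tau}$ is entire and non-vanishing; hence every $\eta_{\delta,g}$ is holomorphic and non-vanishing on $\uhp$, and so is $f$ — a finite product of integer powers of such functions together with at most a holomorphic square root of one non-vanishing factor $\eta_{\delta,\delta/2}$, which exists on the simply connected domain $\uhp$. For the cusp condition I would invoke the known $\SLZ$-transformation law of $\eta_{\delta,g}$ (recorded, e.g., by Robins): for any $A\in\SLZ$, $\stroke{\eta_{\delta,g}}{A^{-1}}$ is a root of unity times a generalized-eta-type product in a fractional power $q^{1/\kappa}$, hence has a $q^{1/\kappa}$-expansion bounded below; raising to the powers $r_{\delta,g}$ and multiplying, $\stroke{f}{A^{-1}}$ has an expansion of the required shape.

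The heart of the matter is $\stroke fV=f$ for all $V\in\Gamma_1(N)$. Since each $\eta_{\delta,g}$ has weight $0$, for $V=\MAT abcd\in\Gamma_1(N)$ the transformation law specializes nicely: $\delta\mid N\mid c$ and $a\equiv1\pmod N$ give $ag\equiv g\pmod\delta$, whence $\eta_{\delta,ag}=\eta_{\delta,g}$, so $\eta_{\delta,g}(V\tau)=u_{\delta,g}(V)\,\eta_{\delta,g}(\tau)$ for a constant $u_{\delta,g}(V)\in\C^\times$. Because the weight is $0$ these constants are multiplicative in $V$, so each $u_{\delta,g}\colon\Gamma_1(N)\to\C^\times$ is a group homomorphism, and hence so is $v(V):=f(V\tau)/f(\tau)=\prod_{\delta\mid N,\,g}u_{\delta,g}(V)^{r_{\delta,g}}$; thus $f$ is $\Gamma_1(N)$-invariant iff $v\equiv1$. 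Now $\Gamma_1(N)=\langle T\rangle\cdot\Gamma(N)$ with $T=\TMAT\in\Gamma_1(N)$, since $\Gamma_1(N)/\Gamma(N)$ is cyclic of order $N$ generated by the image of $T$; so it is enough to prove $v(T)=1$ and $v\equiv1$ on $\Gamma(N)$. For the first, $\tau\mapsto\tau+1$ fixes $q$, so $\eta_{\delta,g}(\tau+1)=e^{\pi i\delta P_2(g/\delta)}\eta_{\delta,g}(\tau)$ and hence $v(T)=\exp\bigl(\pi i\sum_{\delta\mid N,\,g}\delta P_2(g/\delta)\,r_{\delta,g}\bigr)$, which is $1$ exactly when (i) holds. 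For $\Gamma(N)$ I would substitute a general $V=\MAT abcd\in\Gamma(N)$ into the full transformation formula for $\eta_{\delta,g}$: since $N\mid b,c,a-1,d-1$ and $\delta\mid N$, the Dedekind-sum term, the quadratic-in-$g$ phase and the linear-in-$g$ theta phase all collapse modulo $2$, and after summing over $(\delta,g)$ one is left with $v(V)=\exp\bigl(\pi i(\text{an integer combination of }\sum_{\delta,g}\delta P_2(g/\delta)r_{\delta,g}\text{ and }\sum_{\delta,g}\tfrac N\delta P_2(0)r_{\delta,g})\bigr)$, which by (i) and (ii) is $1$. (Equivalently one checks the multiplier is trivial at every cusp $b/c$: conjugating the local parabolic to $\infty$ and using the transformation formula, its exponent there is a fixed $\Z$-combination of the quantities in (i) and (ii).)

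The main obstacle will be this last computation: evaluating $u_{\delta,g}(V)$ for a general $V\in\Gamma(N)$ — equivalently, pinning down the multiplier at an arbitrary cusp — and confirming that modulo $2$ its only surviving contributions are the combinations $\sum\delta P_2(g/\delta)r_{\delta,g}$ (killed by (i)) and $\sum\tfrac N\delta P_2(0)r_{\delta,g}$ (killed by (ii)). This needs the $\SLZ$-transformation formula for generalized Dedekind eta functions with the Dedekind sums, the $P_2$-terms and the linear $g$-phase written out explicitly, which I would quote rather than re-derive, and it is where the sign and normalization bookkeeping is delicate. It helps that the theorem only claims (i)--(ii) are \emph{sufficient}: the reduction may be lossy, so one only needs (i)--(ii) to force $v\equiv1$, not to characterize it.
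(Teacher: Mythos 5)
The first thing to note is that the paper does not prove this statement at all: Theorem \thm{gepmodfunc} is quoted from Robins \cite{Ro94} as background, so there is no in-paper argument to compare yours against. Judged on its own terms, your outline gets the easy reductions right: holomorphy and non-vanishing of each $\eta_{\delta,g}$ on $\uhp$; the fact that for $V=\abcdMAT\in\Gamma_1(N)$ one has $ag\equiv g\pmod{\delta}$, so each $\eta_{\delta,g}$ transforms by a constant; the observation that in weight $0$ these constants assemble into a homomorphism $v\colon\Gamma_1(N)\to\C^{\times}$; the decomposition $\Gamma_1(N)=\langle T\rangle\cdot\Gamma(N)$; and the identification of condition (i) with $v(T)=1$. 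All of that is correct.

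The gap is that the entire content of the theorem sits in the one step you do not carry out: showing that (i) and (ii) force $v\equiv 1$ on $\Gamma(N)$. You assert that, upon substituting a general $V\in\Gamma(N)$ into the transformation law, ``the Dedekind-sum term, the quadratic-in-$g$ phase and the linear-in-$g$ theta phase all collapse modulo $2$'' to an integer combination of the two sums in (i) and (ii); but that collapse is precisely what Robins proves, using the explicit multiplier of $\eta_{\delta,g}$ in terms of generalized Dedekind sums together with their reciprocity and congruence properties, and it is not a routine verification --- in particular it is where condition (ii), which has no visible role anywhere else in your argument, must actually be used. Your parenthetical fallback --- checking triviality of the multiplier only on the parabolic elements fixing each cusp --- is not logically equivalent to triviality on $\Gamma_1(N)$: for large $N$ the group has positive genus, its abelianization acquires a free part from hyperbolic generators, and a character trivial on all parabolics need not be trivial. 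As written, then, the proposal reduces the theorem to an unproved claim that is essentially the theorem itself; to close it you would need to write out the full transformation formula with its generalized Dedekind sum and do the mod-$2$ bookkeeping, or simply cite Robins for the whole statement, as the paper does.
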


Cho, Koo and Park \cite{Ch-Ko-Pa}\omycite{Ch-Ko-Pa} have found a set of
inequivalent
cusps for $\Gamma_1(N) \cap \Gamma_0(mN)$.
The group $\Gamma_1(N)$ corresponds to the case $m=1$.
\begin{theorem}[{\cite[Corollary 4, p.930]{Ch-Ko-Pa}\omycite{Ch-Ko-Pa}}]
\omylabel{thm:chokoopark}
Let $a$, $c$, $a'$, $c\in\Z$ with $(a,c)=(a',c')=1$.
\begin{enumerate}
\item[(i)]
The cusps $\frac{a}{c}$ and $\frac{a'}{c'}$ are equivalent mod $\Gamma_1(N)$
if and only if
$$
\begin{pmatrix}
a' \\ c'
\end{pmatrix}
\equiv \pm
\begin{pmatrix}
 a + nc\\
 c
\end{pmatrix}
\pmod{N}
$$
for some integer $n$.
\item[(ii)]
The following is a complete set of inequivalent cusps mod $\Gamma_1(N)$.
\begin{align*}
\mathcal{S} &= \left\{ \frac{y_{c,j}}{x_{c,i}} \,:\, 0 < c \mid N,\,
  0 < s_{c,i},\, a_{c,j} \le N,\, (s_{c,i},N)=(a_{c,j},N)=1,\right.\\
     &\qquad s_{c,i}=s_{c,i'} \iff s_{c,1}\equiv\pm s_{c',i'}
                  \pmod{{\textstyle \frac{N}{c}}},\\
    &\qquad a_{c,j}=a_{c,j'} \iff
\begin{cases}
a_{c,j}\equiv\pm a_{c,j'} \pmod{c}, &\mbox{if $c =\frac{N}{2}$ or $N$},\\
a_{c,j}\equiv a_{c,j'} \pmod{c}, &\mbox{otherwise},
\end{cases} \\
&\left. \vphantom{ \frac{y_{c,j}}{x_{c,i}} } x_{c,i}, y_{c,j}\in\Z\,
\mbox{chosen s.th.}\,
x_{c,i}\equiv c s_{c,i},\, y_{c,j}\equiv a_{c,j}\pmod{N},
\, (x_{c,i},y_{c,j})=1\right\},
\end{align*}
\item[(iii)]
and the fan width of the cusp $\frac{a}{c}$ is given by
$$
\kappa({\textstyle \frac{a}{c}},\Gamma_1(N)) =
\begin{cases}
1, & \mbox{if $N=4$ and $(c,4)=2$},\\
\frac{N}{(c,N)}, & \mbox{otherwise}.
\end{cases}
$$
\end{enumerate}
\end{theorem}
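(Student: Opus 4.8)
The plan is to pass to the standard description of cusps as double cosets. A cusp of $\Gamma_1(N)$ is a $\Gamma_1(N)$-orbit on $\mathbb{Q}\cup\{\infty\}=\SLZ/\Gamma_\infty$, where $\Gamma_\infty=\{\pm T^{n}:n\in\Z\}$ is the stabiliser of $\infty$, so the set of cusps is $\Gamma_1(N)\backslash\SLZ/\Gamma_\infty$. First I would establish the bijection $\Gamma_1(N)\backslash\SLZ\longrightarrow\{(c,d)\in(\Z/N)^{2}:(c,d,N)=1\}$ sending a coset to the bottom row of any representative reduced mod $N$; the only point is injectivity, and if $g=\MAT{a}{b}{c}{d}$ and $g'\in\SLZ$ have bottom rows congruent mod $N$ then the bottom row of $g'g^{-1}$ is $\equiv(0,1)\pmod N$ and, since $g'g^{-1}\in\SLZ$, its top-left entry is then $\equiv1\pmod N$, so $g'g^{-1}\in\Gamma_1(N)$. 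Composing with the right $\Gamma_\infty$-action, which on bottom rows is $(c,d)\mapsto\pm(c,ck+d)$, the cusps of $\Gamma_1(N)$ become the pairs $(c\bmod N,\ d\bmod(c,N))$ up to simultaneous sign. For $g=\MAT{a}{b}{c}{d}$ the associated cusp is $g\infty=\tfrac ac$, and $ad-bc=1$ forces $d\equiv a^{-1}\pmod{(c,N)}$; since inversion is a sign-respecting bijection of the units mod $(c,N)$, this pair is the same datum as $(c\bmod N,\ a\bmod(c,N))$ up to simultaneous sign. Hence $\tfrac ac\sim\tfrac{a'}{c'}$ iff $c'\equiv\varepsilon c\pmod N$ and $a'\equiv\varepsilon a\pmod{(c,N)}$ for a common sign $\varepsilon=\pm1$, and because $\{a+nc\bmod N:n\in\Z\}=a+(c,N)\Z/N$ this is exactly $\binom{a'}{c'}\equiv\pm\binom{a+nc}{c}\pmod N$ for some $n$, which is (i).

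For part (ii) one organises the orbit space from (i). The invariant $c_{0}:=(c,N)$ is a positive divisor of $N$; for fixed $c_{0}$, a residue $c\bmod N$ with $(c,N)=c_{0}$ has the form $c=c_{0}s$ with $(s,N/c_{0})=1$ and $s$ well defined mod $N/c_{0}$, while $a\bmod c_{0}$ is a unit, and the simultaneous sign is $(s,a)\mapsto(-s,-a)$; this is precisely the parametrisation in the statement, with its divisor equal to $c_{0}$. The point needing care is the case split on the $a$-condition: as $(s,N/c_{0})=1$, one has $s\equiv-s\pmod{N/c_{0}}$ exactly when $N/c_{0}\in\{1,2\}$, i.e.\ $c_{0}\in\{N/2,N\}$, so for $c_{0}\notin\{N/2,N\}$ the sign is already detected by $s$ and $a$ is well defined mod $c_{0}$ (no sign), whereas for $c_{0}\in\{N/2,N\}$ the sign sees only $a$, which is then defined mod $c_{0}$ up to sign. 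Finally one exhibits a genuine cusp $\tfrac{y}{x}$ with $x\equiv c_{0}s$, $y\equiv a\pmod N$, $(x,y)=1$: no prime dividing $N$ can divide $y$ (since $(a,N)=1$), so only primes coprime to $N$ must be avoided by $x$, and that is arranged within the class $c_{0}s\bmod N$ by the Chinese Remainder Theorem; conversely (i) shows distinct admissible pairs yield inequivalent cusps, so the list is complete and without repetition.

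For part (iii), fix $\zeta=\tfrac ac$ and $P=\MAT{a}{b}{c}{d}\in\SLZ$ with $P\infty=\zeta$; then
$$
P\,T^{k}\,P^{-1}=I+k\,\MAT{-ac}{a^{2}}{-c^{2}}{ac},
$$
and $\kappa(\Gamma_1(N);\zeta)$ is the least $k>0$ with $\pm P T^{k}P^{-1}\in\Gamma_1(N)$. Membership of $+PT^{k}P^{-1}$ amounts to $N\mid c^{2}k$ and $N\mid ack$ simultaneously; a prime-by-prime check using $(a,c)=1$ shows this conjunction holds precisely when $\tfrac{N}{(c,N)}\mid k$, so without the sign the width is $\tfrac{N}{(c,N)}$. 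For the sign, $-PT^{k}P^{-1}\in\Gamma_1(N)$ would force $ack\equiv2$ and $ack\equiv-2\pmod N$, hence $N\mid4$; when $N\le2$ this imposes nothing new (indeed $-I\in\Gamma_1(N)$), and when $N=4$ inspection of the cases $(c,4)\in\{1,2,4\}$ shows the width improves only for $(c,4)=2$, where $c^{2}k\equiv0\pmod4$ automatically and $ack\equiv2k\pmod4$, so $-PT^{1}P^{-1}\in\Gamma_1(4)$ and $\kappa=1$. This is the stated formula.

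The step I expect to be the real obstacle is part (ii): checking that the theorem's bookkeeping — the divisor $c\mid N$ recording $\gcd(\text{denominator},N)$, the residues $s\bmod N/c$ and $a\bmod c$ with their precise sign conventions, and the simultaneous existence of coprime lifts $x_{c,i},y_{c,j}$ — matches exactly the orbit space produced by (i), with no repetitions and no omissions, the exceptional behaviour at $c\in\{N/2,N\}$ being the subtle part. Parts (i) and (iii) are comparatively short once the double-coset/bottom-row dictionary and the explicit parabolic generator are in place, the only other delicate point being the $N=4$ anomaly in the fan-width formula.
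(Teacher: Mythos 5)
The paper does not prove this theorem at all: it is quoted from Cho, Koo and Park \cite{Ch-Ko-Pa} as a black box, so there is no internal argument to compare yours against. Your proof is correct and is essentially the standard derivation (presumably also the one in the cited source): identify cusps with the double cosets $\Gamma_1(N)\backslash\SLZ/\Gamma_\infty$, encode a left coset $\Gamma_1(N)g$ by the bottom row of $g$ modulo $N$, let $\Gamma_\infty$ act by $(c,d)\mapsto\pm(c,ck+d)$, and translate back to the numerator of the cusp via $d\equiv a^{-1}\pmod{(c,N)}$, which yields (i) since $\{a+nc \bmod N\}=a+(c,N)\Z/N\Z$. You also handle correctly the one genuinely delicate point in (ii): that quotienting the pair $(s\bmod N/c,\ a\bmod c)$ by the \emph{simultaneous} sign agrees with the theorem's coordinatewise sign conventions, because a unit $s$ satisfies $s\equiv -s\pmod{N/c}$ exactly when $N/c\mid 2$, i.e.\ $c\in\{N/2,N\}$; and the CRT construction of a coprime lift $(x_{c,i},y_{c,j})$ (fix $y$ with $(y,N)=1$, then solve $x\equiv cs\pmod N$, $x\equiv 1\pmod y$) is sound. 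The width computation in (iii) via $PT^kP^{-1}=I+k\bigl(\begin{smallmatrix}-ac&a^2\\-c^2&ac\end{smallmatrix}\bigr)$, the prime-by-prime reduction of $N\mid ack$, $N\mid c^2k$ to $\tfrac{N}{(c,N)}\mid k$, and the observation that the $-$ sign forces $N\mid4$ and only helps when $N=4$, $(c,4)=2$, all check out. One cosmetic caveat: the statement as transcribed in the paper contains typos (e.g.\ $s_{c,1}$ and $s_{c',i'}$ where $s_{c,i}$ and $s_{c,i'}$ are meant, and ``$c\in\Z$'' for ``$c'\in\Z$''); your reading of the intended equivalence relations is the correct one.
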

In this theorem, it is understood as usual that the fraction $\frac{\pm1}{0}$
corresponds to $\infty$.

Robins \cite{Ro94}\omycite{Ro94} has calculated the invariant order of
$\eta_{\delta,g}(\tau)$ at any cusp.
This gives a method for calculating the invariant
order at any cusp of a generalized eta-product.
\begin{theorem}[\cite{Ro94}\omycite{Ro94}]
\omylabel{thm:cuspord1}
The order at the cusp $\zeta=\frac{a}{c}$ (assuming $(a,c)=1$) of the
generalized eta-function $\eta_{\delta,g}(\tau)$ (defined in \omyeqn{Geta}
and assuming $0 < g < \delta$)
is
\beq
\ord(\eta_{\delta,g}(\tau);\zeta) = \frac{\varepsilon^2}{2\delta}\,
P_2\left(\frac{ag}{\varepsilon}\right),
\omylabel{eq:ordetadg}
\eeq
where $\varepsilon=(\delta,c)$.
\end{theorem}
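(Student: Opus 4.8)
The plan is to reduce the order formula to the transformation theory of Siegel functions (equivalently Klein forms), for which the corresponding data at the cusp $\infty$ and the behaviour under $\SLZ$ are classical; compare \cite{Ro94} and \cite{Ra}. Recall, for $(a_1,a_2)\in\mathbb{Q}^2\setminus\Z^2$ and $q=e^{2\pi i\tau}$, the Siegel function
\[
g_{(a_1,a_2)}(\tau)= -e^{\pi i a_2(a_1-1)}\, q^{\frac12 P_2(a_1)}\prod_{n\ge 0}\bigl(1-q^{n+a_1}e^{2\pi i a_2}\bigr)\prod_{n\ge 1}\bigl(1-q^{n-a_1}e^{-2\pi i a_2}\bigr).
\]
First I would record the identity
\[
\eta_{\delta,g}(\tau) = -\,g_{(g/\delta,\,0)}(\delta\tau),
\]
which is immediate from comparing products (for $g=\delta/2$ one reads the product in \eqref{eq:Geta} over the classes $+g$ and $-g$ with multiplicity, consistent with the half-integer exponents permitted in \eqref{r-dg}). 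I would also use two standard facts: (a) as $\tau\to i\infty$, $g_{(a_1,a_2)}(\tau)\sim C\, q^{\frac12 P_2(a_1)}$ for a nonzero constant $C$, so the invariant order of $g_{(a_1,a_2)}$ at $\infty$ is $\tfrac12 P_2(a_1)$ (the value is $\tfrac12 P_2(0)=\tfrac1{12}$ in the degenerate case $a_1\in\Z$, $a_2\notin\Z$); and (b) for $\gamma\in\SLZ$, $g_{(a_1,a_2)}(\gamma\tau)=(\text{root of unity})\cdot g_{(a_1,a_2)\gamma}(\tau)$, where $(a_1,a_2)\gamma$ denotes the row-vector--matrix product.

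The cusp computation then goes as follows. Fix $\zeta=\frac{a}{c}$ with $(a,c)=1$, choose $\gamma=\abcdMAT\in\SLZ$ with $\gamma\infty=\zeta$, and recall that $\ord(\eta_{\delta,g};\zeta)$ is the leading exponent of $\eta_{\delta,g}(\gamma\tau)$ in powers of $q=e^{2\pi i\tau}$. Put $\sigma=\MAT{\delta}{0}{0}{1}$, so $\eta_{\delta,g}(\gamma\tau)=-\,g_{(g/\delta,0)}(\sigma\gamma\tau)$. The crucial step is an Hermite-type factorization
\[
\sigma\gamma=\MAT{\delta a}{\delta b}{c}{d}=\gamma' U,\qquad \gamma'\in\SLZ,\quad U=\MAT{\varepsilon}{\beta}{0}{\delta/\varepsilon},
\]
valid because $(a,c)=1$ forces the gcd of the first column $(\delta a,c)$ of $\sigma\gamma$ to equal $\varepsilon:=(\delta,c)$ (a prime dividing $c$ cannot divide $a$); here $\beta$ is chosen with $\delta\mid d\varepsilon-c\beta$ so that $\gamma'=\sigma\gamma\,U^{-1}\in\SLZ$, and one reads off that the $(1,1)$-entry of $\gamma'$ is $\delta a/\varepsilon$. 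Since $\sigma\gamma\tau=\gamma'(U\tau)$ with $U\tau=\tfrac{\varepsilon^2\tau+\varepsilon\beta}{\delta}$, fact (b) gives $\eta_{\delta,g}(\gamma\tau)=(\text{root of unity})\cdot g_{(g/\delta,0)\gamma'}(U\tau)$, and the first coordinate of $(g/\delta,0)\gamma'$ is $\tfrac{g}{\delta}\cdot\tfrac{\delta a}{\varepsilon}=\tfrac{ag}{\varepsilon}$. By fact (a), the leading term of $g_{(g/\delta,0)\gamma'}(U\tau)$ is a nonzero constant times $\bigl(e^{2\pi i(U\tau)}\bigr)^{\frac12 P_2(ag/\varepsilon)}$, and since $e^{2\pi i(U\tau)}$ is a constant times $q^{\varepsilon^2/\delta}$, this is a nonzero constant times $q^{\frac{\varepsilon^2}{2\delta}P_2(ag/\varepsilon)}$. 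Reading off the leading exponent yields $\ord(\eta_{\delta,g}(\tau);\zeta)=\tfrac{\varepsilon^2}{2\delta}P_2(ag/\varepsilon)$, i.e.\ \eqref{eq:ordetadg}. (The same argument covers $c=0$, i.e.\ $\zeta=\infty$, where $\varepsilon=\delta$ and the formula reduces to the leading exponent $\tfrac{\delta}{2}P_2(g/\delta)$ visible in \eqref{eq:Geta}.)

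The hard part is fact (b): establishing the Siegel transformation law precisely, and in particular that the multiplier is a unimodular constant, hence irrelevant for orders. One must also dispatch the degenerate case of fact (a): when $ag/\varepsilon\in\Z$, the transformed parameter $(g/\delta,0)\gamma'$ nevertheless lies in $\mathbb{Q}^2\setminus\Z^2$ --- because $(g/\delta,0)\notin\Z^2$ and $\gamma'\in\mathrm{GL}_2(\Z)$ --- so its second coordinate is non-integral, the relevant Siegel function is non-vanishing at its leading term, and $P_2$ consistently takes the value $\tfrac16$ there. An alternative route avoiding Siegel functions is to apply the Jacobi triple product, rewriting $\eta_{\delta,g}$ as a power of $q$ times $\eta(\delta\tau)^{-1}$ times a classical theta series, and then to combine the $\SLZ$-transformations of $\eta$ and of the theta series; the matrix factorization (and the resulting $\varepsilon=(\delta,c)$) is identical. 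In any event the statement is \cite{Ro94}, which one could also simply cite.
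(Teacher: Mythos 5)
Your argument is correct, but note that the paper does not prove Theorem \thm{cuspord1} at all: it is quoted from Robins \cite{Ro94}, so there is no internal proof to compare against. What you supply is the standard derivation, and it is essentially the route Robins himself takes (generalized eta-functions as specializations of Siegel functions/Klein forms, whose $\SLZ$-behaviour is classical). The three pillars all check out: the identification $\eta_{\delta,g}(\tau)=-g_{(g/\delta,0)}(\delta\tau)$ is a direct product comparison with \eqn{Geta}; the Hermite factorization is legitimate since $(\delta a,c)=(\delta,c)=\varepsilon$ follows from $(a,c)=1$ by comparing $p$-adic valuations at primes dividing $c$, and the congruence $c\beta\equiv d\varepsilon\pmod{\delta}$ is solvable because $(c,\delta)=\varepsilon$ divides $d\varepsilon$; and the resulting $(1,1)$-entry $\delta a/\varepsilon$ of $\gamma'$ produces exactly the argument $ag/\varepsilon$ in $P_2$, with the scaling $e^{2\pi i(U\tau)}=\mathrm{const}\cdot q^{\varepsilon^2/\delta}$ supplying the prefactor $\varepsilon^2/\delta$. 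Two points you rightly flag but should make fully explicit if this were written out: first, your fact (a) uses the \emph{periodic} $P_2$ (as the paper defines it), whereas the displayed product formula for $g_{(a_1,a_2)}$ only exhibits the leading exponent $\tfrac12 P_2(a_1)$ directly when $0<a_1<1$; since $(g/\delta,0)\gamma'$ generally has first coordinate outside $[0,1)$, one must invoke the quasi-periodicity $g_{(a_1+1,a_2)}=(\text{root of unity})\,g_{(a_1,a_2)}$ to reduce. Second, one needs the leading coefficient to be nonzero so that the order is exactly, not merely at least, the stated value; your treatment of the degenerate case $ag/\varepsilon\in\Z$ (where $(g/\delta,0)\gamma'\notin\Z^2$ forces the second coordinate to be non-integral) covers precisely the situation where this could fail. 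With those caveats the proof is complete; citing \cite{Ro94}, as the paper does, is of course also sufficient.
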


\subsubsection*{An algorithm for proving generalized eta-product identities} \hphantom{X}

        \vskip 10pt\noindent
We note that the analog of Theorem \thm{valcor} holds for generalized
eta-products which are modular functions on $\Gamma_1(N)$, and follows easily
from the valence formula \omyeqn{valform}.
\begin{theorem}[{\cite[Cor.2.5]{Fr-Ga19}\omycite{Fr-Ga19}}]
\omylabel{thm:valcor1}
Let $f_1(\tau)$, $f_2(\tau)$, \dots, $f_n(\tau)$ be generalized eta-products that
are modular functions on $\Gamma_1(N)$. Let $\mathcal{S}_N$ be a set of inequivalent
cusps for $\Gamma_1(N)$. Define the constant
\beq
B = \sum_{\substack{s\in\mathcal{S}_N\\s\ne \infty}}
        \mbox{min}
        (\left\{\ORD(f_j,s,\Gamma_1(N))\,:\, 1 \le j \le n\right\} \cup \{0\}),
\omylabel{eq:Bdef1}
\eeq
and consider
\beq
g(\tau) := \alpha_1 f_1(\tau) + \alpha_2 f_2(\tau) + \cdots + \alpha_n f_n(\tau) + 1,
\omylabel{eq:gdef1}
\eeq
where each $\alpha_j\in\mathbb{C}$. Then
$$
g(\tau) \equiv 0
$$
if and only if
\beq
\ORD(g(\tau), \infty, \Gamma_1(N)) > -B.
\omylabel{eq:ORDBineq1}
\eeq
\end{theorem}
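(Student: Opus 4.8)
The plan is to imitate, almost line for line, the proof of Theorem~\ref{thm:valcor}; the only genuinely new point is the observation that keeps that argument afloat, namely that generalized Dedekind eta-functions have no zeros or poles on $\uhp$. Indeed, from the product representation \eqref{eq:Geta} the function $\eta_{\delta,g}(\tau)$ equals a power of $q$ times $\prod_{m\equiv\pm g\pmod{\delta}}(1-q^m)$, and for $\tau\in\uhp$ we have $|q|<1$, so every factor $1-q^m$ with $m\ge1$ is nonzero and the product converges locally uniformly. Hence every generalized eta-product \eqref{eq:fdef} is holomorphic and nonvanishing on $\uhp$, and therefore so is the finite linear combination $g(\tau)$ of \eqref{eq:gdef1}; moreover $g$ is a modular function on $\Gamma_1(N)$, since each $f_j$ is one by hypothesis and the constant $1$ trivially is. Thus $g$ can have poles only at the cusps.

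First I would dispose of the trivial direction: if $g\equiv0$, then $\ORD(g,\infty,\Gamma_1(N))=+\infty$ while $B$ is a finite integer, so \eqref{eq:ORDBineq1} holds. For the converse I would argue by contraposition, assuming $g\not\equiv0$. Then $g$ is a nonzero modular function on $\Gamma_1(N)$, and the valence formula \eqref{eq:valform}, applied with a fundamental set whose cusps are exactly the set $\mathcal{S}_N$ furnished by Theorem~\ref{thm:chokoopark}, reads
$$
\sum_{z\in\mathscr{F}\cap\uhp}\ORD(g,z,\Gamma_1(N))
\;+\;\ORD(g,\infty,\Gamma_1(N))
\;+\;\sum_{\substack{s\in\mathcal{S}_N\\ s\ne\infty}}\ORD(g,s,\Gamma_1(N))\;=\;0.
$$
Because $g$ is holomorphic on $\uhp$, the first sum is $\ge0$. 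At a cusp $s\ne\infty$ the expansion of $g$ at $s$ is the sum of those of $\alpha_1f_1,\dots,\alpha_nf_n$ and of the constant $1$; since all of these are modular on the same group $\Gamma_1(N)$ they share the fan width $\kappa(\Gamma_1(N);s)$ at $s$, so the leading exponent of the sum is at least the minimum of the individual leading exponents, i.e.
$$
\ORD(g,s,\Gamma_1(N))\;\ge\;\min\bigl(\{\ORD(f_j,s,\Gamma_1(N))\,:\,1\le j\le n\}\cup\{0\}\bigr),
$$
the $\{0\}$ coming from the constant $1$, which has order $0$ at every cusp. Summing these bounds over $s\in\mathcal{S}_N\setminus\{\infty\}$ produces precisely the constant $B$ of \eqref{eq:Bdef1}, and feeding everything into the valence identity yields $\ORD(g,\infty,\Gamma_1(N))\le-B$, contradicting \eqref{eq:ORDBineq1}. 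Hence $g\equiv0$, which completes the equivalence.

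I do not expect any real obstacle here: once the non-vanishing of the generalized eta-functions on $\uhp$ is recorded, the proof is a routine application of the valence formula, identical in spirit to Theorem~\ref{thm:valcor}. The only points that require a little care are bookkeeping ones: correctly tracking the constant term (which is exactly the role of the adjoined $\{0\}$ in \eqref{eq:Bdef1}), noting that the fan width at $\infty$ is $1$ by Theorem~\ref{thm:chokoopark}(iii) so that $\ORD(g,\infty,\Gamma_1(N))$ is simply the $q$-order of the expansion of $g$ at $\infty$, and checking that the cusp list $\mathcal{S}_N$ supplied by Theorem~\ref{thm:chokoopark} is genuinely complete, so that no negative cusp contribution is missed in the valence sum.
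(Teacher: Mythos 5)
Your proof is correct and follows exactly the route the paper intends: the paper gives no written proof, remarking only that the result ``follows easily from the valence formula'' (citing \cite[Cor.~2.5]{Fr-Ga19}), and your argument supplies precisely those details in the same way as the proof of Theorem \ref{thm:valcor}. One harmless slip: the clause ``and therefore so is the finite linear combination $g(\tau)$'' should assert only holomorphy, not nonvanishing, of $g$ on $\uhp$ (a sum of nonvanishing functions can vanish), but since your argument uses only the holomorphy of $g$ in the interior, nothing is affected.
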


The algorithm for proving
generalized eta-product identities is completely analogous to the method
for proving eta-product identities described in Section \subsect{etaprods}.
To prove an identity in the form
$$
\alpha_1 f_1(\tau) + \alpha_2 f_2(\tau) + \cdots + \alpha_n f_n(\tau) + 1
=0,
$$
the algorithm simply involves calculating the constant $B$ in \omyeqn{Bdef1}
and then calculating enough coefficients to show that the inequality
\omyeqn{ORDBineq1} holds.
A more complete description is given in \cite{Fr-Ga19}\omycite{Fr-Ga19}.

The third author has written a \textsc{MAPLE} package
called \texttt{thetaids} which implements this algorithm. See
\begin{center}
\url{http://qseries.org/fgarvan/qmaple/thetaids/}
\end{center}


\section{The rank parity function modulo powers of $5$}
\omylabel{sec:rankparity5}
\subsection{A Generating Function}
\omylabel{subsec:genfunc}

In this section we prove an identity for the generating function of
$$
a_f(5n-1) + a_f(n/5),
$$
where it is understood that $a_f(n)=0$ if $n$ is not a non-negative integer.
Our proof depends on some results of Mao \cite{Mao13}\omycite{Mao13} who found $5$-dissection
results for the rank modulo $10$.
\begin{theorem}
\omylabel{thm:af5thm}
\beq
\sum_{n=0}^\infty (a_f(5n-1) + a_f(n/5)) q^n =
\frac{J_2^4J_{10}^2}
     {J_1J_4^3J_{20}}
    -4q\frac{J_1^2J_4^3J_5J_{20}}
            {J_2^5J_{10}}.
\omylabel{eq:af5id}
\eeq
\end{theorem}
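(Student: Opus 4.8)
The plan is to reduce \eqref{eq:af5id} to an identity between $q$-series that can be verified by the theory of modular (and generalized eta-product) functions. The left side involves the coefficients $a_f(n)$ of Ramanujan's mock theta function $f(q)$, so the first task is to replace $f(q)$ by something with a usable dissection. Here I would invoke Mao's results \cite{Mao13} on the $5$-dissection of the rank generating function modulo $10$: since $a_f(n) = N_e(n) - N_o(n)$ is exactly the rank generating function $R(-1;q) = \sum_n N(m,n)(-1)^m q^n$, Mao's $5$-dissection of $R(-1;q)$ expresses $\sum_n a_f(n) q^n$ as an explicit sum of five pieces indexed by residues of $n$ modulo $5$, each a product/quotient of eta-functions and generalized eta-functions (or theta quotients) in $q^5$. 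Extracting the arithmetic progressions $n\equiv -1\pmod 5$ (from $a_f(5n-1)$) and pulling back, and separately handling the $a_f(n/5)$ term (which just contributes $\sum_n a_f(n) q^{5n}$, i.e.\ $f(q^5)$), would turn the left side of \eqref{eq:af5id} into a concrete finite combination of eta-products and generalized eta-products.

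Next I would massage both sides into a single homogeneous identity of the shape treated by the algorithms in Section~\ref{sec:modfuncs}. After replacing $q^5$-series coming from Mao's dissection and clearing denominators, one gets an identity of the form
\[
\alpha_1 F_1(\tau) + \cdots + \alpha_k F_k(\tau) = 0,
\]
where each $F_j$ is an eta-product or a generalized Dedekind eta-product of some level $N$ (here the $J$-notation on the right of \eqref{eq:af5id} already suggests level $20$, with the $U_5$-type passage forcing level $100$ before projection). At this point one checks modularity via Theorem~\ref{thm:etamodthm} (or Theorem~\ref{thm:gepmodfunc} for the generalized factors), lists a complete set of inequivalent cusps with fan widths via Theorem~\ref{thm:chualang} and Lemma~\ref{lem:fanw} (or Theorem~\ref{thm:chokoopark}), computes the order of every $F_j$ at every cusp $\ne\infty$ via Theorem~\ref{thm:ordthm} (or Theorem~\ref{thm:cuspord1}), forms the bound $B$ as in \eqref{eq:Bdef}/\eqref{eq:Bdef1}, and finally verifies numerically that $\ORD\bigl(\sum_j \alpha_j F_j,\infty,\Gamma\bigr) > -B$. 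Theorem~\ref{thm:valcor} (or Theorem~\ref{thm:valcor1}) then forces the combination to vanish identically, which is \eqref{eq:af5id}. All of this is mechanical once the series are in hand, and is exactly what the \texttt{ETA} and \texttt{thetaids} packages automate; the earlier worked example around \eqref{eq:Upalgeg} is the template.

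The main obstacle is not the modular-forms bookkeeping but the first step: correctly extracting $a_f(5n-1)$ from Mao's $5$-dissection and recognizing the resulting $q$-series as a genuine (generalized) eta-product combination. Mao's dissection is stated modulo $10$ and mixes rank classes, so one must track signs carefully and confirm that the ``mock'' contributions cancel in the even-minus-odd combination $a_f(n)$, leaving a true modular object — this is the reason the companion term $a_f(n/5)$ appears, and verifying that the two together (and not either alone) give an eta-quotient is the delicate point. Once the right-hand side of \eqref{eq:af5id} is confirmed to match the dissected series as formal Laurent series up to enough terms to beat the valence bound, the rest follows by the stated theorems. A secondary, purely computational hurdle is choosing the right level $N$: one wants the smallest $N$ for which every term is modular on $\Gamma_0(N)$ (or $\Gamma_1(N)$), since $B$ and the number of cusps grow with $N$; the level-$20$ presentation on the right of \eqref{eq:af5id} indicates $N=20$ suffices after the $U_5$-projection from level $100$.
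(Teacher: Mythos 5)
Your proposal follows essentially the same route as the paper: it invokes Mao's $5$-dissection results to dissect $f(q)$ (via Watson's Lambert-series representation), correctly identifies that the leftover mock piece $f(q^5)$ is exactly what the companion term $\sum_n a_f(n/5)q^n$ cancels, and then finishes by verifying the resulting generalized eta-product identity on $\Gamma_1(20)$ with the valence-formula algorithm. The paper fills in the one step you leave implicit — solving the $5\times5$ linear system coming from the $5$-dissection of $1/J_1$ to isolate the $n\equiv 4\pmod 5$ component explicitly — but the strategy is the same.
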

\begin{proof}
From Watson \cite[p.64]{Wa36a}\omycite{Wa36a} we have           
\begin{align}
f(q)&=\frac{2}{(q;q)_\infty}\sum_{n=-\infty}^{\infty}\frac{(-1)^nq^{n(3n+1)/2}}{1+q^n}.
\omylabel{eq:fqid}
\end{align}

We find that
\begin{align}   
\sum_{n=-\infty}^{\infty}\frac{(-1)^nq^{n(3n+1)/2+4n}}{1+q^{5n}}&
=\sum_{n=-\infty}^{\infty}\frac{(-1)^nq^{n(3n+1)/2}}{1+q^{5n}},
\omylabel{eq:lam5ids}\\
\sum_{n=-\infty}^{\infty}\frac{(-1)^nq^{n(3n+1)/2+3n}}{1+q^{5n}}&
=\sum_{n=-\infty}^{\infty}\frac{(-1)^nq^{n(3n+1)/2+n}}{1+q^{5n}}.
\nonumber
\end{align}
By \cite[Lemma 3.1]{Mao13}\omycite{Mao13} we have
\begin{align}    
\sum_{n=-\infty}^{\infty}\frac{(-1)^nq^{n(3n+1)/2}}{1+q^{5n}}
&=P(q^5,-q^5;q^{25})-\frac{P(q^{10},-q^5;q^{25})}{q^3}
+\frac{(q;q)_\infty}{J_{25}}\sum_{n=-\infty}^{\infty}\frac{(-1)^nq^{75n(n+1)/2+5}}{1+q^{25n+5}},
\omylabel{eq:mao1}\\
\sum_{n=-\infty}^{\infty}\frac{(-1)^nq^{n(3n+1)/2+n}}{1+q^{5n}}
&=P(q^{10},-q^{10};q^{25})-q^3P(q^5,-q^{10};q^{25})
-\frac{(q;q)_\infty}{J_{25}}\sum_{n=-\infty}^{\infty}\frac{(-1)^nq^{75n(n+1)/2+8}}{1+q^{25n+10}},
\omylabel{eq:mao2}\\
\sum_{n=-\infty}^{\infty}\frac{(-1)^nq^{n(3n+1)/2+2n}}{1+q^{5n}}
&=\frac{P(q^5,-1;q^{25})}{q^6}-\frac{P(q^{10},-1;q^{25})}{q^9}
-\frac{(q;q)_\infty}{J_{25}}
\sum_{n=-\infty}^{\infty}\frac{(-1)^nq^{25n(3n+1)/2-1}}{1+q^{25n}},
\omylabel{eq:mao3}
\end{align}
where
\beq
P(a,b;q)=\frac{[a,a^2;q]_\infty (q;q)_\infty^2}{[b/a,ab,b;q]_\infty}.
\omylabel{eq:Pabqdef}
\eeq

From \omyeqn{fqid}-\omyeqn{mao3}, and noting that $P(q^5,-q^5;q^{25})=P(q^{10},-q^{10};q^{25})$ 
we have
\begin{align}
f(q)=&\frac{2}{(q;q)_\infty}\sum_{n=-\infty}^{\infty}\frac{(-1)^nq^{n(3n+1)/2}}{1+q^n}
\omylabel{eq:flamid}\\
=&\frac{2}{(q;q)_\infty}\sum_{n=-\infty}^{\infty}\frac{(-1)^nq^{n(3n+1)/2}(1-q^n+q^{2n}-q^{3n}+q^{4n})}{1+q^{5n}}
\nonumber\\
=&\frac{2}{(q;q)_\infty}\sum_{n=-\infty}^{\infty}\frac{(-1)^nq^{n(3n+1)/2}(2-2q^n+q^{2n})}{1+q^{5n}}
\nonumber\\
=&\frac{2}{J_1}\bigg\{2q^3P(q^5,-q^{10};q^{25})-\frac{2P(q^{10},-q^5;q^{25})}{q^3}+\frac{P(q^5,-1;q^{25})}{q^6}-\frac{P(q^{10},-1;q^{25})}{q^9}\bigg\}
\nonumber\\
&+\frac{4}{J_{25}}\sum_{n=-\infty}^{\infty}\frac{(-1)^nq^{75n(n+1)/2+5}}{1+q^{25n+5}}
+\frac{4}{J_{25}}\sum_{n=-\infty}^{\infty}\frac{(-1)^nq^{75n(n+1)/2+8}}{1+q^{25n+10}}
-\frac{1}{q}f(q^{25}).
\nonumber
\end{align}

We let 
\beq            
g(q)=\frac{2}{J_1}\bigg\{2q^3P(q^5,-q^{10};q^{25})-\frac{2P(q^{10},-q^5;q^{25})}{q^3}
+\frac{P(q^5,-1;q^{25})}{q^6}-\frac{P(q^{10},-1;q^{25})}{q^9}\bigg\},
\omylabel{eq:gq1}
\eeq            
and
write the $5$-dissection of $g(q)$ as
\begin{align}
g(q)=g_0(q^5) +q\,g_1(q^5)+q^2\,g_2(q^5)+q^3\,g_3(q^5)+q^4\,g_4(q^5).
\omylabel{eq:gq2}
\end{align}

From \omyeqn{fqid}, \omyeqn{gq1} and \omyeqn{gq2}, replacing $q^5$ by $q$, we have
\beq                  
\sum_{n=0}^\infty a_f(5n+4) q^n =-\frac{1}{q}f(q^5)+g_4(q),
\omylabel{eq:f4g}
\eeq               
after dividing both sides by $q^4$ and replacing $q^5$ by $q$.

The 5-dissection of $J_1$ is well-known
\beq               
J_1=J_{25}\bigg(B(q^5)-q-q^2\frac{1}{B(q^5)}\bigg),
\omylabel{eq:gq3}
\eeq               
where
\beqs 
B(q)=\frac{J_{2,5}}{J_{1,5}}.
\eeqs
See for example \cite[Lemma (3.18)]{Ga88b}\omycite{Ga88b}.

From \omyeqn{gq1}, \omyeqn{gq2} and \omyeqn{gq3}
\begin{align}
&J_{25}\,(g_0(q^5) +q\,g_1(q^5)+q^2\,g_2(q^5)+q^3\,g_3(q^5)
+q^4\,g_4(q^5)) \bigg(B(q^5)-q-q^2\frac{1}{B(q^5)}\bigg)
\omylabel{eq:gq4}\\
&=4q^3P(q^5,-q^{10};q^{25})-\frac{4P(q^{10},-q^5;q^{25})}{q^3}+\frac{2P(q^5,-1;q^{25})}{q^6}
-\frac{2P(q^{10},-1;q^{25})}{q^9}.
\nonumber            
\end{align}

By expanding the left side of \omyeqn{gq4} and comparing both sides 
according to the residue of the exponent of $q$ modulo 5, 
we obtain 5 equations:
\begin{align}
&B(q)g_0-q^5g_4-\frac{q^5}{B(q)}g_3=0,
\omylabel{eq:eq1}\\
&B(q)g_1-g_0-\frac{q}{B(q)}g_4=-\frac{2P(q^{2},-1;q^5)}{q^{2}J_{5}},
\omylabel{eq:eq2}\\
&B(q)g_2-g_1-\frac{1}{B(q)}g_0=-\frac{4P(q^{2},-q;q^5)}{qJ_{5}},
\omylabel{eq:eq3}\\
&B(q)g_3-g_2-\frac{1}{B(q)}g_1=\frac{4P(q,-q^{2};q^5)}{J_{5}},
\omylabel{eq:eq4}\\
&B(q)g_4-g_3-\frac{1}{B(q)}g_2=\frac{2P(q,-1;q^5)}{q^{2}J_{5}},
\omylabel{eq:eq5}
\end{align}
where $g_j=g_j(q)$ for $0\le j \le4$.

Solving these equations we find that
\begin{align}
g_4(q)
&=\frac{1}{J_5(B^5-11q-q^2/B^5)}\bigg(\frac{2}{q^2}X_2B^4
                            -\frac{2}{q}X_1B^{-4}+4X_4B^3+4X_3B^{-3}
\omylabel{eq:g4q}\\
& -\frac{8}{q}X_3B^2+8qX_4B^{-2}-\frac{6}{q^2}X_1B-\frac{6}{q}X_2B^{-1}\bigg),
\nonumber             
\end{align}
where $B:=B(q)$ and
\begin{align*}
X_1=&P(q^2,-1;q^5)=\frac{q^2J_{1,10}J_{2,10}^3J_{3,10}^3J_{5,10}^2}{2J_{10}^6J_{4,10}},\quad
X_2=P(q,-1;q^5)=\frac{qJ_{1,10}^3J_{3,10}J_{4,10}^3J_{5,10}^2}{2J_{10}^6J_{2,10}},\\
X_3=&P(q^2,-q;q^5)=\frac{qJ_{1,10}^3J_{3,10}^2J_{4,10}^2J_{5,10}}{J_{10}^6},\quad
X_4=P(q,-q^2;q^5)=\frac{J_{1,10}^2J_{2,10}^2J_{3,10}^3J_{5,10}}{J_{10}^6}.
\end{align*}

The following identity is also well-known
\beq                  
\frac{J_1^6}{J_5^6}=B^5-11q-q^2\frac{1}{B^5}.
\omylabel{eq:idb}
\eeq               
See for example \cite[Lemma (2.5)]{Hi-Hu81}\omycite{Hi-Hu81}.

By \omyeqn{g4q} and \omyeqn{idb}, we have 
\begin{align}
g_4(q)=&\frac{J_{1,10}^3J_{3,10}J_{4,10}^3J_{5,10}^2J_{2,5}^4J_5^5}{qJ_{2,10}J_{1,5}^4J_{10}^6J_1^6}
-\frac{qJ_{1,10}J_{2,10}^3J_{3,10}^3J_{5,10}^2J_{1,5}^4J_5^5}{J_{4,10}J_{2,5}^4J_{10}^6J_1^6}
+4\frac{J_{1,10}^2J_{2,10}^2J_{3,10}^3J_{5,10}J_{2,5}^3J_5^5}{J_{1,5}^3J_{10}^6J_1^6}
\omylabel{eq:g4id}\\
&+4\frac{qJ_{1,10}^3J_{3,10}^2J_{4,10}^2J_{5,10}J_{1,5}^3J_5^5}{J_{2,5}^3J_{10}^6J_1^6}
-8\frac{J_{1,10}^3J_{3,10}^2J_{4,10}^2J_{5,10}J_{2,5}^2J_5^5}{J_{1,5}^2J_{10}^6J_1^6}
+8\frac{qJ_{1,10}^2J_{2,10}^2J_{3,10}^3J_{5,10}J_{1,5}^2J_5^5}{J_{2,5}^2J_{10}^6J_1^6}\nonumber\\
&-3\frac{J_{1,10}J_{2,10}^3J_{3,10}^3J_{5,10}^2J_{2,5}J_5^5}{J_{4,10}J_{1,5}J_{10}^6J_1^6}
-3\frac{J_{1,10}^3J_{3,10}J_{4,10}^3J_{5,10}^2J_{1,5}J_5^5}{J_{2,10}J_{2,5}J_{10}^6J_1^6}.\nonumber
\end{align}
We prove 
\beq
g_4(q)
=-4\frac{J_1^2J_4^3J_5J_{20}}{J_2^5J_{10}}+\frac{1}{q}\frac{J_2^4J_{10}^2}{J_1J_4^3J_{20}},
\omylabel{eq:g4id2}
\eeq
using the algorithm described in Section \subsect{geneta}.%
We first use \omyeqn{g4id} to rewrite \omyeqn{g4id2} as the following
modular function identity
for generalized eta-products on $\Gamma_1(20)$. 
\begin{align}
0=&1-{\frac{\eta_{{10,1}}^{6}\eta_{{10,4}}^{4}}{\eta_{{10,2}}^{4}\eta_{{10,3}}^{6}}}
+4\,{\frac{\eta_{{10,2}}^{2}\eta_{{10,3}}}{\eta_{{10,4}}^{2}\eta_{{10,5}}}}
+4\,{\frac{\eta_{{10,1}}^{7}\eta_{{10,4}}^{6}}{\eta_{{10,2}}^{6}\eta_{{10,3}}^{6}\eta_{{10,5}}}}
-8\,{\frac{\eta_{{10,1}}^{2}\eta_{{10,4}}}{\eta_{{10,2}}\eta_{{10,3}}\eta_{{10,5}}}}
+8\,{\frac{\eta_{{10,1}} ^{5}\eta_{{10,4}}^{3}}{\eta_{{10,2}}^{3}\eta_{{10,3}}^{4}\eta_{{10,5}}}}
\omylabel{eq:bigthetaid}\\
&-3\,{\frac{\eta_{{10,1}}\eta_{{10,2}}}{\eta_{{10,3}}\eta_{{10,4}}}}-3\,{\frac{\eta_{{5,1}}^{5}}{\eta_{{5,2}}^{5}}}
+4\,{\frac{\eta_{{20,1}}^{9}\eta_{{20,3}}^{3}\eta_{{20,4}}^{7}\eta_{{20,6}}^{4}\eta_{{20,7}}^{3}\eta_{{20,8}}^{3}\eta_{{20,9}}^{9}}{\eta_{{20,10}}^{2}}}
-{\frac{\eta_{{20,1}}^{6}\eta_{{20,2}}^{6}\eta_{{20,4}}^{7}\eta_{{20,6}}^{10}\eta_{{20,8}}^{3}\eta_{{20,9}}^{6}\eta_{{20,10}}^{2}}{\eta_{{20,5}}^{4}}}.
\nonumber
\end{align}
We use Theorem \thm{gepmodfunc} to check each that each generalized eta-product 
is
a modular function on $\Gamma_1(20)$. We then use Theorems \thm{chokoopark}
and \thm{cuspord1} to calculate the order of each generalized eta-product
at each cusp of $\Gamma_1(20)$. We calculate the constant in equation 
\omyeqn{Bdef1} to find that $B = 24$. We let $g(\tau)$ be the right side
of \omyeqn{bigthetaid} and easily show that $\ORD(g(\tau),\infty,\Gamma_1(20))> 24$.
The required identity follows by Theorem \thm{valcor1}.

From \omyeqn{f4g} and \omyeqn{g4id2} we have
\beq                  
\sum_{n=0}^\infty a_f(5n-1) q^n  + f(q^5) = q\,g_4(q)=
\frac{J_2^4J_{10}^2}{J_1J_4^3J_{20}}
-4q\,\frac{J_1^2J_4^3J_5J_{20}}{J_2^5J_{10}},
\omylabel{eq:f4gb}
\eeq
which is our result \omyeqn{af5id}.
\end{proof}
\subsection{A Fundamental Lemma}
\omylabel{subsec:fundlem5}

We need the following fundamental lemma,
whose proof follows easily from Theorem \thm{modeq}.
\begin{lemma}[A Fundamental Lemma]
\omylabel{lem:fun5}
Suppose $u=u(\tau)$, and $j$
is any integer. Then
\begin{align*}
{U_5}(u\,t^j) =-\sum_{l=0}^{4}\sigma_l(\tau)\,{U_5}(u\,t^{j+l-5}),
\end{align*}
where $t=t(\tau)$ is defined in \omyeqn{deft} and the $\sigma_j(\tau)$ are given 
in \omyeqn{sig0}--\omyeqn{sig4}.
\end{lemma}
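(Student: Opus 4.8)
The plan is to start from the modular equation \eqref{eq:modeq} of Theorem~\ref{thm:modeq} and apply the $U_5$ operator after multiplying by a suitable power of $t$. Recall that \eqref{eq:modeq} reads $t(\tau)^5 + \sum_{l=0}^4 \sigma_l(5\tau)\,t(\tau)^l = 0$. First I would multiply this identity through by $u(\tau)\,t(\tau)^{j-5}$, where $u=u(\tau)$ and $j$ is the given integer, to obtain
\begin{equation*}
u\,t^{\,j} + \sum_{l=0}^{4} \sigma_l(5\tau)\,u\,t^{\,j+l-5} = 0.
\end{equation*}
Now I would apply $U_5$ to both sides. Since $U_5$ is linear (directly from the definition \eqref{eq:Updeffls} on formal Laurent series), this gives
\begin{equation*}
{U_5}(u\,t^{\,j}) + \sum_{l=0}^{4} {U_5}\bigl(\sigma_l(5\tau)\,u\,t^{\,j+l-5}\bigr) = 0.
\end{equation*}

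The key step is then to pull each $\sigma_l(5\tau)$ out of the $U_5$ on the right. This uses the standard multiplicativity property of $U_p$: if $h$ is a function of $q$ and $F$ is a function of $q^p$, say $F(\tau)=G(p\tau)$, then ${U_p}\bigl(h\cdot(G\circ(p\,\cdot))\bigr) = G\cdot {U_p}(h)$; equivalently, on Laurent series, ${U_p}\bigl(\sum a(m)q^m \cdot \sum b(n) q^{pn}\bigr) = \sum_n b(n) q^n \cdot \sum_m a(pm) q^m$. Here each $\sigma_l(5\tau)$ is a polynomial in $t(5\tau)$, hence a power series in $q^5$ (as $t$ has order $1$ at $\infty$), so ${U_5}\bigl(\sigma_l(5\tau)\,u\,t^{j+l-5}\bigr) = \sigma_l(\tau)\,{U_5}(u\,t^{j+l-5})$. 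Substituting this into the displayed identity and rearranging yields
\begin{equation*}
{U_5}(u\,t^{\,j}) = -\sum_{l=0}^{4} \sigma_l(\tau)\,{U_5}(u\,t^{\,j+l-5}),
\end{equation*}
which is exactly the claimed recurrence.

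I expect the only subtlety — hardly an obstacle — to be bookkeeping about well-definedness: one must ensure that the compositions and products involved make sense as formal Laurent series, so that the $U_5$-linearity and the pull-out identity are legitimate. Since $\ord_q(t)=1>0$, every $u\,t^{\,k}$ is a genuine Laurent series whenever $u$ is, and $\sigma_l(5\tau)$ is a polynomial in $q^5$ with no negative powers, so the factorization argument applies verbatim. No convergence or modularity hypotheses on $u$ are needed; the statement is purely formal, which is why it is stated for arbitrary $u=u(\tau)$ and arbitrary integer $j$. This makes the lemma a clean algebraic consequence of Theorem~\ref{thm:modeq}, suitable for iteration in the inductive argument to follow.
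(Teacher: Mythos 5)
Your proposal is correct and follows exactly the paper's argument: multiply the modular equation \eqref{eq:modeq} by $u\,t^{j-5}$, apply $U_5$, and use linearity together with the standard pull-out property $U_p\bigl(G(p\tau)\,h(\tau)\bigr)=G(\tau)\,U_p(h)$ to extract each $\sigma_l(5\tau)$ as $\sigma_l(\tau)$. The paper states this in one line; you have simply supplied the routine details.
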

\begin{proof}
The result follows easily from \omyeqn{modeq} by multiplying both
sides by $u\,t^{j-5}$ and applying $U_5$.
\end{proof}

\begin{lemma}\omylabel{lem:ordtpk} 
Let $u=u(\tau)$, and $l\in \mathbb{Z}$. 
Suppose for $l\leq k\leq l+4$
there exist Laurent polynomials $p_{u,k}(t)\in \mathbb{Z}[t,t^{-1}]$
such that 
\begin{align} U_5(u\,t^k)=v\,p_{u,k}(t),
\omylabel{eq:U5tk}
\end{align} 
and 
\begin{align} 
ord_t(p_{u,k}(t))\geq \CL{\frac{k+s}{5}}, 
\omylabel{eq:ordtpk} 
\end{align}
for a fixed integer $s$, where $t=t(\tau)$ is defined in
\omyeqn{deft} and where $v=v(\tau)$.
Then there exists a  sequence of Laurent
polynomials $p_{u,k}(t)\in \mathbb{Z}[t,t^{-1}]$, $k\in \mathbb{Z}$,
such that \omyeqn{U5tk} and \omyeqn{ordtpk} hold for all
$k\in \mathbb{Z}$.  
\end{lemma}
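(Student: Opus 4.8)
The plan is to prove Lemma~\ref{lem:ordtpk} by a two-directional induction on $k$, using the Fundamental Lemma (Lemma~\ref{lem:fun5}) to propagate the identity \eqref{eq:U5tk} and the order bound \eqref{eq:ordtpk} from any block of five consecutive values of $k$ to all integers. The hypotheses give us \eqref{eq:U5tk} and \eqref{eq:ordtpk} for $l \le k \le l+4$; I will show that the same structure persists when $k$ is increased to $l+5$ and, symmetrically, when $k$ is decreased to $l-1$. Iterating, this covers all $k \in \mathbb{Z}$.

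\textbf{Upward step.} Suppose \eqref{eq:U5tk} and \eqref{eq:ordtpk} hold for $k, k+1, \dots, k+4$. Apply Lemma~\ref{lem:fun5} with $j = k+5$: then
$$
U_5(u\,t^{k+5}) = -\sum_{l=0}^{4} \sigma_l(\tau)\, U_5(u\, t^{k+l}).
$$
By the inductive hypothesis each $U_5(u\,t^{k+l}) = v\, p_{u,k+l}(t)$, so
$$
U_5(u\, t^{k+5}) = v \cdot \Lpar{-\sum_{l=0}^{4} \sigma_l(\tau)\, p_{u,k+l}(t)} =: v\, p_{u,k+5}(t).
$$
Since each $\sigma_l(\tau) = \sigma_l(\tau)$ is, by \eqref{eq:sig0}--\eqref{eq:sig4}, a polynomial in $t = t(\tau)$ with integer coefficients (note the $\sigma_j$ here are evaluated at $\tau$, matching the statement of Lemma~\ref{lem:fun5}), the bracketed expression lies in $\mathbb{Z}[t,t^{-1}]$, so $p_{u,k+5} \in \mathbb{Z}[t,t^{-1}]$. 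For the order bound: $\sigma_l(\tau)$ is a polynomial in $t$ whose lowest-degree term is degree $1$, so $\ord_t(\sigma_l) \ge 1$; combined with $\ord_t(p_{u,k+l}) \ge \CL{\tfrac{k+l+s}{5}}$ we get
$$
\ord_t\Lpar{\sigma_l(\tau)\, p_{u,k+l}(t)} \ge 1 + \CL{\frac{k+l+s}{5}}.
$$
Taking the minimum over $0 \le l \le 4$, the smallest contribution comes from $l = 0$, giving $\ord_t(p_{u,k+5}) \ge 1 + \CL{\tfrac{k+s}{5}} = \CL{\tfrac{k+s+5}{5}} = \CL{\tfrac{(k+5)+s}{5}}$, which is exactly \eqref{eq:ordtpk} for $k+5$.

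\textbf{Downward step.} Symmetrically, apply Lemma~\ref{lem:fun5} with $j = k-1$, which after rearranging the $l=5$ term (i.e.\ isolating $U_5(u\,t^{k-1})$ from $U_5(u\,t^{j})$ with the $t^{j}$-term corresponding to $l=5$) expresses $U_5(u\,t^{k-1})$ as $-\sigma_0(\tau)^{-1}$ times a combination of $U_5(u\,t^{k})$, \dots, $U_5(u\,t^{k+4})$ — but since $\sigma_0(\tau) = -t$ is not invertible in $\mathbb{Z}[t,t^{-1}]$ only up to a factor of $t^{-1}$, this is where care is needed. \textbf{The main obstacle} is exactly the downward direction: dividing by $\sigma_0(\tau) = -t$ introduces a factor $t^{-1}$, which must be absorbed so that $p_{u,k-1}$ still has integer coefficients and still satisfies the $\ord_t$ bound with the shift $k \mapsto k-1$. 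One checks that multiplying \eqref{eq:modeq} by $u\,t^{k-6}$ and applying $U_5$ gives $\sigma_0(\tau) U_5(u\,t^{k-1}) = -U_5(u\,t^{k+4}) - \sum_{l=1}^{4}\sigma_l(\tau)U_5(u\,t^{k-1+l})$; since $\sigma_0 = -t$ and each $\sigma_l$ for $l \ge 1$ is $t$ times a polynomial in $t$, both sides are divisible by $t$ in the appropriate sense, and the division leaves $p_{u,k-1} \in \mathbb{Z}[t,t^{-1}]$ with $\ord_t(p_{u,k-1}) \ge \CL{\tfrac{(k-1)+s}{5}}$, as a direct check of the ceiling arithmetic confirms. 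With both induction steps established, the lemma follows.
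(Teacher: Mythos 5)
Your proof is correct and follows essentially the same route as the paper: induction on $k$ in both directions using Lemma \ref{lem:fun5}, with the paper writing out only the upward step (which matches yours exactly, including the bound $\ord_t(\sigma_j)\ge 1$ and the ceiling arithmetic) and dismissing the downward step as ``similar.'' Two small points in your downward sketch: the correct multiplier of \eqref{eq:modeq} is $u\,t^{k-1}$, not $u\,t^{k-6}$ (though the displayed relation you then write is the right one), and the integrality worry is vacuous since $t$ is a unit in $\mathbb{Z}[t,t^{-1}]$ — the only thing to check is the order bound, which reduces to $\CL{\tfrac{k+4+s}{5}}-1=\CL{\tfrac{(k-1)+s}{5}}$ as you assert.
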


\begin{proof} 
We proceed by induction on $k$. Let $N > l + 4$ and assume the result
holds for $l \le k \le N-1$. Then by Lemma \omylem{fun5} we have
$$                  
U_5( u\,t^N) = - \sum_{j=0}^4 \sigma_j(\tau) \, U_5(u\,t^{N+j-5})
 = - v\,\sum_{j=0}^4 \sigma_j(\tau) \, p_{u,N+j-5}(t) = v\, p_{u,N}(t),
$$
where
$$
p_{u,N}(t) = 
  - \sum_{j=0}^4 \sigma_j(\tau) \, p_{u,N+j-5}(t) \in \mathbb{Z}[t,t^{-1}],
$$
and
\begin{align*}
\ord_t(p_{u,N}(t)) &\ge 
\umin{0 \le j \le 4}(\ord_t(\sigma_j) + \ord_t(p_{u,N+j-5}(t)))\\
&\ge\umin{0 \le j \le 4}\left(1 + \CL{\frac{N+j+s-5}{5}}\right)
 =\CL{\frac{N+s}{5}}.
\end{align*}
The result for all $k\ge l$ follows. The induction proof for $k < l$ is similar.
\end{proof}

\begin{lemma}
\omylabel{lem:U5utkcofs}
Let $u=u(\tau)$, and $l\in \mathbb{Z}$. 
Suppose for $l\leq k\leq l+4$
there exist Laurent polynomials $p_{u,k}(t)\in \mathbb{Z}[t,t^{-1}]$
such that 
\begin{align} U_5(u\,t^k)=v\,p_{u,k}(t),
\omylabel{eq:U5tkb}
\end{align} 
where
$$
p_{u,k}(t) = \sum_n c_u(k,n)\,t^n,\quad
\nu_5(c_u(k,n)) \ge \FL{\frac{3n-k+r}{4}}
$$
for a fixed integer $r$, where $t=t(\tau)$ is defined in
\omyeqn{deft} and where $v=v(\tau)$.
Then there exists a  sequence of Laurent
polynomials $p_{u,k}(t)\in \mathbb{Z}[t,t^{-1}]$, $k\in \mathbb{Z}$,
such that \omyeqn{U5tkb} holds for $k>l+4$, where
$$
p_{u,k}(t) = \sum_n c_u(k,n)\,t^n,\quad\mbox{and}\quad
\nu_5(c_u(k,n)) \ge \FL{\frac{3n-k+r+2}{4}}.
$$
\end{lemma}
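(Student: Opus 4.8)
The plan is to run an induction on $k$ that mirrors the proof of Lemma \lem{ordtpk}, but now bookkeeping the $5$-adic valuations of the coefficients of $p_{u,k}(t)$ rather than its $t$-order. For any $N>l+4$, Lemma \lem{fun5} (with $j=N$) gives
\[
U_5(u\,t^N)=-\sum_{j=0}^{4}\sigma_j(\tau)\,U_5(u\,t^{N+j-5})
 = v\sum_{j=0}^{4}\bigl(-\sigma_j(t)\bigr)\,p_{u,N+j-5}(t),
\]
because each $\sigma_j(\tau)=\sigma_j(t(\tau))$ is a polynomial in $t$ and every index $N+j-5$ lies in $\{l,\dots,N-1\}$; hence, once the $p_{u,m}$ for $l\le m<N$ are in hand, $p_{u,N}(t):=-\sum_{j=0}^{4}\sigma_j(t)\,p_{u,N+j-5}(t)\in\mathbb{Z}[t,t^{-1}]$ realizes \eqn{U5tkb} at $k=N$ (with the same $v$). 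The one arithmetic input I would extract from Theorem \thm{modeq} is that, writing $\sigma_j(t)=\sum_{i=1}^{j+1}s_{j,i}\,t^i$, inspection of \eqn{sig0}--\eqn{sig4} yields
\[
\nu_5(s_{j,i})\ \ge\ \min(i,j)\qquad(0\le j\le 4,\ 1\le i\le j+1);
\]
for instance $\sigma_4=-5^4t^5+2\cdot5^4t^4-7\cdot5^3t^3+12\cdot5^2t^2-11\cdot5\,t$ has $t^i$-coefficients of valuation $4,4,3,2,1$ for $i=5,4,3,2,1$.

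For the base cases $l\le k\le l+4$ we are handed $\nu_5(c_u(k,n))\ge\FL{(3n-k+r)/4}$; since $\FL{(3n-k+r)/4}\le\FL{(3n-k+r+2)/4}$ this is the weaker of the two bounds, and it is the only form of the hypothesis the recursion needs, so the $r$-bound is available at every $m\in\{l,\dots,N-1\}$ (the base cases by hypothesis, the rest by the inductive hypothesis, which actually gives the stronger $r+2$-bound). Comparing coefficients of $t^n$ in the displayed identity,
\[
c_u(N,n)=-\sum_{j=0}^{4}\sum_{i=1}^{j+1}s_{j,i}\,c_u(N+j-5,\,n-i),
\]
so the ultrametric inequality, together with $\nu_5(s_{j,i})\ge\min(i,j)$, the inductive bound, and the fact that $\min(i,j)\in\mathbb{Z}$, gives
\[
\nu_5(c_u(N,n))\ \ge\ \umin{\substack{0\le j\le 4\\ 1\le i\le j+1}}\ \left\lfloor\frac{\bigl(3n-N+r\bigr)+\bigl(4\min(i,j)-3i-j+5\bigr)}{4}\right\rfloor .
\]

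It therefore remains only to verify the elementary inequality $4\min(i,j)-3i-j+5\ge 2$ for $0\le j\le 4$, $1\le i\le j+1$: for $i\le j$ this reads $i\ge j-3$, true since $i\ge1$ and $j\le4$; for $i=j+1$ it is the equality $4j=4j$ (or one simply checks the $15$ admissible pairs). Granting it, each numerator above is an integer that is $\ge 3n-N+r+2$, so by monotonicity of $\FL{\cdot}$ we obtain $\nu_5(c_u(N,n))\ge\FL{(3n-N+r+2)/4}$, which is the desired bound at $k=N$; this closes the induction and proves \eqn{U5tkb}, now with the improved valuation, for all $k>l+4$. I do not expect a genuine obstacle here: the only step demanding care is the valuation table for the $\sigma_j$, since the constant $2$ is precisely the slack in $4\min(i,j)-3i-j+5\ge2$ and is attained (e.g.\ at $(i,j)=(1,4)$), so the estimate is sharp and affords no margin for a coarser bound on the $s_{j,i}$.
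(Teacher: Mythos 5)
Your proof is correct and follows essentially the same route as the paper's: induction via Lemma \ref{lem:fun5}, a $5$-adic valuation bound on the coefficients of the $\sigma_j$ read off from \eqref{eq:sig0}--\eqref{eq:sig4}, and the ultrametric inequality; your bound $\nu_5(s_{j,i})\ge\min(i,j)$ coincides with the paper's $\nu_5(s(j,i))\ge\FL{(3i+j)/4}$ on the admissible range $1\le i\le j+1$, $0\le j\le 4$, and the final floor estimate is the same. No gaps.
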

\begin{remark}
Recall that $\nu_p(n)$ denotes the $p$-adic order of an integer $n$; i.e.\ the
highest power of $p$ that divides $n$.
\end{remark}

\begin{proof}
We proceed by induction on $k$. Let $N>l+4$ and assume \omyeqn{U5tkb}
holds for $l \le k \le N-1$ where
$$
p_{u,k}(t) = \sum_n c_u(k,n)\,t^n,\quad
\nu_5(c_u(k,n)) = \FL{\frac{3n-k+r}{4}}.
$$
As in the proof of Lemma \omylem{ordtpk} we have
$$                  
U_5( u\,t^N) 
= v\, p_{u,N}(t),
$$
where
$$
p_{u,N}(t) = 
  - \sum_{j=0}^4 \sigma_j(\tau) \, p_{u,N+j-5}(t) \in \mathbb{Z}[t,t^{-1}],
$$
From Lemma \omylem{fun5} we have
$$
\sigma_j(t) = \sum_{l=1}^{j+1} s(j,l)\,t^l \in \mathbb{Z}[t],
$$
where
$$
\nu_5(s(j,l)) \ge \FL{\frac{3l+j}{4}},
$$
for $1 \le l \le j+1$, $0 \le j \le 4$.
Therefore
\begin{equation*}
p_{u,N}(t) = - \sum_{j=0}^4\sum_{l=1}^{j+1} s(j,l) \sum_{m} c_u(N+j-5,m) t^{m+l}
= \sum_{n} c_u(N,n)\,t^n,
\end{equation*}
where
$$
c_u(N,n) = 
 - \sum_{j=0}^4\sum_{l=1}^{j+1} s(j,l) \, c_u(N+j-5,n-l),
$$
and
\begin{align*}
\nu_5(c_u(N,n)) & \ge 
\submin{1 \le l \le j+1 \\ 0 \le j \le 4} 
 \bigg(\nu_5(s(j,l))+\nu_5(c_u(N+j-5,n-l)\bigg)\\
& \ge
\submin{1 \le l \le j+1 \\ 0 \le j \le 4} 
 \bigg(\FL{\frac{3l+j}{4}} + \FL{\frac{3(n-l) - (N+j-5) + r}{4}}\bigg)\\
& \ge
\submin{1 \le l \le j+1 \\ 0 \le j \le 4} 
 \FL{\frac{3l+j+3(n-l) - (N+j-5) + r-3}{4}} \ge
 \FL{\frac{3n - N+ r +2}{4}}.
\end{align*}
The result follows.
\end{proof}

We define the following functions which will be needed in the proof of 
Theorem \omythm{mainthm}.
\begin{equation}
P_A:=\frac{J_{10}^2J_5J_2^6}{J_{20}J_4^3J_1^5}
      -4\frac{qJ_{20}J_5^2J_4^3}{J_{10}J_2^3J_1^2},\quad 
P_B:=\frac{J_{10}^6J_2^2J_1}{qJ_{20}^3J_5^5J_4}
      +4\frac{qJ_{20}^3J_4J_1^2}{J_{10}^3J_5^2J_2}\quad 
A:=\frac{J_{50}^2J_1^4}{J_{25}^4J_2^2},
\quad B:=\frac{qJ_{25}}{J_1}.
\omylabel{eq:PAPBABdefs}
\end{equation}
For $f=f(\tau)$ we define
\begin{equation}
U_A(f) := U_5(A\,f),\qquad U_B(f) := U_5(B\,f).
\omylabel{eq:UAUBdefs}
\end{equation}

First we need some initial values of $U_A(P_A\,t^k)$ and $U_B(P_B\,t^k)$.
\begin{lemma}
\omylabel{lem:initUAPAtk}
\begin{align*}
\mbox{Group I}& \\
&U_A(P_A)=P_B(5^4t^5-7\cdot5^3t^4+14\cdot5^2t^3-2\cdot5^2t^2+t),\\
&U_A(P_At^{-1})=-P_Bt,\\
&U_A(P_At^{-2})=-5P_Bt^2,\\
&U_A(P_At^{-3})=-5^2P_Bt^3,\\
&U_A(P_At^{-4})=-5^3P_Bt^4.\\
\mbox{Group II}& \\
&U_B(P_B)=P_A,\\
&U_B(P_Bt^{-1})=P_A(-5t+2),\\
&U_B(P_Bt^{-2})=P_A(5^2t^2-8\cdot5t+8),\\
&U_B(P_Bt^{-3})=P_A(5^3t^3-34\cdot5t+34),\\
&U_B(P_Bt^{-4})=P_A(-5^4t^4+16\cdot5^3t^3-36\cdot5^2t^2-128\cdot5t+6\cdot5^2).
\end{align*}
\end{lemma}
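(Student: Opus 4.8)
The plan is to verify each of the ten identities directly, as an equality of modular functions on $\Gamma_0(20)$, via the algorithm for proving $U_5$ eta-product identities in Section~\sect{modfuncs} (the one illustrated there on~\eqn{Upalgeg}). The first observation is that, after absorbing the powers of $q$ coming from~\eqn{etadef}, each of $A$, $B$, $P_A$, $P_B$ and $t$ is a $\C$-linear combination of ordinary eta-products: $A$ and $t$ of level dividing $50$ and $10$, while $P_A$ and $P_B$ are two-term combinations of eta-products of level dividing $20$. Hence each $A\,P_A\,t^k$ and each $B\,P_B\,t^k$ is a two-term $\C$-combination of eta-products that are modular on $\Gamma_0(100)$, so by Theorem~\thm{ALUpthm} (with $p=5$, $N=20$) every $U_5(A\,P_A\,t^k)$ and $U_5(B\,P_B\,t^k)$ is a modular function on $\Gamma_0(20)$; and the right-hand sides, being $P_B$ (resp.\ $P_A$) times a Laurent polynomial in $t$, are $\C$-combinations of eta-products modular on $\Gamma_0(20)$. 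Thus each claimed identity is already in the shape~\eqn{Upfid}.

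For each of the ten identities I would then carry out the steps of that algorithm: use Theorem~\thm{etamodthm} to confirm modularity of every eta-product; use Theorem~\thm{chualang} and Lemma~\lem{fanw} to list the inequivalent cusps of $\Gamma_0(20)$ and their fan widths; use Theorem~\thm{ordthm} to compute the order at each cusp of every eta-product on the right, and Theorem~\thm{UpLB} (the Gordon--Hughes bound, with its case split on $\nu_5(\delta)$) to obtain lower bounds $L(g_j,\zeta,20)$ at each cusp $\ne\infty$ for the orders of the two $U_5$-terms; assemble the constant $B$ of~\eqn{UpB}; and finally expand the difference of the two sides in powers of $q$ far enough to see that its order at $\infty$ exceeds $-B$. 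Theorem~\thm{valcor} then forces that difference to vanish, which is the identity. (The $q$-expansion of $U_5(A\,P_A\,t^k)$ at $\infty$ is obtained from that of $A\,P_A\,t^k$ by keeping every fifth coefficient.) All of this is routine with the \texttt{ETA} package.

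It is worth noting, though not needed for the lemma itself, that the exponents $k\in\{-4,-3,-2,-1,0\}$ treated in each of the two groups are exactly the base cases from which the Fundamental Lemma~\lem{fun5} — a consequence of the modular equation~\eqn{modeq} — propagates the stated shape of $U_A(P_A\,t^k)$ and $U_B(P_B\,t^k)$ to every integer $k$, both upward and downward (the downward step being legitimate because $\sigma_0(\tau)=-t$ is invertible). These ten identities are precisely what the inductions in Lemmas~\lem{ordtpk} and~\lem{U5utkcofs} require, and nothing further need be checked directly.

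The obstacle is computational, not conceptual. For the two $k=0$ identities the right-hand polynomial in $t$ has degree $5$, so the right side is a combination of up to ten eta-products on $\Gamma_0(20)$ and the threshold $-B$ is correspondingly large, forcing the $q$-expansions to be carried a good way out; and at the cusps $1/5$, $1/10$, $1/20$ of $\Gamma_0(20)$ all three branches of Theorem~\thm{UpLB} come into play, so the cusp-order bookkeeping must be done with care. A minor additional point is that the explicit right-hand sides had to be found first, by matching $q$-expansions against a basis of eta-quotients on $\Gamma_0(20)$; the algorithm only certifies them once they are guessed.
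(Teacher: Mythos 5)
Your proposal is correct and follows essentially the same route as the paper: the authors prove all ten identities by writing each as a $U_5$ eta-product identity of the form \eqn{Upfid} and running the five-step algorithm of Section \subsect{Upop} (modularity via Theorem \thm{etamodthm}, cusps and widths via Theorem \thm{chualang} and Lemma \lem{fanw}, cusp orders via Theorems \thm{ordthm} and \thm{UpLB}, the bound $B$ of \eqn{UpB} --- they note the smallest value encountered is $B=-14$ --- and a finite $q$-expansion check), all carried out with the \texttt{ETA} package in \textsc{MAPLE}. Your observations about the role of these ten base cases in Lemmas \lem{ordtpk} and \lem{U5utkcofs}, and about the right-hand sides having to be guessed before being certified, are accurate but not part of the paper's argument for this lemma.
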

\begin{proof}
We use the algorithm described in Section \subsect{Upop} to prove
each of these identities. The identities take the form
$$
U_5(g) = f,
$$
where $f$, $g$ are linear combinations of eta-products.
For each identity we check that $f$ is a linear combination of eta-products
which are
modular functions on $\Gamma_0(100)$ and that $g$ is a linear
combination of eta-products
which are
modular functions on $\Gamma_0(20)$. For each of the identities we follow
the 5 steps in the algorithm given after Theorem \omythm{UpLB}. We note 
that the smallest value of $B$ encountered is $B=-14$. These steps have been
carried out with the help of \textsc{MAPLE}, including all necessary verifications
so that the results are proved.
\end{proof}

Following \cite{Pa-Ra12}\omycite{Pa-Ra12} a map $a\,:\,\mathbb{Z}\times\mathbb{Z} \longrightarrow
\mathbb{Z}$ is called a \textit{discrete array} if for each $i$ the
map $a(i,-)\,:\,\mathbb{Z} \longrightarrow \mathbb{Z}$, by $j \mapsto a(i,j)$
has finite support.

\begin{lemma}
\omylabel{lem:UAPAtk}
There exist discrete arrays $a$ and $b$ such that for $k \ge 1$
\begin{align}
U_A( P_A\,t^k ) &=  P_B\,\sum_{n\ge \CL{(k+5)/5}} a(k,n)\, t^n,\quad
\mbox{where}\quad \nu_5(a(k,n)) \ge \FL{\frac{3n-k}{4}},
\omylabel{eq:UAPAtk}\\
U_B( P_B\,t^k ) &=  P_A\,\sum_{n\ge \CL{k/5}} b(k,n)\, t^n,\quad
\mbox{where}\quad \nu_5(b(k,n)) \ge \FL{\frac{3n-k+2}{4}}.      
\omylabel{eq:UBPBtk}
\end{align}
\end{lemma}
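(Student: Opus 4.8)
The plan is to obtain both \omyeqn{UAPAtk} and \omyeqn{UBPBtk} from Lemmas \omylem{fun5}, \omylem{ordtpk} and \omylem{U5utkcofs}, using the ten explicit identities of Lemma \omylem{initUAPAtk} as base cases of two parallel inductions on $k$: one that tracks the $t$-support of the Laurent polynomial appearing after $U_5$, and one that tracks the $5$-adic valuations of its coefficients. The inductive step is already supplied by these lemmas: Lemma \omylem{fun5} writes $U_5(u\,t^N)$ as $-\sum_{j=0}^{4}\sigma_j(t)\,U_5(u\,t^{N+j-5})$ with $\sigma_j(t)\in\mathbb{Z}[t]$ of $t$-order $\ge 1$ and with $5$-adic control on its coefficients, and Lemmas \omylem{ordtpk}, \omylem{U5utkcofs} then push order and valuation estimates for five consecutive exponents up to all larger exponents.

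For \omyeqn{UAPAtk} I would take $u=A\,P_A$ and $v=P_B$, noting $U_A(P_A\,t^k)=U_5((A\,P_A)\,t^k)$ by \omyeqn{UAUBdefs}. Group~I of Lemma \omylem{initUAPAtk} exhibits, for $-4\le k\le 0$, an identity $U_5(u\,t^k)=P_B\,p_{u,k}(t)$ with $p_{u,k}(t)\in\mathbb{Z}[t,t^{-1}]$; inspecting those five polynomials shows $\ord_t(p_{u,k}(t))\ge\CL{(k+5)/5}$, so Lemma \omylem{ordtpk} with $l=-4$, $s=5$ produces such $p_{u,k}$ for all $k\in\mathbb{Z}$, giving the support bound $n\ge\CL{(k+5)/5}$ in \omyeqn{UAPAtk} for $k\ge 1$. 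Writing $p_{u,k}(t)=\sum_n a(k,n)\,t^n$, the same five identities show $\nu_5(a(k,n))\ge\FL{(3n-k-2)/4}$ for $-4\le k\le 0$, so Lemma \omylem{U5utkcofs} with $l=-4$, $r=-2$ --- whose conclusion replaces the offset $r$ by $r+2$ --- yields $\nu_5(a(k,n))\ge\FL{(3n-k)/4}$ for all $k>0$. Each $p_{u,k}$ is a Laurent polynomial, so setting $a(k,n)=0$ for $k\le 0$ makes $a$ a discrete array.

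The proof of \omyeqn{UBPBtk} is the mirror image with $u=B\,P_B$, $v=P_A$, and Group~II of Lemma \omylem{initUAPAtk} as base cases: one checks $\ord_t(p_{u,k}(t))\ge\CL{k/5}$ for $-4\le k\le 0$ and applies Lemma \omylem{ordtpk} with $l=-4$, $s=0$; and one checks $\nu_5(b(k,n))\ge\FL{(3n-k)/4}$ for $-4\le k\le 0$ and applies Lemma \omylem{U5utkcofs} with $l=-4$, $r=0$, which gives $\nu_5(b(k,n))\ge\FL{(3n-k+2)/4}$ for $k\ge 1$.

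The only real work is the finite check that the ten identities of Lemma \omylem{initUAPAtk} satisfy the order and $5$-adic hypotheses of Lemmas \omylem{ordtpk}, \omylem{U5utkcofs} for exactly these offsets, $s\in\{0,5\}$ and $r\in\{-2,0\}$; getting the offsets aligned is the only delicate point, since the $r\mapsto r+2$ improvement built into Lemma \omylem{U5utkcofs} is precisely what converts the base exponent $r=-2$ into the $0$ demanded in \omyeqn{UAPAtk} and $r=0$ into the $2$ demanded in \omyeqn{UBPBtk}. I anticipate no obstacle beyond this bookkeeping and the routine arithmetic of the base cases.
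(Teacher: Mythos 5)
Your proposal is correct and follows essentially the same route as the paper: the ten identities of Lemma \omylem{initUAPAtk} serve as base cases, Lemma \omylem{ordtpk} supplies the support bounds and Lemma \omylem{U5utkcofs} the $5$-adic bounds, with the same parameters $r=-2$ for Group~I and $r=0$ for Group~II. (Your choice $s=5$ for Group~I is in fact the one that yields the stated bound $n\ge\CL{(k+5)/5}$; the paper quotes $s=4$, which gives only the weaker $\CL{(k+4)/5}$ and appears to be a slip.)
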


\begin{proof}
From Lemma \omylem{initUAPAtk}, 
Group I we find there is a discrete array $a$ such that
$$
U_A( P_A\,t^k ) =  P_B\,\sum_{n\ge \CL{(k+5)/5}} a(k,n)\, t^n,\quad
\mbox{where}\quad \nu_5(a(k,n)) \ge \FL{\frac{3n-k-2}{4}},
$$
for $-4 \le k \le 0$.  Lemma \omylem{ordtpk} (with $s=4$) and Lemma 
\omylem{U5utkcofs} (with $r=-2$) imply \omyeqn{UAPAtk} for $k\ge1$.
From Lemma \omylem{initUAPAtk}, 
Group II we find there is a discrete array $b$ such that
$$
U_B( P_B\,t^k ) =  P_A\,\sum_{n\ge \CL{k/5}} b(k,n)\, t^n,\quad
\mbox{where}\quad \nu_5(b(k,n)) \ge \FL{\frac{3n-k}{4}},
$$
for $-4 \le k \le 0$.  Lemma \omylem{ordtpk} (with $s=0$) and Lemma 
\omylem{U5utkcofs} (with $r=0$) imply \omyeqn{UBPBtk} for $k\ge1$.
\end{proof}

\subsection{Proof of Theorem \omythm{mainthm}}
\omylabel{subsec:pfmainthmi}
For $\alpha\ge1$ define $\delta_\alpha$ by $0< \delta < 5^\alpha$
and $24\delta_\alpha\equiv1\pmod{5^\alpha}$. Then
$$
\delta_{2\alpha} =\frac{23\times5^{2\alpha} + 1}{24},\qquad 
\delta_{2\alpha+1} =\frac{19\times5^{2\alpha+1} + 1}{24}.
$$               
We let
$$
\lambda_{2\alpha}=\lambda_{2\alpha+1}= \frac{5}{24}(1 - 5^{2\alpha}).
$$
For $n\ge0$ we define
\begin{equation}
c_f(n) := a_f(5n-1)  + a_f(n/5).
\omylabel{eq:cfndef}
\end{equation}
We find that for $\alpha\ge3$
\begin{equation}
\sum_{n=0}^\infty \left(
a_f(5^{\alpha}n + \delta_\alpha)
+ a_f(5^{\alpha-2}n + \delta_{\alpha-2})\right)q^{n+1}
=\sum_{n=1}^\infty c_f(5^{\alpha-1}n + \lambda_{\alpha-1}) q^n.
\omylabel{eq:afcfid}
\end{equation}
We define the sequence of functions $(L_{\alpha})_{\alpha=0}^\infty$ by
$L_0 := P_A$ and for $\alpha\ge0$
$$
L_{2\alpha+1} := U_A( L_{2\alpha}),\qquad\mbox{and}\qquad
L_{2\alpha+2} := U_B( L_{2\alpha+1}).              
$$
\begin{lemma}
\omylabel{lem:Lalpha}
For $\alpha\ge0$,
$$
L_{2\alpha} = \frac{J_5 J_2^2}{J_1^4}
\sum_{n=0}^\infty c_f(5^{2\alpha}n + \lambda_{2\alpha}) q^n,
$$
and
$$
L_{2\alpha+1} = \frac{J_{10}^2 J_1}{J_5^4}
\sum_{n=0}^\infty c_f(5^{2\alpha+1}n + \lambda_{2\alpha+1}) q^n.
$$
\end{lemma}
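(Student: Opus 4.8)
The plan is to prove the two stated identities together by a single induction on $\alpha$: the first identity at $\alpha=0$ is the base case, the first identity at level $\alpha$ will imply the second identity at level $\alpha$ (by applying $U_A$), and the second identity at level $\alpha$ will imply the first identity at level $\alpha+1$ (by applying $U_B$). For the base case note that $\lambda_0=\tfrac{5}{24}(1-5^{0})=0$ and $L_0=P_A$ by definition, so the assertion is $P_A=\frac{J_5J_2^2}{J_1^4}\sum_{n\ge0}c_f(n)q^n$; since $c_f(n)=a_f(5n-1)+a_f(n/5)$ by \eqref{eq:cfndef}, the series $\sum_{n\ge0}c_f(n)q^n$ is exactly the right-hand side of Theorem~\ref{thm:af5thm}, and multiplying \eqref{eq:af5id} by $J_5J_2^2/J_1^4$ and cancelling reproduces the two eta-quotients defining $P_A$ in \eqref{eq:PAPBABdefs}. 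This base case is a routine eta-quotient computation.

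For the inductive step I would use two facts. The first is the standard multiplicativity of $U_5$ on formal power series: $U_5\bigl(\phi(q^5)\,\psi(q)\bigr)=\phi(q)\,U_5\bigl(\psi(q)\bigr)$, immediate from \eqref{eq:Updeffls}. The second consists of two cancellations among eta-quotients: writing $E_1(q)=\frac{J_{10}^2J_1}{J_5^4}$ and $E_2(q)=\frac{J_5J_2^2}{J_1^4}$, which are precisely the prefactors appearing in the lemma, one checks directly from \eqref{eq:PAPBABdefs} that
\[
A\cdot E_2(q)=\frac{J_{50}^2J_5}{J_{25}^4}=E_1(q^5),
\qquad
B\cdot E_1(q)=\frac{qJ_{25}J_{10}^2}{J_5^4}=q\,E_2(q^5).
\]
Since $U_A(f)=U_5(Af)$ and $U_B(f)=U_5(Bf)$ by \eqref{eq:UAUBdefs}, and these operators act here purely as operators on formal power series (no modularity is needed for this lemma), the step is now mechanical. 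Assuming the first identity at level $\alpha$, apply $U_A$: the prefactor $E_2(q)$ merges with $A$ into $E_1(q^5)$, and then $U_5$ sends $\sum_n c_f(5^{2\alpha}n+\lambda_{2\alpha})q^n$ to $\sum_n c_f(5^{2\alpha+1}n+\lambda_{2\alpha})q^n$; since $\lambda_{2\alpha}=\lambda_{2\alpha+1}$ this is exactly the second identity at level $\alpha$. Assuming the second identity at level $\alpha$, apply $U_B$: now $B\cdot E_1(q)=q\,E_2(q^5)$, so $U_B(L_{2\alpha+1})=E_2(q)\,U_5\bigl(q\sum_n c_f(5^{2\alpha+1}n+\lambda_{2\alpha+1})q^n\bigr)=E_2(q)\sum_{k\ge1}c_f\bigl(5^{2\alpha+1}(5k-1)+\lambda_{2\alpha+1}\bigr)q^k$, and the elementary identity $5^{2\alpha+1}(5k-1)+\lambda_{2\alpha+1}=5^{2\alpha+2}k+\lambda_{2\alpha+2}$ (equivalently $\lambda_{2\alpha+2}=\lambda_{2\alpha+1}-5^{2\alpha+1}$) turns this into $E_2(q)\sum_{k\ge1}c_f(5^{2\alpha+2}k+\lambda_{2\alpha+2})q^k$; the missing $k=0$ term may be restored since $\lambda_{2\alpha+2}<0$ forces $c_f(\lambda_{2\alpha+2})=0$, yielding the first identity at level $\alpha+1$.

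I do not expect a genuine obstacle; the only real care goes into the index bookkeeping in the $U_B$ step. The factor $q$ sitting inside $B$ is exactly what converts the progression $5n$ produced by a plain $U_5$ into $5n-1$, which is what encodes the $-1$ shift in $a_f(5n-1)$; one must keep the two progressions $5^{j}n+\lambda_j$ straight and verify the two short arithmetic identities $\lambda_{2\alpha}=\lambda_{2\alpha+1}$ and $\lambda_{2\alpha+2}=\lambda_{2\alpha+1}-5^{2\alpha+1}$, together with the vanishing $c_f(m)=0$ for $m<0$. The eta-quotient cancellations displayed above are immediate once written out and the multiplicativity of $U_5$ is standard, so the whole proof reduces to these bookkeeping checks and the base-case computation.
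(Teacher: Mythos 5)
Your proof is correct and follows essentially the same route as the paper, which performs the identical base-case computation $L_0=P_A=\frac{J_5J_2^2}{J_1^4}\sum_n c_f(n)q^n$ via Theorem \ref{thm:af5thm} and then dismisses the rest as "a routine induction argument." Your cancellations $A\,E_2(q)=E_1(q^5)$ and $B\,E_1(q)=q\,E_2(q^5)$, together with the checks $\lambda_{2\alpha}=\lambda_{2\alpha+1}$, $\lambda_{2\alpha+2}=\lambda_{2\alpha+1}-5^{2\alpha+1}$ and $c_f(m)=0$ for $m<0$, are exactly the details that routine induction requires, and they all verify.
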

\begin{proof}
\begin{align*}
L_0 &= P_A =
\frac{J_{10}^2J_5J_2^6}{J_{20}J_4^3J_1^5}
  -4q\frac{J_{20}J_5^2J_4^3}{J_{10}J_2^3J_1^2}
= \frac{J_5 J_2^2}{J_1^4} 
\left(
\frac{ J_{10}^2 J_2^4}{J_{20} J_4^3 J_1} - 4q
\frac{J_{20}J_5 J_4^3 J_1^2 }{ J_{10} J_2^5}
\right)\\
&= \frac{J_5 J_2^2}{J_1^4} 
\sum_{n=0}^\infty (a_f(5n-1) + a_f(n/5)) q^n =
\sum_{n=0}^\infty c_f(n+\lambda_0) q^n.
\end{align*}
This is the first equation with $\alpha=0$. The general result
follows by a routine induction argument.
\end{proof}

Our main result for the rank parity function modulo powers of $5$
is the following theorem.
\begin{theorem}
\omylabel{thm:Lmain}
There exists a discrete array $\ell$ such that for $\alpha\ge1$            
\begin{align}
L_{2\alpha} &=  P_A\,\sum_{n\ge 1} \ell(2\alpha,n)\, t^n,\quad
\mbox{where}\quad \nu_5(\ell(2\alpha,n)) \ge \alpha+ \FL{\frac{3n-3}{4}},
\omylabel{eq:L2a}\\
L_{2\alpha+1} &=  P_B\,\sum_{n\ge 2} \ell(2\alpha+1,n)\, t^n,\quad
\mbox{where}\quad \nu_5(\ell(2\alpha+1,n)) \ge \alpha+1+\FL{\frac{3n-6}{4}}.
\omylabel{eq:L2b}
\end{align}
\end{theorem}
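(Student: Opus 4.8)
The plan is to prove the two statements \eqref{eq:L2a} and \eqref{eq:L2b} simultaneously by induction on $\alpha$, using the recursions $L_{2\alpha+1} = U_A(L_{2\alpha})$ and $L_{2\alpha+2} = U_B(L_{2\alpha+1})$ together with the structural results of Lemma \omylem{UAPAtk}. The base case $\alpha=1$ requires computing $L_1 = U_A(P_A)$ and $L_2 = U_B(L_1)$ explicitly. For $L_1 = U_A(P_A)$, the first identity in Group I of Lemma \omylem{initUAPAtk} gives $L_1 = P_B(5^4t^5 - 7\cdot 5^3 t^4 + 14\cdot 5^2 t^3 - 2\cdot 5^2 t^2 + t)$; one then checks directly that each coefficient $\ell(3,n)$ (here $2\alpha+1 = 3$, so $\alpha=1$) satisfies $\nu_5(\ell(3,n)) \ge 2 + \FL{(3n-6)/4}$ for $n = 1,\dots,5$, and that the lowest power of $t$ appearing is $t^1$ — wait, the claimed lower bound is $n\ge 2$, so one must verify the $t^1$ term is actually absent or that the indexing is consistent; more likely $L_1$ should be re-expanded so that the sum genuinely starts at $n\ge 2$ after absorbing factors, or the bound $n \ge \CL{k/5}$ from Lemma \omylem{UAPAtk} is applied with the relevant shift. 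Then $L_2 = U_B(L_1)$ is handled by applying \eqref{eq:UBPBtk} term-by-term.

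The inductive step is the heart of the argument. Assume \eqref{eq:L2a} holds for a given $\alpha\ge1$, i.e.\ $L_{2\alpha} = P_A \sum_{n\ge 1}\ell(2\alpha,n)t^n$ with $\nu_5(\ell(2\alpha,n))\ge \alpha + \FL{(3n-3)/4}$. Applying $U_A$ and using \eqref{eq:UAPAtk} from Lemma \omylem{UAPAtk}, which says $U_A(P_A t^k) = P_B \sum_{m\ge \CL{(k+5)/5}} a(k,m) t^m$ with $\nu_5(a(k,m))\ge \FL{(3m-k)/4}$, we get
\begin{align*}
L_{2\alpha+1} = U_A(L_{2\alpha}) = \sum_{k\ge 1}\ell(2\alpha,k)\, U_A(P_A t^k)
= P_B \sum_{n} \Big(\sum_{k} \ell(2\alpha,k)\, a(k,n)\Big) t^n,
\end{align*}
so $\ell(2\alpha+1,n) = \sum_k \ell(2\alpha,k) a(k,n)$. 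For the $5$-adic bound, each summand has $\nu_5 \ge \alpha + \FL{(3k-3)/4} + \FL{(3n-k)/4}$; using $\FL{x}+\FL{y}\ge \FL{x+y-1}$ one bounds this below by $\alpha + \FL{(3k-3 + 3n-k)/4 - 1} = \alpha + \FL{(2k+3n-7)/4}$, and since the sum over $k$ ranges over $k\ge 1$ (and actually $k\ge \CL{(n-5)\cdot\text{something}}$ from the support condition $n\ge \CL{(k+5)/5}$, i.e.\ $k \le 5n-5$), the minimum over the relevant $k$ — which will be $k=1$ or the smallest contributing index — gives $\alpha + \FL{(3n-5)/4}$; this needs to be reconciled with the claimed $\alpha + 1 + \FL{(3n-6)/4}$, so the genuinely careful part is tracking exactly which $k$ contributes the minimum and exploiting that the array $a$ vanishes for small $n$ relative to $k$ (the condition $n \ge \CL{(k+5)/5}$ forces $k \ge 1$ to contribute only at $n\ge 2$, and more importantly the order constraint from Lemma \omylem{ordtpk} pushes things further). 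The analogous computation with $U_B$ and \eqref{eq:UBPBtk} yields the even-index statement, returning the factor $P_A$ and the bound with $\alpha$ replaced by $\alpha+1$.

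The main obstacle I anticipate is the bookkeeping of the two interlocking floor-function inequalities: one must show that the convolution $\ell(2\alpha+1,n) = \sum_k \ell(2\alpha,k)a(k,n)$ inherits exactly the asserted valuation, and the naive bound $\FL{x}+\FL{y}\ge\FL{x+y}-1$ loses too much unless one uses the support/order information (the lower limits $n\ge\CL{(k+5)/5}$ and $\ord_t \ge \CL{(k+s)/5}$ from Lemmas \omylem{ordtpk} and \omylem{U5utkcofs}) to restrict the range of $k$ effectively. Concretely, for a fixed $n$ only $k$ with $\CL{(k+5)/5}\le n$ contribute, i.e.\ $k\le 5n-5$, but the valuation $\FL{(3k-3)/4}$ grows with $k$ while $\FL{(3n-k)/4}$ decreases, so the sum $\FL{(3k-3)/4}+\FL{(3n-k)/4}$ is minimized at an interior or boundary value of $k$ that must be pinned down exactly — this is where the gain of $1$ in the exponent (from $\alpha$ to $\alpha+1$ over two steps, matching Lemma \omylem{U5utkcofs}'s ``$+2$'' after the floor) comes from, and it must be extracted without slack. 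Once this inequality is established cleanly, the induction closes and both \eqref{eq:L2a} and \eqref{eq:L2b} follow; the discrete-array property of $\ell$ is automatic since it is a finite $\mathbb{Z}$-linear convolution of discrete arrays $\ell(2\alpha,\cdot)$ with the discrete arrays $a$ and $b$ of Lemma \omylem{UAPAtk}.
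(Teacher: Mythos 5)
Your overall architecture --- induction on $\alpha$, the convolution formula $\ell(2\alpha+1,n)=\sum_k\ell(2\alpha,k)a(k,n)$, and reduction to a minimum over $k$ of sums of floor functions --- is exactly the paper's route. But the proposal has a genuine gap precisely at the step you yourself flag as ``the genuinely careful part'': you never establish the floor-function inequality that drives the induction, and the bound you do obtain is too weak. Applying the generic inequality $\FL{x}+\FL{y}\ge\FL{x+y}-1$ uniformly gives, at $k=1$, only $\alpha+\FL{\frac{3n-5}{4}}$, whereas the claim requires $\alpha+1+\FL{\frac{3n-6}{4}}=\alpha+\FL{\frac{3n-2}{4}}$; for $n=2$ these are $\alpha$ versus $\alpha+1$, so the generic bound genuinely fails to close the induction. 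The fix (which is what the paper does) is to split off the extremal index and evaluate it \emph{exactly}: for $k=1$ one has $\FL{\frac{3\cdot1-3}{4}}+\FL{\frac{3n-1}{4}}=\FL{\frac{3n-1}{4}}\ge\FL{\frac{3n-2}{4}}$ with no loss, while for $k\ge2$ the surplus $2k-4\ge 0$ in $\FL{\frac{3n+2k-6}{4}}$ absorbs the unit lost to the floor inequality; the $U_B$ step is handled the same way using $k\ge2$ and $\FL{\frac{3n+2k-7}{4}}\ge\FL{\frac{3n-3}{4}}$. Restricting the range of $k$ via the support condition $k\le 5n-5$, which is the device you propose, does not by itself recover the missing unit.

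Your base case is also muddled. $L_1$ is \emph{not} an instance of \omyeqn{L2b} --- that statement begins at $\alpha=1$, i.e.\ at $L_3$ --- so the presence of the $t^1$ term with coefficient $1$ in $L_1=P_B(t-2\cdot5^2t^2+\cdots+5^4t^5)$ is harmless and nothing needs to be ``re-expanded'' or ``absorbed''. The induction is anchored at $L_2=U_B(L_1)$, for which one verifies $\nu_5(\ell(2,n))\ge1+\FL{\frac{3n-3}{4}}$ by the same exact-at-$k{=}1$ computation; thereafter every $L_{2\alpha+1}$ with $\alpha\ge1$ automatically has its $t$-expansion starting at $n\ge2$ because $a(k,n)$ is supported on $n\ge\CL{(k+5)/5}\ge2$ for $k\ge1$. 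With the base case repaired and the two extremal-index evaluations carried out, your plan closes and coincides with the paper's proof; as written, neither is done.
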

\begin{proof}
We define the discrete array $\ell$ recursively. 
Define
\begin{align*}
&\ell(1,1)=1,\,
\ell(1,2)=-2\cdot 5^2,\,
\ell(1,3)=14\cdot 5^2,\,
\ell(1,4)=-7\cdot 5^3,\,
\ell(1,5)=5^4,\,\\ 
&\mbox{and}\quad \ell(1,k)=0, \quad\mbox{for $k\ge 6$}.
\end{align*}
For $\alpha\ge1$ define
\beq
\ell(2\alpha,n) = \sum_{k \ge 1} \ell(2\alpha-1,k)\,b(k,n) 
\qquad\mbox{(for $n\ge1$)},
\omylabel{eq:ellevdef}
\eeq
and
\beq
\ell(2\alpha+1,n) = \sum_{k \ge 1} \ell(2\alpha,k)\,a(k,n)
\qquad\mbox{(for $n\ge2$),}
\omylabel{eq:ellodddef}
\eeq
where $a$ and $b$ are the discrete arrays given in Lemma \lem{UAPAtk}.
From Lemma \omylem{initUAPAtk}, Group I 
and by Lemma \lem{UAPAtk} and equation \omyeqn{ellevdef} we have 
$$
L_1 = U_A(L_0) = U_A(P_A) = P_B \sum_{n=1}^5 \ell(1,n)\,t^n,\quad
\mbox{where}\quad \nu_5(\ell(1,n)) \ge \FL{\frac{3n-2}{4}}.
$$
\begin{align*}
L_2 &= U_B(L_1) = \sum_{n=1}^5 \ell(1,n) U_B(P_B t^n),\\
    &= \sum_{n=1}^5 \ell(1,n) P_A \sum_{k\ge1} b(n,k) t^k\\
    &= P_A\sum_{n\ge1} \sum_{k=1}^5\ell(1,k) b(k,n) t^n\\
    &= P_A \sum_{n\ge1} \ell(2,n) t^n,
\end{align*}
where
\begin{align*}
\nu_5(\ell(2,n))&\ge
\umin{1 \le k \le 5}
 \bigg(\nu_5(\ell(1,k))+\nu_5(b(k,n)\bigg)
\ge 
\umin{1 \le k \le 5}
 \bigg(\FL{\frac{3k-2}{4}} + \FL{\frac{3n-k+2}{4}}\bigg)
&=\FL{\frac{3n+1}{4}},
\end{align*}
since when $k=1$, 
 $\FL{\frac{3k-2}{4}} + \FL{\frac{3n-k+2}{4}}=\FL{\frac{3n+1}{4}}$, and
for $k\ge2$,
$$
\FL{\frac{3k-2}{4}} + \FL{\frac{3n-k+2}{4}}
\ge \FL{\frac{3n+2k-3}{4}}\ge \FL{\frac{3n+1}{4}}.
$$
Thus the result holds for $L_{2\alpha}$ when $\alpha=1$.
We proceed by induction. Suppose the result holds for $L_{2\alpha}$
for a given $\alpha\ge1$. Then by Lemma \lem{UAPAtk} and equation \omyeqn{ellodddef}
we have
\begin{align*}
L_{2\alpha+1} &= U_A(L_{2\alpha}) 
= \sum_{n\ge 1} \ell(2\alpha,n) U_A(P_A t^n),\\
    &= \sum_{n\ge1} \ell(2\alpha,n) P_B \sum_{k\ge2} a(n,k) t^k\\
    &= P_B\sum_{n\ge2} \sum_{k\ge1}\ell(2\alpha,k) a(k,n) t^n\\
    &= P_B \sum_{n\ge2} \ell(2\alpha+1,n) t^n,
\end{align*}
where
\begin{align*}
\nu_5(\ell(2\alpha+1,n))&\ge
\umin{1 \le k}
 \bigg(\nu_5(\ell(2\alpha,k))+\nu_5(a(k,n)\bigg)
\ge 
\umin{1 \le k}
 \bigg(\alpha + \FL{\frac{3k-3}{4}} + \FL{\frac{3n-k}{4}}\bigg)\\
&\ge\alpha + 1 + \FL{\frac{3n-6}{4}},
\end{align*}
since when $k=1$, 
 $\FL{\frac{3k-3}{4}} + \FL{\frac{3n-k}{4}}=1+\FL{\frac{3n-5}{4}}$, and
for $k\ge2$,
$$
\FL{\frac{3k-3}{4}} + \FL{\frac{3n-k}{4}}
\ge \FL{\frac{3n+2k-6}{4}}\ge 1+\FL{\frac{3n-6}{4}}.
$$
Thus the result holds for $L_{2\alpha+1}$.                    
Suppose the result holds for $L_{2\alpha+1}$
for a given $\alpha\ge1$. Then again by Lemma \lem{UAPAtk} 
and equation \omyeqn{ellevdef} we have
\begin{align*}
L_{2\alpha+2} &= U_B(L_{2\alpha+1}) 
= \sum_{n\ge 2} \ell(2\alpha+1,n) U_B(P_B t^n),\\
    &= \sum_{n\ge2} \ell(2\alpha+1,n) P_A \sum_{k\ge1} b(n,k) t^k\\
    &= P_A\sum_{n\ge1} \sum_{k\ge2}\ell(2\alpha+1,k) b(k,n) t^n\\
    &= P_A \sum_{n\ge1} \ell(2\alpha+2,n) t^n,
\end{align*}
where $\ell(2\alpha+1,1)=0$. Here 
\begin{align*}
\nu_5(\ell(2\alpha+2,n))&\ge
\umin{2 \le k}
 \bigg(\nu_5(\ell(2\alpha+1,k))+\nu_5(b(k,n)\bigg)
\ge 
\umin{2 \le k}
 \bigg(\alpha + 1+ \FL{\frac{3k-6}{4}} + \FL{\frac{3n-k+2}{4}}\bigg)\\
&\ge
\umin{2 \le k}
 \bigg( \alpha + 1 + \FL{\frac{3n+2k-7}{4}} \bigg)
=\alpha + 1 + \FL{\frac{3n-3}{4}}.  
\end{align*}
Thus the result holds for $L_{2\alpha+2}$, and the result holds in
general by induction.
\end{proof}

\begin{cor}
\omylabel{cor:cf5congs}
For $\alpha\ge1$ and all $n\ge0$ we have
\begin{align}
c_f(5^{2\alpha}n+\lambda_{2\alpha}) &\equiv 0 \pmod{5^\alpha},
\omylabel{eq:cf5acong}\\
c_f(5^{2\alpha+1}n+\lambda_{2\alpha+1}) &\equiv 0 \pmod{5^{\alpha+1}}.
\omylabel{eq:cf5bcong}
\end{align}
\end{cor}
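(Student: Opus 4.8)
The plan is to read off \eqn{cf5acong} and \eqn{cf5bcong} directly from Theorem \thm{Lmain} together with Lemma \lem{Lalpha}. The key observation is that in the bounds $\nu_5(\ell(2\alpha,n)) \ge \alpha + \FL{\frac{3n-3}{4}}$ and $\nu_5(\ell(2\alpha+1,n)) \ge \alpha + 1 + \FL{\frac{3n-6}{4}}$ the floor terms are non-negative on the respective index ranges $n \ge 1$ and $n \ge 2$ (the minimum value $0$ being attained at $n = 1,2$ in the first case and at $n = 2,3$ in the second). Hence $\sum_{n\ge 1}\ell(2\alpha,n)\,t^n$ is a polynomial in $t$ all of whose coefficients lie in $5^{\alpha}\Z$, and $\sum_{n\ge 2}\ell(2\alpha+1,n)\,t^n$ is a polynomial in $t$ all of whose coefficients lie in $5^{\alpha+1}\Z$.

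Next I would record the elementary fact that the eta-quotients appearing here have integer coefficients: each finite product $J_k = (q^k;q^k)_\infty$ lies in $\Z[[q]]$, and each reciprocal $1/J_k = \sum_{m\ge 0} p(m)\,q^{km}$ lies in $\Z[[q]]$ as well, so from \eqn{PAPBABdefs} we get $P_A \in \Z[[q]]$ with constant term $1$, and $P_B \in q^{-1}\Z[[q]]$ with a simple pole at $q = 0$. Since $t = t(\tau) \in q\,\Z[[q]]$ by \eqn{deft}, the product $P_A\sum_{n\ge 1}\ell(2\alpha,n)\,t^n$ lies in $q\,\Z[[q]]$ with every coefficient divisible by $5^{\alpha}$, and the product $P_B\sum_{n\ge 2}\ell(2\alpha+1,n)\,t^n$ lies in $q\,\Z[[q]]$ with every coefficient divisible by $5^{\alpha+1}$. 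By Theorem \thm{Lmain} these two products are $L_{2\alpha}$ and $L_{2\alpha+1}$, so $L_{2\alpha}\equiv 0\pmod{5^{\alpha}}$ and $L_{2\alpha+1}\equiv 0\pmod{5^{\alpha+1}}$ coefficientwise as $q$-series.

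Finally I would invoke Lemma \lem{Lalpha}, which writes $L_{2\alpha} = \frac{J_5 J_2^2}{J_1^4}\sum_{n\ge 0}c_f(5^{2\alpha}n+\lambda_{2\alpha})\,q^n$ and $L_{2\alpha+1} = \frac{J_{10}^2 J_1}{J_5^4}\sum_{n\ge 0}c_f(5^{2\alpha+1}n+\lambda_{2\alpha+1})\,q^n$. Both prefactors lie in $\Z[[q]]$ and have constant term $1$, hence are units in $\Z[[q]]$ with inverses again in $\Z[[q]]$. Multiplying the congruence for $L_{2\alpha}$ (resp.\ $L_{2\alpha+1}$) by the corresponding inverse series therefore gives $\sum_{n\ge 0}c_f(5^{2\alpha}n+\lambda_{2\alpha})\,q^n \equiv 0\pmod{5^{\alpha}}$ and $\sum_{n\ge 0}c_f(5^{2\alpha+1}n+\lambda_{2\alpha+1})\,q^n \equiv 0\pmod{5^{\alpha+1}}$, which are \eqn{cf5acong} and \eqn{cf5bcong} upon comparing coefficients of $q^n$. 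There is no substantive obstacle here, since all the real content sits in Theorem \thm{Lmain}; the only points needing a moment's care are the bookkeeping that the ``reduction'' eta-quotients $\frac{J_5 J_2^2}{J_1^4}$ and $\frac{J_{10}^2 J_1}{J_5^4}$ are units over $\Z$---so that $5$-divisibility of the coefficients is preserved when passing across the identities---and that, although $P_B$ has a pole of order one at $q = 0$, its product with a $t$-polynomial starting at $t^2$ is still an honest power series, so the coefficientwise divisibility argument goes through unchanged.
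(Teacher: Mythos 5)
Your proposal is correct and follows the same route as the paper, which simply states that the congruences follow immediately from Lemma \ref{lem:Lalpha} and Theorem \ref{thm:Lmain}; your write-up supplies the routine bookkeeping (non-negativity of the floor terms on the relevant ranges, integrality of $P_A$, $P_B t^2$, and $t$, and the fact that the prefactors $\frac{J_5J_2^2}{J_1^4}$ and $\frac{J_{10}^2J_1}{J_5^4}$ are units in $\mathbb{Z}[[q]]$) that the paper leaves implicit.
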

\begin{proof}
The congruences follow immediately from Lemma \omylem{Lalpha}
and Theorem \omythm{Lmain}.
\end{proof}
In view of \eqn{afcfid} and Corollary \corol{cf5congs} we obtain
\omyeqn{rmod5}. This completes the proof of Theorem \omythm{mainthm}.

\section{Further results}                                  
\omylabel{sec:further}
The methods of this paper can be extended to study congruences mod powers of $7$ for both
the rank and crank parity functions. We describe some of these results, which we will prove
in a subsequent paper \cite{Ch-Ch-Ga20}\omycite{Ch-Ch-Ga20}. Analogous to \omyeqn{af5id}
we find that
\beq
\sum_{n=0}^{\infty}(a_f(n/7)-a_f(7n-2))q^n
=\frac{J_7^3}{J_2^2}\left(\frac{J_1^3J_7^3}{J_2^3J_{14}^3}+6q^2\frac{J_{14}^4J_1^4}{J_2^4J_7^4}\right),
\omylabel{eq:af7id}
\eeq
which leads to the following
\begin{theorem}
\omylabel{thm:rankthm7}
For all $\alpha\ge3$ and all $n\ge 0$ we have
\beq
a_f(7^{\alpha}n + \delta_\alpha)
- a_f(7^{\alpha-2}n + \delta_{\alpha-2})
\equiv 0 \pmod{7^{ \FL{\tfrac{1}{2}(\alpha-1)}}},
\omylabel{eq:rmod7}
\eeq
  where $\delta_\alpha$ satisfies $0 < \delta_\alpha < 7^\alpha$ and
$24\delta_\alpha\equiv1\pmod{7^\alpha}$.
\end{theorem}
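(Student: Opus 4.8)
The plan is to reproduce, with the prime $5$ replaced by $7$ and the level $10$ replaced by $14$, the entire chain of arguments of Section \sect{rankparity5}. First I would prove the generating-function identity \eqn{af7id}, the $7$-analogue of Theorem \thm{af5thm}: starting from Watson's representation $f(q)=\tfrac{2}{(q;q)_\infty}\sum_{n}(-1)^n q^{n(3n+1)/2}/(1+q^n)$, split the sum according to $n\bmod 7$, apply a $7$-dissection of the Appell-type sum $\sum_{n}(-1)^n q^{n(3n+1)/2}/(1+q^{7n})$ (the $7$-analogue of Mao's mod-$10$ dissections) together with the known $7$-dissection of $J_1$, isolate the component producing $a_f(7n-2)$, and verify the resulting identity among generalized eta-products on $\Gamma_1(14)$ by the valence-formula algorithm of Section \subsect{geneta} (the \texttt{thetaids} package).

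Next I would exhibit a Hauptmodul $t_7=t_7(\tau)$ for $\Gamma_0(14)$, an eta-quotient playing the role of $t$ in \eqn{deft}, and establish the degree-$7$ modular equation $t_7(\tau)^7+\sum_{j=0}^{6}\sigma^{(7)}_j(7\tau)\,t_7(\tau)^j=0$ with each $\sigma^{(7)}_j$ an explicit polynomial in $t_7$; this is the $7$-analogue of Theorem \thm{modeq}, proved by dividing through by $t_7^7$, tabulating fan widths and orders of $t_7(\tau)$ and $t_7(7\tau)$ at the cusps of $\Gamma_0(98)$, and invoking Theorem \thm{valcor}. From it the Fundamental Lemma $U_7(u\,t_7^{j})=-\sum_{l=0}^{6}\sigma^{(7)}_l(\tau)\,U_7(u\,t_7^{j+l-7})$ follows exactly as in Lemma \lem{fun5}, and the two propagation lemmas --- the $t_7$-order bound of Lemma \lem{ordtpk} and the $7$-adic coefficient bound of Lemma \lem{U5utkcofs} --- carry over with the integer $4$ in the relevant floor and ceiling expressions replaced by $6$, once the exact $7$-adic valuations of the coefficients of the $\sigma^{(7)}_j$ have been recorded.

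Then, writing $c_f^{(7)}(n):=a_f(n/7)-a_f(7n-2)$ and introducing eta-quotients $P_A^{(7)},P_B^{(7)},A^{(7)},B^{(7)}$ analogous to those in \eqn{PAPBABdefs} --- with $P_A^{(7)}$ essentially the right-hand side of \eqn{af7id} after removal of a suitable eta-quotient prefactor, and $A^{(7)}B^{(7)}$ coming from the $7$-dissection of $J_1$ --- together with $U_{A^{(7)}}(\cdot):=U_7(A^{(7)}\,\cdot)$ and $U_{B^{(7)}}(\cdot):=U_7(B^{(7)}\,\cdot)$, I would compute the $14$ initial identities for $U_{A^{(7)}}(P_A^{(7)}t_7^{k})$ and $U_{B^{(7)}}(P_B^{(7)}t_7^{k})$, $-6\le k\le 0$, with the $U_p$ eta-product algorithm of Section \subsect{Upop} (the \texttt{ETA} package), and feed them through the propagation lemmas to obtain discrete arrays $a,b$ as in Lemma \lem{UAPAtk}. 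Finally, defining $L^{(7)}_0:=P_A^{(7)}$, $L^{(7)}_{2\alpha+1}:=U_{A^{(7)}}(L^{(7)}_{2\alpha})$ and $L^{(7)}_{2\alpha+2}:=U_{B^{(7)}}(L^{(7)}_{2\alpha+1})$, identifying $L^{(7)}_\alpha$ with a fixed eta-quotient times the generating function of $c_f^{(7)}(7^{\alpha}n+\mu_\alpha)$ for appropriate $\mu_\alpha$ as in Lemma \lem{Lalpha}, running the induction of Theorem \thm{Lmain} to bound $\nu_7$ of the coefficient arrays of $L^{(7)}_{2\alpha}$ and $L^{(7)}_{2\alpha+1}$, and combining with the analogue of \eqn{afcfid} relating $a_f$ at level $7^\alpha$ to $c_f^{(7)}$ at level $7^{\alpha-1}$, one deduces \eqn{rmod7}.

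I expect the decisive step to be the $7$-adic bookkeeping: finding the correct constants in the analogues of Lemmas \lem{ordtpk} and \lem{U5utkcofs}, proving a sharp valuation estimate of the form $\nu_7\big(s^{(7)}(j,l)\big)\ge\FL{(a\,l+b\,j)/6}$ for the coefficients $s^{(7)}(j,l)$ of $\sigma^{(7)}_j$ with the right integers $a,b$, and checking that the resulting floor-function inequalities close at each inductive step for the degree-$7$ modular equation, whose $\sigma^{(7)}_j$ have more terms and larger coefficients than the $\sigma_j$ of Theorem \thm{modeq}. The shift from $\FL{\tfrac12\alpha}$ in Theorem \thm{mainthm} to $\FL{\tfrac12(\alpha-1)}$ in \eqn{rmod7} reflects the fact that, unlike $U_A$ in the $5$-adic case, the odd step $L^{(7)}_{2\alpha}\mapsto L^{(7)}_{2\alpha+1}$ will not contribute an extra power of $7$, so pinning down which of the two operators in each cycle carries the power of $7$ is the subtle point. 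The remaining ingredients --- the cusp data for $\Gamma_0(14)$, $\Gamma_0(98)$ and $\Gamma_1(14)$, and the finitely many $U_7$ eta-product identities --- are routine given the \texttt{ETA} and \texttt{thetaids} packages, and will be carried out in full in the companion paper \cite{Ch-Ch-Ga20}.
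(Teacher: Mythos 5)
The paper does not actually prove Theorem \thm{rankthm7}: Section \sect{further} only states it, records the generating-function identity \eqn{af7id} as the analogue of Theorem \thm{af5thm}, and defers the proof to the companion paper \cite{Ch-Ch-Ga20}. Your outline follows exactly the route the authors indicate --- transplant the Section \sect{rankparity5} machinery from level $10$ to level $14$ --- so as a strategy it is the intended one. But as written it is a program rather than a proof. Every decisive ingredient is asserted, not established: the existence of a Hauptmodul $t_7$ for $\Gamma_0(14)$ satisfying a degree-$7$ modular equation whose coefficients $\sigma^{(7)}_j$ have the requisite $7$-adic valuations; the fourteen initial $U_7$ identities; and, above all, the closing of the floor-function inequalities in the induction. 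You concede yourself that you do not know the constants $a,b$ in the valuation estimate $\nu_7(s^{(7)}(j,l))\ge\FL{(al+bj)/6}$, and without them there is no way to verify that the analogues of Lemmas \lem{ordtpk} and \lem{U5utkcofs} propagate, nor that the induction in the analogue of Theorem \thm{Lmain} gains a power of $7$ at the claimed rate. The entire content of the theorem lives in precisely this bookkeeping.

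A related point: you infer from the exponent $\FL{\tfrac12(\alpha-1)}$ (versus $\FL{\tfrac12\alpha}$ in Theorem \thm{mainthm}) that only one of the two operators per cycle contributes a power of $7$, and you flag identifying which one as ``the subtle point.'' That is reverse-engineering the answer from the statement rather than deriving it; in the mod-$5$ case the asymmetry between $U_A$ and $U_B$ emerges only from the explicit Group I and Group II initial values in Lemma \lem{initUAPAtk} together with the exact valuations of the $\sigma_j$, and nothing in your proposal supplies the corresponding data for $7$. So the proposal correctly identifies the architecture of the argument but leaves open exactly the computations on which the truth of \eqn{rmod7} --- and in particular the precise exponent --- depends.
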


It turns out that for the crank parity function congruences mod powers of $7$ 
are more difficult.
Define the \textit{crank parity function}
\beq
\beta(n) = M_e(n) - M_o(n),
\omylabel{eq:betadef}
\eeq
for all $n\ge0$.  The following is our analog of Choi, Kang and Lovejoy's Theorem \omythm{crankthm}.
\begin{theorem}
\omylabel{thm:crankthm7}
For each $\newaa\ge1$ there is an integral constant $K_\newaa$ such that
\beq
\beta(49n - 2) \equiv K_\newaa\, \beta(n) \pmod{7^{\newaa}},\qquad
\mbox{if $24n\equiv 1 \pmod{7^{\newaa}}$}.
\omylabel{eq:betamod7}
\eeq
\end{theorem}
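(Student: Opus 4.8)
The plan is to follow, for the crank parity function $\beta$ modulo powers of $7$, the same template used in Section~\sect{rankparity5} for the rank parity function modulo powers of $5$; the details will appear in \cite{Ch-Ch-Ga20}. The starting data are the well-known generating function $\sum_{n\ge0}\beta(n)q^n = J_1^3/J_2^2$ and the arithmetic observation that $24\cdot(-2)\equiv1\pmod{49}$, which shows (via $24(49n-2)-1 = 49(24n-1)$) that $n\mapsto49n-2$ maps $\{n:24n\equiv1\pmod{7^\alpha}\}$ bijectively onto $\{m:24m\equiv1\pmod{7^{\alpha+2}}\}$. The first step is to prove, in analogy with Theorem~\thm{af5thm} and equation~\eqn{af7id}, a generating-function identity expressing $\sum_{n\ge0}\beta(49n-2)q^n$ — equivalently the relevant $7$-dissection component of $J_1^3/J_2^2$ — as an explicit linear combination of generalized eta-products, and to verify it with the valence-formula algorithm of Section~\subsect{geneta} on a suitable $\Gamma_1(N)$.

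The second step is the level-$7$ analogue of the modular equation of Theorem~\thm{modeq}, and this is where the extra difficulty enters: the analogue of the Hauptmodul $t$ would live on $\Gamma_0(14)$, whose modular curve $X_0(14)$ has genus one, so no single Hauptmodul is available. Instead one fixes two generators, say a function $t$ together with a second function $w$, and establishes the polynomial relations linking $t(\tau),w(\tau)$ to $t(7\tau),w(7\tau)$ by the usual method (Theorems~\thm{etamodthm}, \thm{ordthm}, \thm{chualang}, Lemma~\lem{fanw} and Theorem~\thm{valcor}). From these one derives the analogue of the Fundamental Lemma~\lem{fun5}, expressing $U_7(u\cdot\mu)$, for $\mu$ a monomial in $t$ and $w$, as a weighted combination of the $U_7(u\cdot\mu')$ over lower monomials $\mu'$. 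One then introduces eta-quotient weights $A,B$ and operators $U_A(f)=U_7(Af)$, $U_B(f)=U_7(Bf)$, chosen so that $U_A$ and $U_B$ interchange two finitely generated modules $P_A\mathcal M$ and $P_B\mathcal M$, where $\mathcal M$ is the $\mathbb Z$-span of the monomials in $t,w$, and computes the finitely many initial values $U_A(P_A\mu)$, $U_B(P_B\mu)$ by the $U_p$-ETA algorithm of Section~\subsect{Upop}, as in Lemma~\lem{initUAPAtk}.

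The third step is to prove the analogues of Lemmas~\lem{ordtpk}, \lem{U5utkcofs}, \lem{UAPAtk} and Theorem~\thm{Lmain}, tracking the $7$-adic valuations of the coefficients of $L_{2\alpha}$ and $L_{2\alpha+1}$ in the monomial basis. The weaker conclusion of Theorem~\thm{crankthm7} originates precisely here: unlike the mod-$5$ case, the bound produced by the valuation lemma is not strong enough to force $7^\alpha\mid L_{2\alpha}$, because the transfer operator $U_BU_A$ has a distinguished one-dimensional component — the one along $P_A$ itself — whose coefficient is a $7$-adic unit and therefore fails to acquire an extra power of $7$ at each step, while the valuations of all the remaining coefficients still grow linearly in $\alpha$. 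Consequently, in place of $L_{2\alpha}\equiv0$ one obtains $L_{2\alpha+2}\equiv K_\alpha L_{2\alpha}\pmod{7^\alpha}$ (and similarly in the odd case) for a suitable integer $K_\alpha$ (a $7$-adic truncation of this unit to sufficient precision). Passing back through the generating-function identity of the first step, together with the identification of $L_{2\alpha}$, $L_{2\alpha+1}$ with generating functions for $\beta$ along the relevant progressions (the analogue of Lemma~\lem{Lalpha}), then yields $\beta(49n-2)\equiv K_\alpha\beta(n)\pmod{7^\alpha}$ whenever $24n\equiv1\pmod{7^\alpha}$.

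The main obstacle is exactly the positive genus of $X_0(14)$: one must run the entire valuation bookkeeping with two generators $t$ and $w$ rather than a single Hauptmodul, and must identify precisely which component fails to decay — and with exactly what residual $7$-adic valuation — so that the surviving congruence holds modulo $7^\alpha$ and not something weaker. Establishing the two-variable modular equation on the relevant groups, and verifying the required initial $U_7$ eta-product identities, also demands considerably more extensive machine computation than in the mod-$5$ case.
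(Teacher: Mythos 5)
You should first note that the paper does not actually prove Theorem \thm{crankthm7}: it appears in Section \sect{further} among results ``which we will prove in a subsequent paper \cite{Ch-Ch-Ga20}'', so there is no proof in this paper to compare yours against. Measured against the paper's general method, your sketch is a sensible reconstruction: the generating function $J_1^3/J_2^2$ for $\beta$ is correct, the arithmetic $24(49n-2)-1=49(24n-1)$ is right, and you correctly identify both the structural obstacle (the curve $X_0(14)$ has genus one, so the single-Hauptmodul bookkeeping of Sections \subsect{etaprods}--\subsect{Upop} must be replaced by a two-generator module) and the structural reason the conclusion is $\beta(49n-2)\equiv K_\newaa\,\beta(n)$ rather than $\beta(49n-2)\equiv0$, namely a component of the transfer operator whose coefficient is a $7$-adic unit. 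This is consistent with the authors' own remark that the crank parity congruences mod powers of $7$ are ``more difficult'' and with the Atkin--O'Brien mod-$13$ phenomenon they cite.

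That said, what you have written is a research plan, not a proof, and the gaps sit exactly at the load-bearing points. You never exhibit the generating-function identity for $\sum_n\beta(49n-2)q^n$, the two-variable modular equation relating $t(\tau),w(\tau)$ to $t(7\tau),w(7\tau)$, or the initial $U_7$ identities, all of which require nontrivial machine verification before any induction can start. More seriously, the claim that drives the whole theorem --- that $U_BU_A$ has a distinguished one-dimensional unit component along $P_A$ while the $7$-adic valuations of all remaining components grow linearly in $\newaa$, and that this unit component stabilizes to a well-defined integer $K_\newaa$ modulo $7^{\newaa}$ --- is asserted without any argument; it is presumably the main content of \cite{Ch-Ch-Ga20}. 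Until that valuation statement is established, the congruence \eqn{betamod7} does not follow, so the proposal cannot be accepted as a proof of the statement.
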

This gives a weak refinement of Ramanujan's partition congruence 
modulo powers of $7$:
$$
p(n) \equiv 0 \pmod{7^\newaa},\qquad
\mbox{if $24n\equiv 1 \pmod{7^{\FL{\tfrac{a+2}{2}}}}$}.
$$
This was also proved by Watson \cite{Wa38}\omycite{Wa38}. Atkin and O'Brien 
\cite{At-OB}\omycite{At-OB} obtained congruences mod powers of $13$ for the 
partition function similar to \omyeqn{betamod7}.







\begin{thebibliography}{10}
\bibitem{Ah-Du19}
Scott Ahlgren and Alexander Dunn, \emph{Maass forms and the mock theta function
  {$f(q)$}}, Math. Ann. \textbf{374} (2019), no.~3-4, 1681--1718. \MR{3985121}
\bibitem{An1984mem}
George~E. Andrews, \emph{Generalized {F}robenius partitions}, Mem. Amer. Math.
  Soc. \textbf{49} (1984), no.~301, iv+44. \MR{743546}
\bibitem{An66a}
George~E. Andrews, \emph{On the theorems of {W}atson and {D}ragonette for
  {R}amanujan's mock theta functions}, Amer. J. Math. \textbf{88} (1966),
  454--490. \MR{200258}
\bibitem{An-Ga88}
George~E. Andrews and F.~G. Garvan, \emph{Dyson's crank of a partition}, Bull.
  Amer. Math. Soc. (N.S.) \textbf{18} (1988), no.~2, 167--171. \MR{929094}
\bibitem{At-Le70}
A.~O.~L. Atkin and J.~Lehner, \emph{Hecke operators on {$\Gamma _{0}(m)$}},
  Math. Ann. \textbf{185} (1970), 134--160. \MR{0268123}
\bibitem{At-OB}
A.~O.~L. Atkin and J.~N. O'Brien, \emph{Some properties of {$p(n)$} and
  {$c(n)$} modulo powers of {$13$}}, Trans. Amer. Math. Soc. \textbf{126}
  (1967), 442--459. \MR{214540}
\bibitem{At-SwD}
A.~O.~L. Atkin and P.~Swinnerton-Dyer, \emph{Some properties of partitions},
  Proc. London Math. Soc. (3) \textbf{4} (1954), 84--106. \MR{0060535}
\bibitem{Be-RNIII}
Bruce~C. Berndt, \emph{Ramanujan's notebooks. {P}art {III}}, Springer-Verlag,
  New York, 1991. \MR{1117903}
\bibitem{Bi89}
Anthony J.~F. Biagioli, \emph{A proof of some identities of {R}amanujan using
  modular forms}, Glasgow Math. J. \textbf{31} (1989), no.~3, 271--295.
  \MR{1021804}
\bibitem{Br-On06}
Kathrin Bringmann and Ken Ono, \emph{The {$f(q)$} mock theta function
  conjecture and partition ranks}, Invent. Math. \textbf{165} (2006), no.~2,
  243--266. \MR{2231957}
\bibitem{Ch-Ch-Ga20}
Dandan Chen, Rong Chen, and Frank Garvan, \emph{Congruences modulo powers of
  $7$ for the rank and crank parity functions},  (2020), in preparation.
\bibitem{Ch-Ka-Lo}
Dohoon Choi, Soon-Yi Kang, and Jeremy Lovejoy, \emph{Partitions weighted by the
  parity of the crank}, J. Combin. Theory Ser. A \textbf{116} (2009), no.~5,
  1034--1046. \MR{2522417}
\bibitem{Ch-Ko-Pa}
Bumkyu Cho, Ja~Kyung Koo, and Yoon~Kyung Park, \emph{Arithmetic of the
  {R}amanujan-{G}\"{o}llnitz-{G}ordon continued fraction}, J. Number Theory
  \textbf{129} (2009), no.~4, 922--947. \MR{2499414}
\bibitem{Ch-La04}
Kok~Seng Chua and Mong~Lung Lang, \emph{Congruence subgroups associated to the
  monster}, Experiment. Math. \textbf{13} (2004), no.~3, 343--360. \MR{2103332}
\bibitem{Dr52}
Leila~A. Dragonette, \emph{Some asymptotic formulae for the mock theta series
  of {R}amanujan}, Trans. Amer. Math. Soc. \textbf{72} (1952), 474--500.
  \MR{49927}
\bibitem{Dy44}
F.~J. Dyson, \emph{Some guesses in the theory of partitions}, Eureka (1944),
  no.~8, 10--15. \MR{3077150}
\bibitem{Fr-Ga19}
Jie Frye and Frank Garvan, \emph{Automatic proof of theta-function identities},
  Elliptic integrals, elliptic functions and modular forms in quantum field
  theory, Texts Monogr. Symbol. Comput., Springer, Cham, 2019, pp.~195--258.
  \MR{3889559}

\bibitem{Ga88b}
F.~G. Garvan, \emph{New combinatorial interpretations of {R}amanujan's
  partition congruences mod {$5,7$} and {$11$}}, Trans. Amer. Math. Soc.
  \textbf{305} (1988), no.~1, 47--77. \MR{920146}
\bibitem{GoHu81}
B.~Gordon and K.~Hughes, \emph{Ramanujan congruences for {$q(n)$}}, Analytic
  number theory ({P}hiladelphia, {P}a., 1980), Lecture Notes in Math., vol.
  899, Springer, Berlin-New York, 1981, pp.~333--359. \MR{654539}
\bibitem{Hi-Hu81}
Michael~D. Hirschhorn and David~C. Hunt, \emph{A simple proof of the
  {R}amanujan conjecture for powers of {$5$}}, J. Reine Angew. Math.
  \textbf{326} (1981), 1--17. \MR{622342}
\bibitem{Li75}
G\'{e}rard Ligozat, \emph{Courbes modulaires de genre {$1$}}, Soci\'{e}t\'{e}
  Math\'{e}matique de France, Paris, 1975, Bull. Soc. Math. France, M\'{e}m.
  43, Suppl\'{e}ment au Bull. Soc. Math. France Tome 103, no. 3. \MR{0417060}
\bibitem{Ma09}
Robert~S. Maier, \emph{On rationally parametrized modular equations}, J.
  Ramanujan Math. Soc. \textbf{24} (2009), no.~1, 1--73. \MR{2514149}
\bibitem{Mao13}
Renrong Mao, \emph{Ranks of partitions modulo 10}, J. Number Theory
  \textbf{133} (2013), no.~11, 3678--3702. \MR{3084295}
\bibitem{Ne59}
Morris Newman, \emph{Construction and application of a class of modular
  functions. {II}}, Proc. London Math. Soc. (3) \textbf{9} (1959), 373--387.
  \MR{0107629}
\bibitem{Pa-Ra12}
Peter Paule and Cristian-Silviu Radu, \emph{The {A}ndrews-{S}ellers family of
  partition congruences}, Adv. Math. \textbf{230} (2012), no.~3, 819--838.
  \MR{2921161}
\bibitem{Ra}
Robert~A. Rankin, \emph{Modular forms and functions}, Cambridge University
  Press, Cambridge, 1977. \MR{0498390}
\bibitem{Ro94}
Sinai Robins, \emph{Generalized {D}edekind {$\eta$}-products}, The {R}ademacher
  legacy to mathematics ({U}niversity {P}ark, {PA}, 1992), Contemp. Math., vol.
  166, Amer. Math. Soc., Providence, RI, 1994, pp.~119--128. \MR{1284055}
\bibitem{Se1994}
James Sellers, \emph{Congruences involving {$F$}-partition functions},
  Internat. J. Math. Math. Sci. \textbf{17} (1994), no.~1, 187--188.
  \MR{1255240}
\bibitem{Wa36a}
G.~N. Watson, \emph{The {F}inal {P}roblem : {A}n {A}ccount of the {M}ock
  {T}heta {F}unctions}, J. London Math. Soc. \textbf{11} (1936), no.~1, 55--80.
  \MR{1573993}
\bibitem{Wa38}
G.~N. Watson, \emph{Ramanujans {V}ermutung \"{u}ber {Z}erf\"{a}llungszahlen},
  J. Reine Angew. Math. \textbf{179} (1938), 97--128. \MR{1581588}
\end{thebibliography}
\end{document}